\theoremstyle{plain}
\newtheorem{thm}{Theorem}[section]
\newtheorem*{thm*}{Theorem}
\newtheorem{lem}[thm]{Lemma}
\newtheorem{prop}[thm]{Proposition}
\newtheorem{cor}[thm]{Corollary}
\theoremstyle{definition}
\theoremstyle{remark}
\newtheorem{qst}[thm]{Question}
\numberwithin{equation}{section}
\numberwithin{figure}{section}
\newcommand\st{\; | \;}
\newcommand\cut{\cap}
\newcommand\union{\cup}
\newcommand\Union{\bigcup}
\newcommand\cP{{\mathcal P}}
\newcommand\ncP{{\mathcal P}^\star}
\newcommand\cA{{\mathcal A}}
\newcommand\cB{{\mathcal B}}
\newcommand\cF{{\mathcal F}}
\newcommand\cT{{\mathcal T}}
\newcommand\cS{{\mathcal S}}
\newcommand\cC{{\mathcal C}}
\newcommand\cD{{\mathcal D}}
\newcommand\NN{{\mathbb N}}
\newcommand\RR{{\mathbb R}}
\newcommand\SetOf[2]{\left\{#1\vphantom{#2}\colon\,\vphantom{#1}#2\right\}}
\newcommand\smallSetOf[2]{\{#1\colon\,#2\}}
\newcommand\arrow{\vec}
\newcommand\conv{\operatorname{conv}}
\newcommand\relint{\operatorname{relint}}
\newcommand\card[1]{\left|#1\right|}
\providecommand{\nonnegRR}{{\RR_{\ge0}}}
\providecommand{\subdiv}{{\Sigma}}
\providecommand{\envelope}[2]{{{\mathscr{E}}_{#1}(#2)}}
\providecommand{\lift}[2]{{{\mathscr{L}}_{#1}(#2)}}
\providecommand{\tightspan}[2]{{{\mathscr{T}}_{#1}(#2)}}
\providecommand{\subdivision}[2]{{\subdiv_{#1}(#2)}}
\providecommand{\Asubdivision}[2]{{\subdiv_{#1}(#2)}}
\providecommand{\Hypersimplex}[2]{{\Delta(#1,#2)}}
\providecommand{\scp}[2]{\langle{#1},{#2}\rangle}
\renewcommand{\phi}{\varphi}
\DeclareMathOperator{\SP}{SP}
\providecommand{\divTS}[1]{{\hat T}_{#1}}
\providecommand{\divTSm}[1]{{T}_{#1}}
\providecommand{\divP}[1]{{\hat P}_{#1}}
\providecommand{\divPm}[1]{{P}_{#1}}
\begin{document}

\title{Trees, Tight-Spans and Point Configurations}
\author{Sven Herrmann \and Vincent Moulton}
\date{\today}
\address{School of Computing Sciences, University of East Anglia, Norwich, NR4 7TJ, UK}

\thanks{The first author was supported by a fellowship within the Postdoc"=Programme of the German Academic Exchange Service (DAAD) and thanks the UEA School of Computing Sciences
for hosting him during the writing of this paper.}

\begin{abstract}
Tight-spans of metrics were first introduced by Isbell
in 1964 and rediscovered and studied by others, most 
notably by Dress, who gave them this name. Subsequently, it was
found that tight-spans could be defined for
more general maps, such as directed metrics and 
distances, and more recently for diversities. 
In this paper, we show that all of these tight-spans 
as well as some related constructions can 
be defined in terms of point configurations. This 
provides a useful way in which to study these 
objects in a unified and systematic way. We also 
show that by using point configurations we can recover 
results concerning one-dimensional tight-spans
for all of the maps we consider, 
as well as extend these and other results to 
more general maps such as symmetric and unsymmetric maps.
\end{abstract}

\keywords{tight-span, polytopal subdivision, metric, diversity, point configuration, injective hull}

\maketitle

\section{Introduction}
Let $V$ be a real vector space with
standard scalar product $\scp \cdot \cdot$ with respect
to some fixed basis $B$ (i.e., $\scp vw=\sum_{b\in B} \lambda_b \mu_b$ if $v=\sum_{b\in B}\lambda_b b, w=\sum_{b\in B}\mu_b b$). 
A \emph{point configuration} $\cA$
in $V$ is a finite subset of $V$; for technical reasons
we shall assume that the affine hull of any 
such configuration has codimension $1$.
Given a function $w: \cA \to \RR$, 
we define the \emph{envelope} of~$\cA$ with respect to $w$
to be the polyhedron
\[
\envelope{w}{\cA} \ = \ \SetOf{x\in V}
{\scp {a} x \ge -w \text{ for all } a\in\cA},
\]
and the \emph{tight-span} $\tightspan w \cA$ of~$\cA$
to be the union
of the bounded faces of $\envelope{w}{\cA}$.
Tight-spans of point configurations
were introduced in \cite{MR2502496}
for vertex sets of polytopes, 
as a tool for studying subdivisions of polytopes.
Even so, they first appeared several years ago
in a somewhat different guise. 

More specifically, let $X$ be a finite set,
$V = \RR^X$ be the vector space
of functions $X\to \RR$ and, for $x\in X$, $e_x$ denote the elementary
function assigning $1$ to $x$ and $0$ to all other $y\in X$.
In addition, let $D$ be a metric on $X$, that is, a symmetric map
on $X \times X$ that vanishes on the
diagonal and satisfies the triangle inequality. Then,
as first remarked by Sturmfels and  Yu~\cite{MR2097310},
by setting $w(e_x+e_y)=-D(x,y)$,
the tight-span $\tightspan w {\bar\cA(X)}$ of
$\bar \cA(X)=\smallSetOf{e_x+e_y}{x,y\in X, x\not=y}$
is nothing other than the \emph{injective hull} of $D$
that was first introduced by Isbell~\cite{MR82949}
and subsequently rediscovered by Dress~\cite{MR753872} (who
called it the \emph{tight-span of $D$}),
as well as Chrobak and Larmore~\cite{MR1105939,MR1258238}.

Since its discovery by Isbell, the tight-span of a metric 
on a finite set has been
intensively studied (see, e.g., \cite{DHKMS-book,MR1379369}
for overviews) and various related constructions have been introduced.
These include tight-spans of {\em directed metrics} and
{\em directed distances}~\cite{HiraiKoichi}, tight-spans of 
{\em polytopes} \cite{MR2502496} and 
more recently the tight-span of a 
so-called {\em diversity}~\cite{BT10}.
Note that, in contrast to the tight-span of a metric,
it is not known whether or not 
all of these constructions are necessarily
injective hulls (i.e., injective objects
in some appropriate category), but for 
simplicity we shall still refer to them as tight-spans.
Here we shall show that, as with metrics on finite sets, tight-spans
of directed distances, diversities and some related maps can 
all also be described in terms of
point configurations, providing a useful way
to systematically study these objects.

More specifically, after presenting some preliminary results
concerning point configurations 
in Sections~\ref{sec:point-configurations} and~\ref{sec:splits},
in Section~\ref{sec:symmetric-functions}
we shall show that the tight-span of a distance on $X$
can be defined in terms of the configuration
$\cA(X)=\bar \cA(X)\cup \smallSetOf{2 e_x}{x\in X}
=\smallSetOf{e_x+e_y}{x,y\in X}$  
(Proposition~\ref{prop:ts-dissimilarity}). Also, for $Y$
a finite set with $X\cap Y=\emptyset$, let
$\bar \cB(X,Y)\subseteq \RR^{X\cup Y}$ be
the configuration of all points $e_x+e_y$ with $x\in X$,
$y\in Y$ and $\cB(X,Y)=\bar \cB(X,Y)\cup \smallSetOf{2 e_x}{x\in X\cup 
Y}$. We show that the tight-span of a directed metric (distance) can
be defined in terms of  $\bar \cB(X)=\bar \cB(X,Y)$ or $\cB(X)=\cB(X,Y)$,
where we consider $Y$ as a disjoint copy 
of $X$ (Proposition~\ref{prop:ts-directed}). Using
these point configurations, we will also extend this
analysis to include arbitrary symmetric and even unsymmetric maps
(Section~\ref{sec:non-symmetric}).

In Sections~\ref{sec:div-distances} and 
\ref{sec:div-ts} we shall consider 
tight-spans of diversities, which were recently introduced in 
\cite{BT10}. Using a relationship that 
we shall derive between metrics and diversities, in 
Section~\ref{sec:div-ts}
we show that the tight-span of a diversity on $X$ 
can be expressed in terms of the point configuration
$\cC(X)=\smallSetOf{\sum_{i\in A} e_i}{A\in\cP(X)}$ (the vertices of a cube).
Intriguingly, we also show that a
strongly related object can also be associated to a diversity on $X$
by considering the point configuration 
$\cA(\cP(X)\setminus\{\emptyset\})$ and that,
for a special class of diversities (split system diversities)
this object and the tight-span
are in fact the same (Theorem~\ref{thm:tight-span-equal}).

In addition to providing some new insights 
on tight-spans using point configurations, we shall 
also focus on one-dimensional tight-spans. 
These are important since, for
example, they provide ways to generate phylogenetic trees
and networks (see, e.g., \cite{DHKMS,MR2232989}).
To see why this is the case, note that a one-dimensional
tight-span associated to a point configuration 
$\cA$ and weight function $w$ 
can also be regarded
as a graph, with vertex set equal to that of 
$\envelope{w}{\cA}$ and edge set
consisting of precisely those pairs of vertices
that both lie in a one-dimensional face of
$\envelope{w}{\cA}$. Since the union of bounded
faces of an unbounded polyhedron is contractible
(see, e.g., \cite[Lemma~4.5]{MR2233884}) it follows that
in this case the tight-span is, in fact, a tree. 

The archetypal characterisation
for one-dimensional tight-spans 
was first observed by Dress for metrics \cite{MR753872}:

\begin{thm}[Tree Metric Theorem]
The tight-span of a metric $D$ on a finite set $X$
is a tree if and only if $D$ satisfies
$$
D(x,y)+D(u,v) \le \max\{D(x,u)+D(y,v), D(x,v)+D(y,u)\}
$$
for any $x,y,u,v \in X$.
\end{thm}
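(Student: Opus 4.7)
The plan is to prove both directions by reducing the question to four-point sub-configurations of $\bar\cA(X)$, exploiting the polyhedral description of the tight-span provided by the envelope $\envelope{w}{\bar\cA(X)}$ with $w(e_x+e_y)=-D(x,y)$.

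For the \emph{only if} direction, I would first establish a restriction principle: for every subset $Y\subseteq X$, the tight-span of $D|_{Y\times Y}$ embeds naturally into $\tightspan{w}{\bar\cA(X)}$, via the projection $\RR^X\to\RR^Y$ coming from the inclusion $\bar\cA(Y)\subseteq\bar\cA(X)$. Thus if $\tightspan{w}{\bar\cA(X)}$ is a tree, then so is the tight-span of the restricted metric on any four points $\{x,y,u,v\}$. Since the tight-span of a four-point metric can be computed directly from the envelope in $\RR^4$, a case analysis of the possible one-dimensional shapes (a star or a quartet caterpillar tree) shows that two of the three sums $D(x,y)+D(u,v)$, $D(x,u)+D(y,v)$, $D(x,v)+D(y,u)$ must coincide and dominate the third, which is the four-point inequality.

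For the \emph{if} direction, I would follow the constructive route. Assuming the four-point condition, I would build an edge-weighted tree $T$ realising $D$ by induction on $\card X$: at each step the four-point condition identifies a pair of leaves forming a cherry (a pair whose least common ancestor has no other descendant), which can be collapsed into a single representative leaf while preserving the condition on the resulting $(\card X -1)$-point metric. After reconstructing $T$, I would show that the tight-span of $D$ coincides with the geometric realisation of $T$ by matching edges to one-dimensional bounded faces of $\envelope{w}{\bar\cA(X)}$; equivalently, one checks that every inequality $\scp{e_x+e_y}{\cdot}\ge D(x,y)$ that is tight on a bounded face corresponds to a pair $(x,y)$ whose tree-path passes through that face.

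The main obstacle will be the sufficiency direction, and more specifically the identification of the inductively constructed tree with the tight-span rather than merely realising $D$. There are many metrics whose tight-span strictly contains any realising tree, so the argument must use the tightness of the envelope (no slack can be added anywhere without violating the defining inequalities) together with the four-point condition to rule out bounded faces of dimension $\ge 2$. This is where the framework of point configurations developed in the earlier sections should pay off, allowing the higher-dimensional bounded faces to be classified combinatorially and then excluded by repeated application of the four-point condition.
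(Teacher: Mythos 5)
Your plan is the classical Dress--Buneman route (necessity by restriction to four-point subsets, sufficiency by reconstructing a realising tree), which is genuinely different from this paper's framework; but as written it contains two real gaps. The first is that your ``restriction principle'' is wrong as stated. The coordinate projection $\RR^X\to\RR^Y$ does map the envelope of $D$ into the envelope of $D|_{Y\times Y}$, but it does \emph{not} map tight-span into tight-span, because minimality is not preserved under restriction: for the path metric on $X=\{a,b,c\}$ with $D(a,b)=D(b,c)=1$ and $D(a,c)=2$, the function $f=(2,1,0)$ is minimal in $P_D$ (it is the Kuratowski image of $c$), yet its restriction $(2,1)$ to $Y=\{a,b\}$ is not minimal in $P_{D|_{Y\times Y}}$, since $(0,1)$ also lies in that polyhedron. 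The correct statement (Dress's tight extension theorem) is an embedding in the \emph{opposite} direction, $T_{D|_{Y\times Y}}\hookrightarrow T_D$, $f\mapsto f^*$ with $f^*(x)=\max_{y\in Y}\left(D(x,y)-f(y)\right)$, of which your projection is only a left inverse; this lemma is nontrivial and your proposal neither states it correctly nor indicates a proof. The second gap is in the sufficiency direction, and you name it yourself: after reconstructing a realising tree you must still show that the tight-span has no bounded faces of dimension at least two, and the proposal offers only a gesture towards ``classifying higher-dimensional faces combinatorially''. (Your supporting remark that ``there are many metrics whose tight-span strictly contains any realising tree'' is also confused: if $D$ is realised by a tree at all, its tight-span \emph{is} the minimal realising subtree; the metrics you have in mind admit no realising tree, so they are irrelevant to this direction.)

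For comparison: the paper never reproves the Tree Metric Theorem --- it quotes it from Dress --- but the machinery it develops is designed precisely so that statements of this type require neither of your hard steps. By Proposition~\ref{prop:ts-dissimilarity} the tight-span of $D$ is $\tightspan{w^D}{\cA(X)}$; by Proposition~\ref{prop:1-dim:comp} this is a tree if and only if $\subdivision{w^D}{\cA(X)}$ is a common refinement of compatible splits of $\cA(X)$; by Propositions~\ref{prop:splits-a} and~\ref{prop:partial-splits-comp} such split systems correspond exactly to compatible systems of partial splits of $X$; and Corollary~\ref{cor:split-decomposition} then decomposes $w^D$, hence $D$, as a positive combination of the corresponding split distances. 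This is how Theorem~\ref{thm:distance-tree} is obtained, and the four-point formulation follows because, for a metric, being a positive combination of compatible split metrics is equivalent to the four-point inequality by an elementary combinatorial argument. If you wish to keep your route instead, the two places named above --- the tight extension lemma, and the exclusion of two-dimensional bounded faces for tree metrics --- are exactly where the real work lies.
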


In this paper we will use point configurations to give 
various conditions for when tight-spans are
trees in more general settings 
(Theorems~\ref{thm:distance-tree}, \ref{thm:tight-span:tree} 
and \ref{thm:phylogenetic-tree}). This
allows us to recover and extend various
theorems connecting tight-spans and trees that arise
in the literature. We conclude the paper with
a discussion on some possible future directions.

\section{Tight-Spans and Splits of Point Configurations}\label{sec:point-configurations}

In this section, we will recall some definitions and results about tight-spans and splits of general point configurations as well as give some elementary properties of these that we will use later. For details, we refer the reader to~\cite{MR2502496} and \cite[Section~2]{Herrmann09b}. 
First we give a characterisation of the tight-span as the set of minimal elements of the envelope of a configuration if the configuration satisfies certain conditions. These conditions are fulfilled by all of the configurations that we will consider. When tight-spans (of metric spaces, but also of diversities) are considered and thought of in a non-polyhedral way, this characterisation is normally used as definition instead.
 
Now, as in the introduction, let $V$ be a finite"=dimensional vector space. An element of $v\in V$ is called \emph{positive} (with respect to a fixed basis $B$) if in its representation $v=\sum_{b\in B} \lambda^v_b b$ with respect to $B$ one has $\lambda^v_b\geq0$ for all $b\in B$. We have a partial order $\preceq$ on $V$ defined by $v\preceq v'$ if and only if $\lambda^v_b\leq\lambda^{v'}_b$ for all $b\in B$ (or, equivalently, $v'-v$ is positive). For a subset $A\subseteq V$ an element $a\in A$ is called \emph{minimal} if $a\preceq a'$ implies $a=a'$ for all $a'\in A$. The set $A$ is called \emph{bounded from below} if there exists some $M\in \RR$ such that $\lambda^v_b\geq M$ for all $b\in B$ and $v\in A$.

Let now $e\in \NN$ and $\phi: V\to \RR^e$ be a linear map and $b\in \RR^e$. In general, for a polyhedron $P=\smallSetOf{x\in V}{\phi(x)\geq b}$, an element $x\in P$ is contained in a bounded face of $P$ if and only if there does not exist some (non"=trivial) $r\in \smallSetOf{x\in V}{\phi(x)\geq 0}$ (a \emph{ray} of $P$) and some $\lambda\in \RR_{>0}$ with $x-\lambda r\in P$. Note that $P$ is bounded from below if and only if all rays of $P$ are positive. We now give an alternative characterisation for the tight-span.

\begin{lem}\label{lem:minimal}
Let $\cA\subseteq V$ be a configuration of positive points. Then $\tightspan w \cA$ is a subset of the set of minimal elements of $\envelope w \cA$. If, additionally, $\envelope{w}{\cA}$ is bounded from below, then $\tightspan w \cA$ equals the set of minimal elements of $\envelope{w}{\cA}$.
\end{lem}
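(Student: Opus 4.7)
The plan is to apply the ray-based characterisation of bounded-face membership that is stated in the paragraph immediately preceding the lemma: a point $x \in \envelope{w}{\cA}$ lies in a bounded face if and only if there is no nontrivial ray $r$ (a vector with $\scp{a}{r} \ge 0$ for all $a \in \cA$) and no $\lambda > 0$ with $x - \lambda r \in \envelope{w}{\cA}$. The technical bridge connecting this with the partial order $\preceq$ is that because every $a \in \cA$ is positive, any positive vector $r$ automatically satisfies $\scp{a}{r} \ge 0$ and is therefore a ray.

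For the first inclusion, take $x \in \tightspan{w}{\cA}$ and suppose $x' \in \envelope{w}{\cA}$ with $x' \preceq x$. Set $r := x - x'$. By the definition of $\preceq$, the vector $r$ is positive in $B$, and positivity of every $a \in \cA$ then gives $\scp{a}{r} \ge 0$, so $r$ is a ray. If $r$ were nontrivial, choosing $\lambda = 1$ would yield $x - \lambda r = x' \in \envelope{w}{\cA}$, contradicting the ray characterisation applied to $x$. Hence $r = 0$ and $x' = x$, so $x$ is minimal.

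For the converse under the bounded-from-below hypothesis, let $x$ be minimal in $\envelope{w}{\cA}$ and suppose for contradiction that $x \notin \tightspan{w}{\cA}$. Then there exist a nontrivial ray $r$ and $\lambda > 0$ with $x - \lambda r \in \envelope{w}{\cA}$. By the remark preceding the lemma, boundedness from below forces every ray of $\envelope{w}{\cA}$ to be positive; in particular $r$ is positive and nonzero, so $\lambda r$ is positive and nonzero, and therefore $x - \lambda r \prec x$ strictly. This contradicts minimality of $x$.

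The argument is short and the main content is really the observation in the first paragraph. The only subtle point, and the one that cannot be avoided, is the role of the bounded-from-below hypothesis in the second direction: without it, a ray could decrease one coordinate of $x$ while increasing another, so $x - \lambda r$ would be $\preceq$-incomparable to $x$ and would fail to contradict minimality. The hypothesis is what lets us upgrade "$r$ is a ray" to "$r$ is positive", which is exactly what makes the order-theoretic and polyhedral notions coincide.
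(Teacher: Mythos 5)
Your proof is correct and follows essentially the same route as the paper's: both directions rest on the ray characterisation of bounded faces together with the observation that positivity of the points of $\cA$ makes every positive vector a ray, and that boundedness from below makes every ray positive. If anything, your first direction is stated slightly more carefully than the paper's (which phrases non-minimality as decreasing along a single basis vector $b\in B$, whereas you correctly take the general positive difference $r = x - x'$), but the underlying argument is identical.
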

\begin{proof}
Let $x\in\tightspan w \cA$ be non"=minimal, that is, there exist $b\in B$ and $\lambda\in \RR_{>0}$ such that $x-\lambda b\in P$. By positivity, we have $\scp a{b}\geq 0$ for all $a\in \cA$ and hence  $b$ is a ray of $\envelope{w}{\cA}$ contradicting the assumption $x\in\tightspan w \cA$.

Conversely, let $x\in\envelope{w}{\cA}\setminus\tightspan w \cA$, $r$ be a ray of $\envelope{w}{\cA}$ and $\lambda \in \RR_{>0}$ be such that $x-\lambda r\in \envelope{w}{\cA}$. Since $\envelope{w}{\cA}$ is bounded from below, $r$ is positive and hence $x-\lambda r\preceq x$, so $x$ is not minimal.
\end{proof}

Another simple but useful observation is the following:
\begin{lem}\label{lem:ts-shift}
Let $\cA\subseteq V$ be a point configuration, $w:\cA \to \RR$ a weight function, $v\in V$, and $w'=w+\scp \cdot v$. Then $\tightspan w\cA=\tightspan{w'}\cA+v$.
\end{lem}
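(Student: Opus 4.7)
The plan is to first prove the stronger fact that the envelopes themselves are related by the same translation, i.e.\ $\envelope{w}{\cA}=\envelope{w'}{\cA}+v$, and then to deduce the claim about tight-spans from the general principle that an affine isomorphism of polyhedra preserves the face lattice together with the property of being a bounded face.

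For the first step I would simply unfold the definitions. By construction $w'(a)=w(a)+\scp{a}{v}$, so a point $x\in V$ lies in $\envelope{w'}{\cA}$ exactly when $\scp{a}{x}\ge -w(a)-\scp{a}{v}$ for every $a\in\cA$. Using bilinearity of the scalar product, this rearranges to $\scp{a}{x+v}\ge -w(a)$, which is precisely the condition $x+v\in\envelope{w}{\cA}$. Hence the translation $T_v:V\to V$, $x\mapsto x+v$, restricts to a bijection $\envelope{w'}{\cA}\to\envelope{w}{\cA}$; in symbols, $\envelope{w'}{\cA}+v=\envelope{w}{\cA}$.

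For the second step, $T_v$ is an affine bijection of $V$ and therefore induces an isomorphism between the face lattices of the two envelopes. Moreover, a face $F$ of $\envelope{w'}{\cA}$ is bounded if and only if its translate $T_v(F)=F+v$ is bounded, since translations preserve boundedness. Taking the union over bounded faces on both sides gives $\tightspan{w}{\cA}=\tightspan{w'}{\cA}+v$, as required.

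There is no genuine obstacle in this argument: the only computation is the bilinearity identity $\scp{a}{x+v}=\scp{a}{x}+\scp{a}{v}$, and the rest is the elementary observation that an ambient translation commutes with taking bounded faces of a polyhedron.
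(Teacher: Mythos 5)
Your proof is correct and follows essentially the same route as the paper: both unfold the definitions and use bilinearity of $\scp{\cdot}{\cdot}$ to show $\envelope{w'}{\cA}=\envelope{w}{\cA}-v$, and then pass to the unions of bounded faces. The only difference is cosmetic --- where the paper says this step is obvious, you spell out that a translation is an affine bijection preserving face lattices and boundedness, which is a fine (and slightly more careful) way to justify the same conclusion.
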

\begin{proof}
For all $x\in V$, we have 
\[ \scp {a} x \ge -w'(a)= -(w(a)+\scp av) \Leftrightarrow \scp a{x+v}\geq -w \text{ for all } a\in \cA\,. \]
Hence
\begin{align*}
\envelope{w'}{\cA}&=\SetOf{x\in V}{\scp a{x+v}\geq -w \text{ for all } a\in \cA}\\
&=\SetOf{y-v\in V}{\scp a{y}\geq -w \text{ for all } a\in \cA}=\envelope{w}{\cA}-v\,.
\end{align*}
Obviously, this equation carries over to the unions of the bounded faces, that is, the tight-spans.
\end{proof}



Tight-spans of a point configuration $\cA$ with certain weight functions are closely associated to other objects defined by these weight functions, so-called regular subdivisions which we will define now. The convex hull of $\cA$ is denoted by $\conv \cA$ and the relative interior of a set $A\subseteq V$ is denoted by $\relint A$. For a point configuration $\cA$ we call $F\subseteq \cA$ a \emph{face} of $\cA$ if there exists a supporting hyperplane $H$ of $\conv \cA$ such that $F=\cA\cut H$. An \emph{edge} of $\cA$ is a face of size $2$.  A \emph{subdivision} $\subdiv$ of a point configuration~$\cA$ is a collection of subconfigurations of~$\cA$ satisfying the following three conditions (see~\cite[Section~2.3]{Triangulations}):
\begin{itemize}
\item(SD1)\label{sd:containement} If $F\in\subdiv$ and $\bar F$ is a face of $F$, then $\bar F\in \subdiv$.
\item(SD2)\label{sd:union} $\conv \cA =\Union_{F\in \subdiv} \conv F$.
\item(SD3)\label{sd:intersection} If $F, \bar F\in\subdiv$,$F\not=F'$, then $\relint(\conv F) \cut \relint( \conv \bar F) =\emptyset$.
\end{itemize}

See Figure~\ref{fig:subdiv} for some examples illustrating 
these concepts. A subdivision is a \emph{triangulation} if all faces are simplices, that is, configurations formed by the vertices of a simplex. If $\cA$ is a simplex the only possible subdivision of $\cA$ is the trivial subdivision $\cP(\cA)$ with sole maximal cell being $\cA$ itself.

\begin{figure}
\input{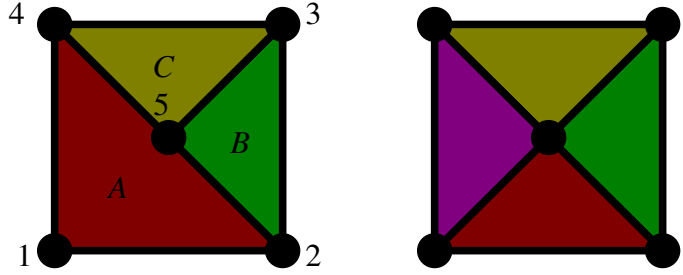}
\caption{Two collections of subconfigurations 
of the five points $\{1,2,3,4,5\}$, as indicated
by the triangles. The collection of subconfigurations
on the left is not a subdivision, 
as it violates (SD3): The intersection of the interior 
of convex hull of the edge $\{2,4\}$ (which is a face of 
the triangle $A=\{1,2,4\}$) and the convex hull of 
the edge $\{4,5\}$ is non-empty. In contrast, the 
collection of subconfigurations on the right 
is a subdivision of $\{1,2,3,4,5\}$. It contains four 
maximal faces (triangles) of cardinality 3 and 
eight edges of cardinality 2.}
\label{fig:subdiv}
\end{figure}

A common way (see \cite[Chapter~5]{Triangulations}) to define such a subdivision is the following: Given a weight function $w:\cA \to \RR$ we consider the \emph{lifted polyhedron}
\begin{equation*}
    \lift{w}{\cA} \ = \conv\SetOf{(w(a),a)}{a\in\cA} \, + \, \nonnegRR (1,0) \subseteq \RR\times V\, .
  \end{equation*}
  
The \emph{regular subdivision} $\Asubdivision w \cA$ of~$\cA$ with respect to $w$ is obtained by taking the configurations $\smallSetOf{b\in\cA}{(w(b),b)\in F}$ for all lower faces $F$ of $\lift w \cA$ (with respect to the first coordinate; by definition, these are exactly the bounded faces). So the elements of $\Asubdivision w \cA$ are the projections of the bounded faces of $\lift w \cA$ to the last $d$ coordinates.

We can now state the relationship between tight-spans and regular subdivisions of point configurations:

\begin{prop}[Proposition~2.1 in \cite{Herrmann09b}]\label{prop:duality}
  The polyhedron $\envelope{w}{\cA}$ is affinely equivalent to the polar dual of the polyhedron $\lift w \cA$.
  Moreover, the face poset of $\tightspan{w}{\cA}$ is anti"=isomorphic to the face poset of
  the interior lower faces (with respect to the first coordinate) of $\lift{w}{\cA}$.
\end{prop}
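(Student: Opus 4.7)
The plan is to exhibit the duality directly via the affine embedding $\psi\colon V\to\RR\times V$, $x\mapsto(1,x)$, exploiting the rewriting $\langle a,x\rangle+w(a)=\langle (1,x),(w(a),a)\rangle$. Under $\psi$, the defining inequality $\langle a,x\rangle\ge -w(a)$ of $\envelope{w}{\cA}$ becomes $\langle (1,x),(w(a),a)\rangle\ge 0$, and $\langle (1,x),(1,0)\rangle=1$ is automatically positive. Since $\lift{w}{\cA}$ is generated by the points $(w(a),a)$ together with the recession ray $\RR_{\ge 0}(1,0)$, the image $\psi(\envelope{w}{\cA})$ equals the intersection of the hyperplane $\{t=1\}$ with the dual cone $\{z\in\RR\times V : \langle z,p\rangle\ge 0\text{ for all }p\in\lift{w}{\cA}\}$; this slice realises the polar dual in our affine setting, yielding the affine equivalence in the first statement.

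For the face lattices I would use the active-set map. To a face $F$ of $\envelope{w}{\cA}$ assign $S(F)=\{a\in\cA : \langle a,x\rangle=-w(a)\text{ for all }x\in F\}$, and to a lower face $G$ of $\lift{w}{\cA}$ its projection $S(G)\subseteq\cA$. For any $x$ in the relative interior of $F$, the subset $\{(w(a),a) : a\in S(F)\}$ is exactly the set of minimisers on $\lift{w}{\cA}$ of the linear functional $(t,y)\mapsto t+\langle x,y\rangle$; since the $t$-coefficient is positive, this minimiser is a bounded (hence lower) face of $\lift{w}{\cA}$. The two assignments are mutually inverse and inclusion-reversing, giving an anti-isomorphism between the face lattice of $\envelope{w}{\cA}$ and the lattice of all lower faces of $\lift{w}{\cA}$.

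The final step is to cut this anti-isomorphism down to \emph{bounded} faces of $\envelope{w}{\cA}$ and \emph{interior} lower faces of $\lift{w}{\cA}$. A face $F$ is bounded iff the subspace $\{r\in V : \langle a,r\rangle=0\text{ for all }a\in S(F)\}$ meets the recession cone $\{r : \langle a,r\rangle\ge 0\text{ for all }a\in\cA\}$ only at $0$, whereas $G(F)$ is an interior lower face iff no nontrivial supporting hyperplane of $\conv\cA$ contains $\conv S(F)$. The codimension-$1$ assumption on $\aff\cA$ lets one shift the normal of any such supporting hyperplane by a multiple of the direction normal to $\aff\cA$, producing an element of the recession cone with the required vanishing on $S(F)$, and conversely. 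The main obstacle is carrying out this normalisation carefully so that the ``trivial'' normal direction of $\aff\cA$ is correctly quotiented out; once that is done, the remainder is routine polyhedral bookkeeping.
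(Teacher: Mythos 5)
A preliminary remark: the paper never proves this proposition---it is imported verbatim from [Herrmann09b]---so your argument has to be judged on its own merits rather than against an in-paper proof. Your skeleton is the standard and correct one: realise $\envelope{w}{\cA}$ as the $\{t=1\}$-slice of the dual cone of $\lift{w}{\cA}$, then match faces through active sets. The computations you actually carry out are right: $\scp{(1,x)}{(w(a),a)}=w(a)+\scp{a}{x}$, so the slice of the dual cone at $t=1$ is exactly $\{1\}\times\envelope{w}{\cA}$; and for $x\in\relint F$ with $S(F)\neq\emptyset$, the minimisers on $\lift{w}{\cA}$ of $(t,y)\mapsto t+\scp{x}{y}$ are precisely the points $(w(a),a)$ with $a\in S(F)$, which form a bounded, hence lower, face.

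The gap sits in the sentence ``the two assignments are mutually inverse'', and in which hypothesis makes it true. To pass from a lower face $G$ back to a face of the envelope you must exhibit a point of $\envelope{w}{\cA}$ whose active set is \emph{exactly} $S(G)$. By definition $G$ minimises some functional $(t,y)\mapsto t+\scp{c}{y}$, but the minimum value $m$ is in general nonzero, so $c$ itself need not lie in $\envelope{w}{\cA}$; one must first replace $c$ by $c+u$ where $\scp{u}{a}=-m$ for all $a\in\aff\cA$. This is precisely the normalisation trick of your third paragraph, but it is needed already here; without it your second paragraph is an assertion, not a proof. Worse, that trick requires more than the stated codimension-one hypothesis: a linear functional that is constant and \emph{nonzero} on $\aff\cA$ exists if and only if $0\notin\aff\cA$ (if $0\in\aff\cA$, then $\aff\cA$ is a linear hyperplane, and a linear functional constant on a linear subspace vanishes on it). If $0\in\aff\cA$, the proposition itself is false: take $V=\RR^2$, $\cA=\{(1,0),(-1,0)\}$, $w\equiv 0$. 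Then $\aff\cA$ is the $x$-axis (codimension one), and $\envelope{w}{\cA}$ is the $y$-axis, which has no bounded faces, so $\tightspan{w}{\cA}=\emptyset$; yet $\subdivision{w}{\cA}$ is the trivial subdivision of the segment $\conv\cA$, which has exactly one interior face, namely $\cA$ itself. So both your claimed lattice anti-isomorphism and the proposition fail under the paper's literal hypothesis. Note also that your first paragraph implicitly needs the same assumption: identifying the slice with ``the polar dual'' treats $\lift{w}{\cA}$ as full-dimensional in its affine hull $\RR\times\aff\cA$, a hyperplane avoiding the origin---again exactly the condition $0\notin\aff\cA$. The repair is to add the hypothesis $0\notin\aff\cA$ (equivalently $\spann\cA=V$), which holds for the configurations $\cA(X)$, $\bar\cB(X,Y)$ and $\cB(X,Y)$ to which the paper applies this duality, since they lie in the hyperplane $\SetOf{f}{\sum_{x}f(x)=2}$, and then to run the shift argument in both places where it is needed. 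With that amendment your outline does go through.
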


We shall not define all the notions of this proposition, but note that, as a consequence, the (inclusion) maximal faces of the tight-span $\tightspan{w}{\cA}$ correspond to the (inclusion) minimal interior faces of $\Asubdivision w \cA$. Here, a face of $\Asubdivision w \cA$ is an \emph{interior face} if it is not entirely contained in the boundary of $\conv \cA$. In particular, the structure of $\tightspan w \cA$ determines the structure of $\Asubdivision w \cA$ and vice versa.

We now consider splits of point configurations (see \cite{MR2502496} for details on splits of polytopes and \cite{Herrmann09b} for generalisations to point configurations): A \emph{split} $T$ of a point configuration~$\cA$ is a subdivision of $\cA$  which
has exactly two maximal faces denoted by $T_+$ and $T_-$
(see e.g. Figure~\ref{fig:splits}). The affine hull of $T_+\cap T_-$ is a hyperplane $H_T$ (in the affine hull of $\cA$), the \emph{split hyperplane} of $T$ with respect to~$\cA$. Conversely, it follows from (SD2) and (SD3) that a hyperplane defines a split of $\cA$ if and only if its intersection with the (relative) interior of~$\cA$ is nontrivial and it does not separate the endpoints of any edge of $\cA$. A split $T$ is a regular subdivision, so we have a lifting function $w_T$ such that $\subdivision{w_T}{\cA}=T$; see \cite[Lemma~3.5]{MR2502496}. A set $\cT$ of splits of $\cA$ is called \emph{compatible} if for all $T_1,T_2\in \cT$ the intersection of $H_{T_1}\cap H_{T_2}$ with the relative interior of $\conv \cA$ is empty.

\begin{figure}
    \includegraphics[width=.6\textwidth]{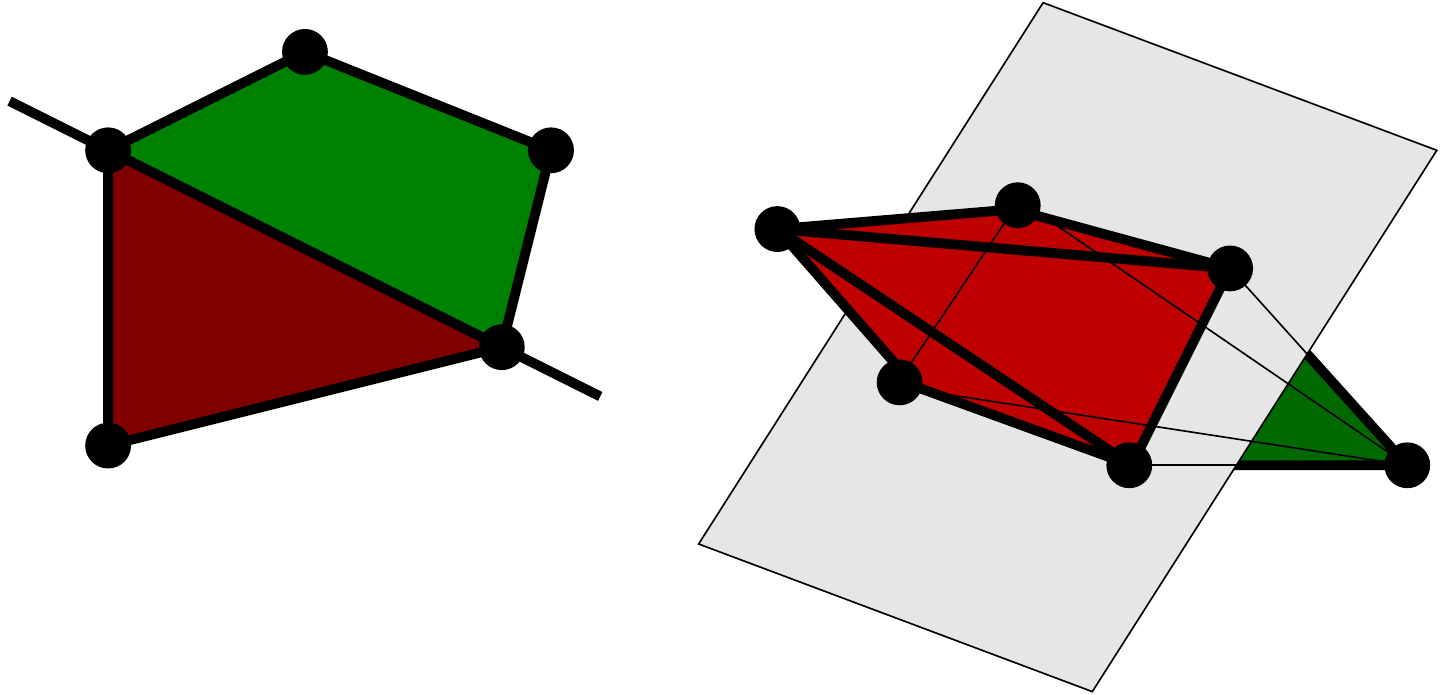}
\caption{To the left a split $T$ of a configuration of five points forming a pentagon together with its split hyperplane (line) $H_T$. To the right a split of the point configuration $\bar \cA(X)$ for $\card X=4$ whose convex hull is an octahedron.}
\label{fig:splits}
\end{figure}

The following observation, which is a slight generalisation of~\cite[Proposition 4.6]{MR2502496}, characterises when the tight-span of a point configuration is a tree and will be the key to some of our results.

\begin{prop}\label{prop:1-dim:comp}
Let $\cA$ be a point configuration and $w:\cA \to \RR$ a weight function. Then the tight-span $\tightspan w \cA$ is a tree if and only if the subdivision $\subdivision w \cA$ is a common refinement of compatible splits of $\cA$.
\end{prop}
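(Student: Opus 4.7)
The plan is to translate the statement into one about the regular subdivision $\subdivision{w}{\cA}$ via Proposition~\ref{prop:duality}, and then to argue purely in terms of interior faces of $\subdivision{w}{\cA}$. By that proposition, the face poset of $\tightspan{w}{\cA}$ is anti-isomorphic to that of the interior faces of $\subdivision{w}{\cA}$, so $\dim\tightspan{w}{\cA}=\dim\conv\cA-d'$, where $d'$ denotes the minimal dimension of an interior face. Since $\tightspan{w}{\cA}$ is contractible (see the discussion in the introduction), it is a tree if and only if it is at most one-dimensional. Hence the claim reduces to showing that every minimal interior face of $\subdivision{w}{\cA}$ has codimension at most~$1$ in $\conv\cA$ if and only if $\subdivision{w}{\cA}$ is the common refinement of compatible splits of~$\cA$.

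For the backward direction, suppose $\subdivision{w}{\cA}$ is the common refinement of compatible splits $T_1,\dots,T_k$. Every interior face of codimension~$1$ then lies on some split hyperplane $H_{T_i}$, and compatibility forces $H_{T_i}\cap H_{T_j}$ to avoid $\relint\conv\cA$ whenever $i\ne j$. An interior face of codimension $\geq 2$ would have to lie on at least two of the hyperplanes $H_{T_i}$ inside $\relint\conv\cA$, which is impossible. Hence every interior face has codimension at most~$1$, as required.

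For the forward direction, assume the tight-span is a tree, so every minimal interior face $F$ of $\subdivision{w}{\cA}$ has codimension~$1$. Let $H_F$ denote the affine hull of $F$. The heart of the argument is to show that $H_F$ defines a split of $\cA$: clearly $H_F$ meets $\relint\conv\cA$, and I would argue by contradiction that $H_F$ separates no edge $\{a_1,a_2\}$ of $\cA$ -- if it did, a maximal cell of $\subdivision{w}{\cA}$ adjacent to $F$ and containing one of $a_1,a_2$ would be cut by $H_F$, producing an interior face of codimension $\geq 2$, in violation of the dimension hypothesis. Once this is established, each maximal cell of $\subdivision{w}{\cA}$ lies entirely on one side of $H_F$, so $\subdivision{w}{\cA}$ refines the split $T_F$ determined by $H_F$. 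Pairwise compatibility of the splits $\{T_F\}$ then follows immediately: if two such hyperplanes $H_{T_F},H_{T_{F'}}$ met inside $\relint\conv\cA$, that intersection would lie in an interior face of $\subdivision{w}{\cA}$ of codimension $\geq 2$, again contradicting the hypothesis.

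The main obstacle is the step in the forward direction verifying that $H_F$ separates no edge of $\cA$. This is precisely where the generalisation from vertex sets of polytopes (as in \cite[Proposition~4.6]{MR2502496}) to arbitrary point configurations requires additional care, since for a general configuration the edges of $\cA$ need not be edges of any cell of $\subdivision{w}{\cA}$; the subdivision axiom (SD3) together with the codimension hypothesis must be combined carefully to exhibit the offending codimension-$2$ face. The remainder of the argument is then a routine bookkeeping of codimensions of interior faces.
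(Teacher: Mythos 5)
The paper never actually proves this proposition: it is stated as ``a slight generalisation of \cite[Proposition~4.6]{MR2502496}'' with no argument given, so your attempt must be judged on its own terms rather than against a proof in the text. Your reduction via Proposition~\ref{prop:duality} (the tight-span is a tree if and only if every interior face of $\subdivision{w}{\cA}$ has codimension at most $1$ in $\conv\cA$) is correct, and your backward direction is essentially sound: at a point of $\relint\conv\cA$ in the relative interior of a face of a common refinement of splits, the codimension of that face is accounted for entirely by the split hyperplanes through the point, so compatibility forbids interior faces of codimension $\geq 2$.

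The genuine gap is exactly the step you flag as the main obstacle, and the mechanism you propose for it cannot be repaired as stated. First, the maximal cells adjacent to $F$ (those having $F$ as a face) are supported by $H_F=\aff F$, since a codimension-$1$ face of a full-dimensional polytope has its affine hull as its unique supporting hyperplane; thus the cells adjacent to $F$ are precisely cells that are \emph{never} cut by $H_F$, and there is no reason a cell containing $a_1$ or $a_2$ should be adjacent to $F$ at all. Second, and more fundamentally, a hyperplane cutting a maximal cell does not ``produce'' any face of $\subdivision{w}{\cA}$: cells are what they are, so no interior face of codimension $\geq 2$ arises this way. The missing idea is the intermediate lemma $\conv F=\conv\cA\cap H_F$, and this is where the codimension hypothesis must be applied --- namely to the proper faces of $F$ itself. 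If the inclusion $\conv F\subseteq\conv\cA\cap H_F$ were strict, a segment from a point of $\relint\conv F$ to a point of $\relint(\conv\cA\cap H_F)\setminus\conv F$ would cross the relative boundary of $\conv F$ at a point of $\intt\conv\cA$; that point lies in $\conv G$ for a proper face $G$ of $F$, exhibiting an interior face of codimension $\geq2$, a contradiction. With the lemma in hand the edge condition follows: if $H_F$ strictly separated $a_1,a_2$ with $\{a_1,a_2\}$ an edge of $\cA$, the crossing point of $[a_1,a_2]$ with $H_F$ would lie in $\conv\cA\cap H_F=\conv F$, hence in $\conv G$ for some face $G$ of $F$; since the only points of $\cA$ on the face $\conv\{a_1,a_2\}$ of $\conv\cA$ are $a_1$ and $a_2$, this forces $a_1,a_2\in G\subseteq F\subseteq H_F$, contradicting strict separation. (Incidentally, your parenthetical worry is misplaced: an edge of $\cA$ \emph{is} automatically a cell of every subdivision of $\cA$, because the subdivision restricts to a subdivision of the boundary face $\conv\{a_1,a_2\}$, whose only configuration points are $a_1,a_2$; the difficulty lies in the lemma above, not there.) Finally, you only show that $\subdivision{w}{\cA}$ \emph{refines} each split $T_F$; equality with the common refinement still needs the (short) observation that any wall of $\subdivision{w}{\cA}$ lying strictly inside a cell of the common refinement would itself be one of the $H_{T_F}$, which is not merely ``bookkeeping of codimensions'' and should be stated.
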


An important theorem concerning splits of point configurations is the Split Decomposition Theorem; see \cite[Theorem~3.10]{MR2502496} and \cite[Theorem~2.2]{MR2252108}. It states that each weight function $w$ inducing a subdivision $\subdivision{w}{\cA}$ of a point configuration $\cA$ can be uniquely decomposed in a certain coherent way into a split prime weight function and a sum of split weight functions. Here, we only need the following direct corollary of this fact:

\begin{cor}\label{cor:split-decomposition}
Let $\cA$ be a point configuration and $w:\cA\to\RR$ a weight function such that $\subdivision w \cA$ is a common refinement of a set $\cT$ of compatible splits of $\cA$. Then there exists a function $\alpha:\cT\to\RR_{>0}$ such that
\[
w=\sum_{T\in\cT}\alpha(T)w_T\,.
\]
\end{cor}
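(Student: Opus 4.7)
My plan is to derive the corollary as a direct application of the Split Decomposition Theorem cited above, combined with the structural hypothesis that $\subdivision{w}{\cA}$ is a common refinement of $\cT$.

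First I would invoke the Split Decomposition Theorem to write $w$ in its unique coherent form
\[
w \;=\; w_0 \;+\; \sum_{T' \in \cS} \alpha(T')\, w_{T'},
\]
where $w_0$ is split-prime, $\cS$ is a compatible set of splits of $\cA$, $\alpha(T') > 0$ for every $T' \in \cS$, and the decomposition is coherent in the sense that $\subdivision{w}{\cA}$ refines each split $T' \in \cS$. The task then splits into showing (a) that $\cT \subseteq \cS$, (b) that no further splits appear, i.e.\ $\cS \subseteq \cT$, and (c) that the split-prime remainder $w_0$ vanishes.

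Step (a) is immediate from the hypothesis: since $\subdivision{w}{\cA}$ is a common refinement of $\cT$, it refines every $T \in \cT$, and the coherence characterisation of $\cS$ in the Split Decomposition Theorem then forces $T \in \cS$ with $\alpha(T) > 0$. For steps (b) and (c) I would exploit the additive behaviour of coherent decompositions on subdivisions: the subdivision induced by the sum above equals the common refinement of $\subdivision{w_0}{\cA}$ with the splits in $\cS$. Combined with the hypothesis that $\subdivision{w}{\cA}$ is already the common refinement of $\cT$, this identification leaves no room for additional splits from $\cS \setminus \cT$ nor for any non-trivial split-prime contribution from $w_0$, and the uniqueness clause of the Split Decomposition Theorem then pins down $\cS = \cT$ and $w_0 = 0$.

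I expect step (c), namely ruling out $w_0$, to be the main obstacle. A weight function whose induced subdivision is a common refinement of compatible splits carries no split-prime information, but making this rigorous requires a careful appeal to the uniqueness statement and (if the corollary is understood up to affine functions) the fact that affine residuals can be absorbed via Lemma~\ref{lem:ts-shift} without affecting the subdivision or the tight-span. Once $w_0 = 0$ and $\cS = \cT$ are established, the claimed formula $w = \sum_{T \in \cT} \alpha(T)\, w_T$ is just a rewriting of the coherent decomposition.
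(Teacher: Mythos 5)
Your overall strategy --- feed the hypothesis into the Split Decomposition Theorem and then argue that the split set of the decomposition equals $\cT$ and that the split-prime part vanishes --- is exactly what the paper intends: the paper gives no proof at all, presenting the statement as a direct corollary of the cited theorem, so your skeleton is the right one. Your remark about the affine ambiguity of the $w_T$ is also a genuine point the paper glosses over: the stated equality can only hold modulo affine functions (or for a suitable normalisation of the $w_T$), which is harmless for all uses in the paper by Lemma~\ref{lem:ts-shift}. Steps (a) and (b) are essentially fine: (a) rests on the standard fact that a split appears with positive coefficient in the coherent decomposition if and only if $\subdivision{w}{\cA}$ refines it, and (b) follows because any split refined by the common refinement of $\cT$ has its hyperplane covered, inside $\relint(\conv\cA)$, by the walls of that refinement, which lie in $\bigcup_{T\in\cT}H_T$; a convex codimension-one set cannot be covered by finitely many other hyperplanes, so the split must already lie in $\cT$.

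Step (c), however, has a real gap: the uniqueness clause of the Split Decomposition Theorem cannot rule out a nontrivial split-prime part $w_0$. Uniqueness says only that $w$ admits one coherent decomposition; it does not say the decomposition is determined by the subdivision $\subdivision{w}{\cA}$ (it is not: rescaling $w$ rescales all coefficients without changing the subdivision), so knowing that $\subdivision{w}{\cA}$ is a common refinement of splits extracts nothing from uniqueness alone. What actually kills $w_0$ is geometric: (i) a split-prime weight function induces a subdivision refining no split --- this is the characterisation underlying the decomposition theorem; (ii) by coherency of the decomposition, $\subdivision{w_0}{\cA}$ is a coarsening of $\subdivision{w}{\cA}$, which by hypothesis is the common refinement of $\cT$; and (iii) since the splits in $\cT$ are compatible, their hyperplanes pairwise miss the relative interior of each other's intersection with $\conv\cA$, so each $H_T$ contributes exactly one wall of the common refinement, whence every coarsening of the common refinement of $\cT$ is itself the common refinement of some subset $\cT'\subseteq\cT$. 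Combining (i)--(iii) forces $\cT'=\emptyset$, so $\subdivision{w_0}{\cA}$ is the trivial subdivision and $w_0$ is affine, after which it can be absorbed as you indicate. Without an argument of this kind (or an explicit appeal to the fact that the secondary cone of a common refinement of compatible splits is the simplicial cone spanned by the $w_T$ modulo affine functions), your plan does not close.
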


\section{Splits of Sets and Point Configurations}\label{sec:splits}

When relating the tight-span of metrics on $X$ and the point configuration $\bar \cA(X)$ (the set of vertices of the second hypersimplex $\Hypersimplex 2X$), a key observation of \cite{MR2252108} is that a split of the set $X$ corresponds to a split of the point configuration $\bar \cA(X)$ and vice versa. We now explain how this fact leads to some further relationships between splits of $X$ and splits of $\bar\cA(X)$.

Let $X$ be a finite set and $A,B\subseteq X$ two non-empty subsets with $A\cap B=\emptyset$. The collection $\{A,B\}$ is called a \emph{partial split} of $X$. The pair $(A,B)$ is called a \emph{directed partial split} of $X$. If in addition $A\cup B=X$, we call $\{A,B\}$ a \emph{split} of $X$ and $(A,B)$ a \emph{directed split} of $X$.
For a subset $C\subseteq X$ a (partial) split $\{A,B\}$ of $X$ is said to \emph{split}~$C$ if neither of the intersections $A\cap C$ or $B\cap C$ is empty.

Two partial splits $\{A,B\}$, $\{C,D\}$ of $X$ are called \emph{compatible} if one of the following four conditions is satisfied:
\begin{align}\label{cond:split:compatible}
A&\subseteq C \quad\text{and}\quad B\supseteq D\,,\notag\\
A&\subseteq D\quad \text{and}\quad B\supseteq C\,,\\
A&\supseteq C \quad\text{and} \quad B\subseteq D\,,\notag\\
A&\supseteq D \quad\text{and} \quad B\subseteq C\,.\notag
\end{align}
Note that this condition implies that there exist $E\in\{A,B\}$ and $F\in\{C,D\}$ such that $E\cap F=\emptyset$, a characterisation usually taken as the definition of compatibility for splits of $X$ (see, e.g., \cite{MR2060009}).
Two directed partial splits $(A,B)$, $(C,D)$ of~$X$ are called \emph{compatible} if one of the following four conditions is satisfied:
\begin{align}\label{cond:dir-split:compatible}
A&\subseteq C \quad\text{and}\quad B\supseteq D\,,\notag\\
A&\subseteq X\setminus C \quad \text{and}\quad B\supseteq X\setminus D\,,\\
A&\supseteq C \quad\text{and} \quad B\subseteq D\,,\notag\\
X\setminus A&\supseteq C \quad\text{and} \quad X\setminus B\subseteq  D\,, \notag
\end{align}

Note that Condition~\eqref{cond:dir-split:compatible} implies Condition~\eqref{cond:split:compatible} and that Conditions~\eqref{cond:split:compatible} and~\eqref{cond:dir-split:compatible} are equivalent for (directed) splits. A set $\cS$ of partial (directed) splits of $X$ is called \emph{compatible} if each two elements of $\cS$ are compatible. Furthermore, a set $\cS$ of directed splits of $X$ is called \emph{strongly} compatible if there exists an ordering $(A_1,B_1),\dots,(A_l,B_l)$ of the elements of $\cS$ such that $A_i\subseteq A_{i+1}$ and $B_i\supseteq B_{i+1}$ for all $1\leq i<l$.

We now investigate the relation of these different kinds of compatibility with compatibility of splits of the point configurations defined in the introduction.

First, we consider the point configuration $\cA(X)$. Splits of this point configuration were first studied by Hirai~\cite{MR2233884,MR2252108}. 

\begin{prop}[Proposition~4.4 in \cite{MR2252108}]\label{prop:splits-a}
Let $X$ be a finite set. For a partial split $\{A,B\}$ of $X$ the hyperplane given by the equation
\[
\sum_{i\in A} f(i)=\sum_{i\in B} f(i)
\]
defines a split of the point configuration $\cA(X)$. Moreover, all splits of $\cA(X)$ arise in this way.
\end{prop}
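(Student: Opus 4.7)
The plan is to verify both directions by direct analysis of the linear form defining the hyperplane together with the face structure of $\cA(X)$; note that $\conv \cA(X)$ is the dilated simplex with vertices $\{2e_x : x\in X\}$, so its $1$-dimensional faces are the segments $[2e_x, 2e_y]$, each of which carries the three collinear lattice points $2e_x, e_x+e_y, 2e_y$ of $\cA(X)$.

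For the forward direction, given a partial split $\{A,B\}$ of $X$, set $\phi(f) = \sum_{i\in A} f(i) - \sum_{i\in B} f(i)$ and $H = \{f : \phi(f)=0\}$. A direct computation gives $\phi(\cA(X)) \subseteq \{-2,-1,0,1,2\}$, with the value on $e_x+e_y$ determined by the positions of $x,y$ in $A \cup B \cup C$, where $C := X \setminus (A\cup B)$. I would then verify (i) that $H$ meets the relative interior of $\conv\cA(X)$, via a small perturbation of the centroid along a vector that kills $\phi$, and (ii) that $H$ cuts each $1$-dimensional face $[2e_x, 2e_y]$ of $\conv\cA(X)$ only at a lattice point of $\cA(X)$. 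For (ii), a short case check on the values of $\phi$ shows that whenever $2e_x$ and $2e_y$ lie on strictly opposite sides of $H$, the points $x,y$ must lie one in $A$ and the other in $B$, forcing $\phi(e_x+e_y)=0$ so that the cut passes through $e_x+e_y \in \cA(X)$. Together, (i) and (ii) imply that $H$ defines a split of $\cA(X)$, with maximal cells $T_\pm = \{a \in \cA(X) : \pm\phi(a) \geq 0\}$.

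For the converse, let $H \subseteq \aff\cA(X)$ be the hyperplane of a split with maximal cells $T_\pm$. Write $H = \{f : \scp \alpha f = c\}$ for some $\alpha \in \RR^X$ (not a multiple of $\vones$) and $c \in \RR$. Using the identity $\sum_{i\in X} f(i) = 2$ on $\aff\cA(X)$, substitute $\beta = \alpha - (c/2)\vones$, so that the equation of $H$ becomes $\scp \beta f = 0$. Non-triviality of the split provides lattice points of $\cA(X)$ strictly on each side of $H$. Since $H$ is a split hyperplane, the cut of each $1$-dimensional face $[2e_x, 2e_y]$ of $\conv\cA(X)$ must occur at a lattice point of $\cA(X)$; because $e_x+e_y$ is the unique interior lattice point on this segment, this translates into the implication: $\beta_x > 0$ and $\beta_y < 0$ force $\beta_x + \beta_y = 0$. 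Put $A = \{x \in X : \beta_x > 0\}$ and $B = \{x \in X : \beta_x < 0\}$; both sets are non-empty, for otherwise every point of $\cA(X)$ would lie on a single closed side of $H$, contradicting the existence of two distinct maximal cells. The implication above then forces $\beta_x = p$ on $A$ and $\beta_x = -p$ on $B$ for a common $p>0$, and rescaling by $1/p$ recovers the partial-split equation $\sum_{i\in A} f(i) = \sum_{i\in B} f(i)$.

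The principal obstacle is the converse direction, specifically the extraction of the rigid discrete pattern $\beta \in \{-p, 0, p\}^X$ from the purely geometric splitting condition. The key mechanism is that each $1$-dimensional face of the ambient simplex carries the interior lattice point $e_x+e_y$: this single extra lattice point forces the linear relation $\beta_x + \beta_y = 0$ whenever the endpoints lie strictly across $H$, and this local rigidity then propagates to uniform magnitudes $\pm p$ across the partition blocks $A$ and $B$.
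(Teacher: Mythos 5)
Your proof is correct, but note that for this proposition the paper offers no proof of its own --- it is quoted as Proposition~4.4 of Hirai \cite{MR2252108} --- so the natural comparison is with the paper's proof of the analogous statement for $\bar\cB(X,Y)$ in Section~3. In the forward direction you and that proof proceed in the same spirit (a point of the hyperplane in the relative interior, plus an integrality argument showing the hyperplane never crosses an edge of the convex hull at a forbidden point); the one vague spot is your ``small perturbation of the centroid'', which if implemented naively (say along $e_a-e_b$ for a single pair $a\in A$, $b\in B$) can leave the simplex when $|A|$ and $|B|$ are very unbalanced --- it is cleaner to just exhibit the point $f$ with $f(i)=\tfrac1{2|A|}$ on $A$, $f(i)=\tfrac1{2|B|}$ on $B$, $f(i)=\tfrac1{|C|}$ on $C=X\setminus(A\cup B)$ (and $\tfrac1{|A|},\tfrac1{|B|}$ when $C=\emptyset$), exactly as the paper does for $\bar\cB(X,Y)$. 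Where you genuinely diverge is the converse: the paper's converse for $\bar\cB(X,Y)$ rests on total unimodularity of the vertex matrix of a product of simplices plus a facet analysis, a tool unavailable for $\cA(X)$, whose points are not the vertex set of a polytope; your argument instead exploits precisely the feature that distinguishes $\cA(X)$, namely the configuration point $e_x+e_y$ interior to each geometric edge $[2e_x,2e_y]$, which forces $\beta_x+\beta_y=0$ whenever $\beta_x>0>\beta_y$ and hence the rigid pattern $\beta\in\{-p,0,p\}^X$. That is elementary, correct, and arguably cleaner than a unimodularity argument. One point worth spelling out for full rigor, in both directions: since $\cA(X)$ contains collinear points, it has no faces of cardinality two, so the split criterion quoted in Section~2 (``does not separate the endpoints of any edge of $\cA$'') must be read, as you implicitly do, as saying that any transversal crossing of a one-dimensional face of $\conv\cA(X)$ must occur at a point of the configuration; this is exactly what (SD2) enforces, since a crossing at a non-configuration point would leave part of that edge uncovered by the convex hulls of the two cells. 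Adding that one sentence would make your argument self-contained.
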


The compatibility can be characterised as follows:
\begin{prop}[Theorem~2.3 in \cite{MR2233884}]\label{prop:partial-splits-comp}
A set $\cS$ of partial splits of $X$ is compatible if and only if $\smallSetOf{T_S}{S\in\cS}$ is a compatible set of splits of $\cA(X)$.
\end{prop}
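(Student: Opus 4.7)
The plan is to prove both directions of the biconditional: the forward direction (compatibility of partial splits implies compatibility of the corresponding splits of $\cA(X)$) is elementary, while the reverse direction requires LP duality followed by a case analysis.

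For the forward direction, I would take two distinct partial splits $S_1=\{A_1,B_1\}$ and $S_2=\{A_2,B_2\}$ in $\cS$; by the compatibility condition, after relabeling one may assume $A_1\subseteq A_2$ and $B_2\subseteq B_1$. Any $f\in\relint\conv\cA(X)=\{f>0:\sum f=2\}$ is strictly positive, and if such $f$ lies on both hyperplanes $H_{T_{S_i}}=\{\sum_{A_i}f=\sum_{B_i}f\}$ then the chain
\[
\sum_{A_1} f \;\le\; \sum_{A_2} f \;=\; \sum_{B_2} f \;\le\; \sum_{B_1} f \;=\; \sum_{A_1} f
\]
forces equality throughout, whence $\sum_{A_2\setminus A_1}f=\sum_{B_1\setminus B_2}f=0$; strict positivity then gives $A_1=A_2$ and $B_1=B_2$, contradicting $S_1\neq S_2$. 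Hence $T_{S_1}$ and $T_{S_2}$ are compatible splits of $\cA(X)$.

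For the reverse direction I argue the contrapositive: assuming $T_{S_1}$ and $T_{S_2}$ are compatible splits of $\cA(X)$, deduce the compatibility of $S_1,S_2$ as partial splits. Compatibility of the splits means the polyhedral cone $P=\{f\in\RR^X_{\ge 0}:\sum_{A_i}f=\sum_{B_i}f,\ i=1,2\}$ contains no strictly positive element. A simple averaging argument (if for each $x\in X$ some $f_x\in P$ had $f_x(x)>0$, then $\sum_{x}f_x\in P$ would be strictly positive everywhere) produces an $x^*\in X$ with $f(x^*)=0$ for every $f\in P$; applying LP duality to $\max\{f(x^*):f\in P\}=0$ yields scalars $\lambda_1,\lambda_2\in\RR$ with
\[
\lambda_1(\vones_{A_1}-\vones_{B_1})+\lambda_2(\vones_{A_2}-\vones_{B_2})\;\ge\;e_{x^*}\quad\text{pointwise on }X.
\]

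The final ingredient is a case analysis on the nine regions partitioning $X$ by membership in $\{A_1,B_1,X\setminus(A_1\cup B_1)\}$ combined with $\{A_2,B_2,X\setminus(A_2\cup B_2)\}$: the left-hand side above is constant on each region, equal to one of $\pm\lambda_1\pm\lambda_2$, $\pm\lambda_1$, $\pm\lambda_2$ or $0$. The requirement that this functional be $\ge 0$ on every non-empty region (and $\ge 1$ on the region containing $x^*$), together with the non-emptiness of $A_1,B_1,A_2,B_2$, forces a configuration of empty and non-empty regions that directly matches one of the four compatibility conditions~\eqref{cond:split:compatible}. The main obstacle is keeping this case analysis clean; the natural organization is by the signs $(\sgn\lambda_1,\sgn\lambda_2)$, and the symmetry $A_i\leftrightarrow B_i$ together with $(\lambda_1,\lambda_2)\mapsto -(\lambda_1,\lambda_2)$ reduces the eight possible types of $x^*$ (the ninth "neither-nor" type being immediately excluded since it would require $0\ge 1$) to essentially two representative cases.
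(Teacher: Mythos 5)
Your proof is correct, but it cannot be compared to ``the paper's proof'' in the usual sense: the paper gives no proof of Proposition~\ref{prop:partial-splits-comp} at all, importing it as Theorem~2.3 of Hirai~\cite{MR2233884}. So your argument is a genuinely self-contained alternative, and its steps do check out. The forward chain of inequalities is exactly right (the relabelling to $A_1\subseteq A_2$, $B_2\subseteq B_1$ is legitimate since a partial split is an unordered pair and the hyperplane is symmetric in $A$ and $B$), and I verified that the dual case analysis closes: if $x^*\in A_1\cap A_2$, then $\lambda_1+\lambda_2\ge 1$ forces $B_1\cap B_2=\emptyset$, and non-emptiness of $B_1$ and $B_2$ then eliminates every configuration except $B_2\subseteq A_1$ and $B_1\subseteq A_2$, the fourth condition of~\eqref{cond:split:compatible}; if $x^*\in A_1\setminus(A_2\cup B_2)$, then $\lambda_1\ge 1$ forces $B_1\subseteq A_2\cup B_2$, and the sign of $\lambda_2$ splits into the third or fourth condition. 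The closest in-paper comparison is Proposition~\ref{prop:p_o_s-compatible}, the analogous statement for $\bar\cB(X,Y)$, where the hard direction is handled in the opposite, primal form: assuming the combinatorial condition \emph{fails}, an explicit point of $H_T\cap H_U\cap\relint(\conv\bar\cB(X,Y))$ is constructed region by region with concrete rational values (Cases~1--5 there). Your LP-duality certificate is precisely the dual of that construction: it trades guessing coordinates for a sign analysis of $(\lambda_1,\lambda_2)$, which is more systematic and would scale better to configurations with more complicated region structure, at the cost of invoking duality rather than staying elementary. Two small inaccuracies, neither fatal: what you call the ``contrapositive'' is in fact a direct proof of the reverse implication; and reducing the eight possible locations of $x^*$ to \emph{two} representatives requires the symmetry exchanging the two splits ($1\leftrightarrow 2$) in addition to the swaps $A_i\leftrightarrow B_i$ --- with the latter alone you get three orbits, namely those of $A_1\cap A_2$, $A_1\setminus(A_2\cup B_2)$ and $A_2\setminus(A_1\cup B_1)$.
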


Now we consider the point configurations  $\bar \cB(X,Y)$, first describing what their splits are.

\begin{prop}
Let $X$ and $Y$ be two disjoint finite sets with $\card X,\card Y\geq 2$ and $A\subsetneq X$, $B\subsetneq Y$ non-empty. Then the hyperplane given by
\begin{align}\label{eq:p_o_s-splits}
\sum_{i\in A}f(i)=\sum_{j\in B}f(j)
\end{align}
defines a split of the point configuration $\bar \cB(X,Y)$. Moreover, all splits of $\bar \cB(X,Y)$ arise in this way.
\end{prop}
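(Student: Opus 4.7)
The plan is to apply the criterion recalled in Section~\ref{sec:point-configurations}: a hyperplane $H$ defines a split of a point configuration $\cA$ if and only if $H$ meets $\relint\conv\cA$ and strictly separates the endpoints of no edge of $\cA$. The combinatorial input I need is that $\conv\bar\cB(X,Y)=\Delta_X+\Delta_Y$ is a product of simplices $\Delta_{\card X-1}\times\Delta_{\card Y-1}$, so the edges of $\bar\cB(X,Y)$ are exactly the pairs $\{e_{x_1}+e_y,\,e_{x_2}+e_y\}$ (with $x_1\neq x_2$) and $\{e_x+e_{y_1},\,e_x+e_{y_2}\}$ (with $y_1\neq y_2$); indeed, any pair $\{e_{x_1}+e_{y_1},\,e_{x_2}+e_{y_2}\}$ with $x_1\neq x_2$ and $y_1\neq y_2$ shares its midpoint with $\{e_{x_1}+e_{y_2},\,e_{x_2}+e_{y_1}\}$ and so cannot form a face.

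For the forward direction I set $\ell(f)=\sum_{i\in A}f(i)-\sum_{j\in B}f(j)$, so that $\ell(e_x+e_y)=\vones_A(x)-\vones_B(y)\in\{-1,0,1\}$. The positive side of $H$ contains the points with $x\in A$, $y\notin B$, which is non-empty because $A$ and $Y\setminus B$ are; symmetrically for the negative side, so $H$ meets $\relint\conv\bar\cB(X,Y)$. On an edge $\{e_x+e_{y_1},\,e_x+e_{y_2}\}$ the two values of $\ell$ lie both in $\{0,1\}$ if $x\in A$ and both in $\{-1,0\}$ otherwise, so no strict separation occurs; edges of the other type are handled symmetrically.

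For the converse, let $H$ be a split hyperplane of $\bar\cB(X,Y)$ with defining equation
\[
\ell(f)=\sum_{x\in X}c_xf(x)+\sum_{y\in Y}c_yf(y)-d=0.
\]
The non-separation condition on edges with a common $x$-coordinate forces, for each $x\in X$, either $c_y\ge d-c_x$ for all $y\in Y$ or $c_y\le d-c_x$ for all $y\in Y$, and symmetrically in the other variable. Setting
\[
X^{\ge}=\{x:c_x\ge d-\min\nolimits_y c_y\},\qquad X^{\le}=\{x:c_x\le d-\max\nolimits_y c_y\},
\]
and analogously $Y^{\ge},Y^{\le}$, the existence of points with $\ell>0$ and with $\ell<0$ forces all four sets to be non-empty. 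The main step is a tightness argument: for $x\in X^{\ge}$ the chain
\[
c_x\ \ge\ d-\min\nolimits_y c_y\ \ge\ d-(d-\max\nolimits_{x'}c_{x'})\ =\ \max\nolimits_{x'}c_{x'}\,,
\]
whose middle inequality uses $Y^{\le}\neq\emptyset$, collapses to equality throughout. By this and its three symmetric counterparts, $c_x$ equals $\nu:=\max_x c_x$ on $X^{\ge}$ and $\mu:=\min_x c_x$ on $X^{\le}$, while $c_y=d-\mu$ on $Y^{\ge}$ and $c_y=d-\nu$ on $Y^{\le}$; moreover $\mu<\nu$ and $X=X^{\ge}\sqcup X^{\le}$, $Y=Y^{\ge}\sqcup Y^{\le}$.

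Substituting these four values back into $\ell$ and using $\sum_X f=\sum_Y f=1$ on the affine hull, a short computation gives
\[
\ell(f)=(\nu-\mu)\Bigl(\sum_{x\in X^{\ge}} f(x)+\sum_{y\in Y^{\ge}} f(y)-1\Bigr)
\]
on the affine hull. Since $\nu>\mu$ and $\sum_{Y^{\ge}} f=1-\sum_{Y^{\le}} f$ on the affine hull, the equation of $H$ is equivalent to $\sum_{x\in A} f(x)=\sum_{y\in B} f(y)$ with $A=X^{\ge}\subsetneq X$ and $B=Y^{\le}\subsetneq Y$, both non-empty, as required. The main obstacle is the tightness step that extracts only two values of $c_x$ (and of $c_y$) from the four ``$\ge/\le$'' classes; once this is done the rewriting of $H$ is mechanical.
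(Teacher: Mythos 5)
Your proof is correct, and while your forward direction is essentially the paper's (both verify that the hyperplane meets $\relint\conv\bar\cB(X,Y)$ and that the integer-valued functional cannot strictly separate the endpoints of an edge of the product of simplices), your proof of the converse --- the harder ``moreover'' part --- takes a genuinely different and more elementary route. The paper normalises the defining functional so that its first non-zero coefficient is $1$, invokes total unimodularity of the vertex matrix of a product of simplices to force all non-zero coefficients into $\{\pm1\}$, and then uses the fact that simplices have no splits: the split hyperplane must induce a face on any simplex facet it meets, which rules out coefficients $\alpha_i=-\alpha_j$ for $i,j\in X$ and leaves only the stated equation. You instead work directly with edge non-separation: for each $x\in X$ the values $c_x+c_y-d$, $y\in Y$, must have a common weak sign, producing the four classes $X^{\ge},X^{\le},Y^{\ge},Y^{\le}$, all non-empty because the split hyperplane has vertices strictly on both sides; your min/max tightness chains then collapse the coefficients to just two values on each factor, and the substitution $\ell(f)=(\nu-\mu)\bigl(\sum_{x\in X^{\ge}}f(x)+\sum_{y\in Y^{\ge}}f(y)-1\bigr)$ (which I checked) rewrites $H$ in the required form with $A=X^{\ge}$, $B=Y^{\le}$. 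What your argument buys is self-containedness: no appeal to total unimodularity nor to the classification of splits of simplices, only to the edge structure of the product, which both proofs need anyway; what it costs is length and the bookkeeping of the four-class analysis, where the paper is shorter modulo the cited machinery. Two points you leave implicit are standard and harmless: that a hyperplane with configuration points strictly on both sides meets the relative interior of the convex hull (connect a relative interior point to a vertex on the appropriate strict side), and that pairs of vertices sharing a coordinate really are edges (you only argue that the remaining pairs are not); the paper asserts the same edge characterisation without proof, so this is not a gap relative to its standard.
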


Note that taking the complements of $A$ and $B$ simultaneously yields the same split of $\bar \cB(X,Y)$ (but there are no other choices).

\begin{proof}
First we remark that for all non-empty $A\subsetneq X$, $B\subsetneq Y$ the function $f\in \RR^{X\cup Y}$ defined by \begin{align*}
f(i)\ =\ \begin{cases} \frac1{2|A|}, & \text{if }i\in A\,,\\ \frac1{2\card{X\setminus A}}, & \text{if }i\in X\setminus A\,,\\ \frac1{2|B|}, & \text{if }i\in B\,,\\ \frac1{2\card{ Y\setminus B}}, & \text{if }i\in Y\setminus B\,,\end{cases}
\end{align*}
 is in the interior of $\conv \bar \cB(X,Y)$ since $0<f(i)<1$ for all $i\in X\cup Y$ and $\sum_{i\in X\cup Y}f(i)=2$. It is also in the hyperplane defined by Equation~\eqref{eq:p_o_s-splits}, since $\sum_{i\in A}f(i)=1$, $\sum_{i\in{B}} f(i)=1$. Hence all those hyperplanes meet the interior of $\conv \bar \cB(X,Y)$.
 
By the definition of $\bar \cB(X,Y)$, two vertices $u=(f_1,f_2)$, $v=(g_1,g_2)\in\RR^X\times\RR^Y$ of $\bar \cB(X,Y)$ are connected by an edge if and only if $f_1=g_1$ or $f_2=g_2$. So by going from $u$ to $v$ along an edge, the value on at most one side of Equation~\eqref{eq:p_o_s-splits} changes by at most $1$. Since for all elements of $\bar \cB(X,Y)$ all values occurring in Equation~\eqref{eq:p_o_s-splits} are integers, the corresponding hyperplane does not cut an edge of $\bar\cB(X,Y)$ and hence defines a split of $\bar\cB(X,Y)$.

Now, let $H=\{f\in\RR^{X\cup Y}\st\sum_{i\in X\cup Y} \alpha_i f(i)=0\}$ define a split of $\bar \cB(X,Y)$ for some $\alpha_i\in\RR$. We can assume that the first non-zero $\alpha_i$ is equal to $1$. However, since the matrix of vertices of a product of simplices is totally unimodular (i.e., all the determinants of all square submatrices are in $\{0,1,-1\}$ \cite{MR976522}), all other non-zero~$\alpha_j$ have to be equal to $\pm1$. Also, note that the product of simplices $\conv \bar \cB(X,Y)$ has $\card Y$ facets that are isomorphic to $\card X$"=dimensional simplices. The hyperplane $H$ has to meet at least one of these facets non-trivially. Since simplices have no splits, $H$ has to define a face of this facet~$F$. So we can conclude that we cannot have $\alpha_i=-\alpha_j$ for $i,j\in X$ since $H$ would then meet the interior of $F$. The only remaining possibility for $H$ is therefore Equation~\eqref{eq:p_o_s-splits} for arbitrary $A$ and $B$. Since for $A=\emptyset,$ $B=\emptyset$, $A=X$, or $B=Y$ this hyperplane would not intersect the relative interior of $\conv \bar \cB(X,Y)$, the proof is complete.
\end{proof}

Thus, a split of the point configuration $\bar \cB(X,Y)$ is defined by two sets $\emptyset \not=A\subsetneq X$ and $\emptyset \not=B\subsetneq Y$ and this representation is unique up to simultaneously taking the complements of $A$ and~$B$. Compatibility can now be characterised as follows

\begin{prop}\label{prop:p_o_s-compatible}
Let $T$ be a split of $\bar \cB(X,Y)$ defined by $A\subsetneq X$ and $B\subsetneq Y$, and $U$ a split of $\bar \cB(X,Y)$ defined by $C\subsetneq X$ and $D\subsetneq Y$. Then $T$ and $U$ are compatible if and only if one of the following conditions is satisfied:
\begin{align}\label{eq:p_o_s-compatible}
A&\subseteq C \quad\text{and}\quad B\supseteq D\,,\notag\\
A&\subseteq X\setminus C \quad \text{and}\quad B\supseteq Y\setminus D\,,\\
A&\supseteq C \quad\text{and} \quad B\subseteq D\,,\text{ or}\notag\\
X\setminus A&\supseteq C \quad\text{and} \quad Y\setminus B\subseteq D\,.\notag
\end{align}
\end{prop}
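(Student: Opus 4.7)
The plan is to exploit the fact that $\conv\bar\cB(X,Y)$ is affinely the product of two simplices $\Delta_X\times\Delta_Y$, where $\Delta_X=\conv\{e_i : i\in X\}$. Writing a point as a pair $(f_1,f_2)$, its relative interior consists of pairs with $f_1(i)>0$ for every $i\in X$ and $f_2(j)>0$ for every $j\in Y$. Compatibility of $T$ and $U$ thus amounts to the non-existence of such a pair satisfying both $\sum_{i\in A}f_1(i)=\sum_{j\in B}f_2(j)$ and $\sum_{i\in C}f_1(i)=\sum_{j\in D}f_2(j)$.

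For the forward direction, I would partition $X$ into the four cells $X_{11}=A\cap C$, $X_{10}=A\setminus C$, $X_{01}=C\setminus A$, $X_{00}=X\setminus(A\cup C)$, with the analogous partition of $Y$ using $B$ and $D$, and then treat each condition in turn. Under condition~(i), $A\setminus C=\emptyset$ and $D\setminus B=\emptyset$; subtracting the two equations gives $\sum_{i\in C\setminus A}f_1(i)=-\sum_{j\in B\setminus D}f_2(j)$, so both sides must vanish. Strict positivity on the relative interior forces $C\setminus A=\emptyset$ and $B\setminus D=\emptyset$, whence $A=C$, $B=D$ and $T=U$. Under condition~(ii), $A\cap C=\emptyset$ and $B\cup D=Y$; adding the two equations yields $\sum_{i\in A\cup C}f_1(i)=1+\sum_{j\in B\cap D}f_2(j)$, whose left-hand side is at most $1$ while the right-hand side is at least $1$; equality forces $A\cup C=X$ and $B\cap D=\emptyset$, so $(C,D)=(X\setminus A,Y\setminus B)$ represents the same split as $T$. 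Conditions~(iii) and~(iv) follow from (i) and (ii) by swapping the roles of $T$ and $U$ or by passing to the complementary representation of~$U$.

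For the converse I would argue contrapositively via a two-dimensional reduction. The linear map $\pi_X\colon\Delta_X\to[0,1]^2$ sending $f_1\mapsto\bigl(\sum_{i\in A}f_1(i),\sum_{i\in C}f_1(i)\bigr)$ maps $\Delta_X$ surjectively onto the polygon $P_X\subseteq[0,1]^2$ whose vertex set equals $\{(p,q)\in\{0,1\}^2 : X_{pq}\neq\emptyset\}$, and it sends $\relint\Delta_X$ onto $\relint P_X$; analogously one obtains $P_Y$ from a corresponding map $\pi_Y$. Compatibility of $T$ and $U$ is then equivalent to the disjointness of $\relint P_X$ and $\relint P_Y$ inside $[0,1]^2$. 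Because $A,C$ are proper non-empty subsets of $X$ (and similarly $B,D$), each of $P_X$ and $P_Y$ must contain at least one corner of $[0,1]^2$ with first coordinate $0$, one with first coordinate $1$, one with second coordinate $0$, and one with second coordinate $1$; it follows that each is either the full square, a triangle spanned by three corners, or one of the two diagonals $\{(0,0),(1,1)\}$ or $\{(0,1),(1,0)\}$.

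The four conditions correspond bijectively to the four ways of separating $P_X$ from $P_Y$ by the main or anti-diagonal of $[0,1]^2$: condition~(i) places them on opposite sides of the main diagonal, condition~(ii) on opposite sides of the anti-diagonal, while (iii) and (iv) give the reversed assignments. The main obstacle is the finite case analysis for the contrapositive: if no such diagonal separation occurs, then running through the admissible pairs of shapes—full square against anything, triangle against triangle, triangle against diagonal, and diagonal against diagonal—one checks directly that $\relint P_X\cap\relint P_Y$ is non-empty, and exhibits an explicit common point. This completes the contrapositive and hence the proof.
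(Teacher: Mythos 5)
Your proposal is correct in substance, and its necessity direction takes a genuinely different route from the paper's. The sufficiency direction is essentially the paper's own argument: the paper likewise subtracts (or adds) the two hyperplane equations, uses nonnegativity together with the fact that a point of $\relint\conv\bar\cB(X,Y)$ has no zero coordinate, and concludes $T=U$. For necessity, however, the paper negates the conditions into a nonemptiness requirement on the eight cells $A_1=A\setminus C,\ A_2=C\setminus A,\ A_3=A\cap C,\ A_4=X\setminus(A\cup C)$ and $B_1,\dots,B_4$ (your $X_{10},X_{01},X_{11},X_{00}$ and $Y_{10},\dots$), and then constructs an explicit common interior point of the two hyperplanes in five cases (with subcases), distinguished by how many cells are empty, by prescribing fractional coordinate sums. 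Your projection onto $[0,1]^2$ replaces that computation by a transparent picture: compatibility becomes separability of the two polygons $P_X,P_Y$ (each a square, a corner triangle, or a diagonal) by one of the two diagonals of the unit square, and the residual case check is short and essentially visual. The facts you invoke are standard (an affine map $\pi$ of a convex set $K$ satisfies $\relint\pi(K)=\pi(\relint K)$; a convex set lying in a closed half-plane whose relative interior meets the boundary line is contained in that line), so your argument is complete modulo the small case check you describe, and it is arguably cleaner and more conceptual than the paper's.

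One point needs care: your claim that the four printed conditions in \eqref{eq:p_o_s-compatible} correspond bijectively to the four diagonal separations is not literally true. As printed, the second and fourth conditions are logically identical (both say $A\cap C=\emptyset$ and $B\cup D=Y$), and the fourth separation --- $P_X$ weakly above the anti-diagonal and $P_Y$ weakly below it, i.e.\ $A\cup C=X$ and $B\cap D=\emptyset$, equivalently $A\supseteq X\setminus C$ and $B\subseteq Y\setminus D$ --- is absent from the list. Concretely, for $X=\{1,2,3\}$, $Y=\{4,5,6\}$, $A=\{1,2\}$, $B=\{4\}$, $C=\{2,3\}$, $D=\{5\}$, the two splits are compatible (adding the two equations forces $f(2)+f(6)=0$), yet none of the printed conditions holds for these representatives; condition three does hold after replacing $(C,D)$ by $(X\setminus C, Y\setminus D)$. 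So, as literally stated for fixed representatives, the proposition's only-if direction fails, and your contrapositive would get stuck in exactly this case: the printed conditions can all fail while a diagonal separation still occurs. This is a defect of the statement rather than of your method --- the paper's own proof silently negates the corrected list, since the fourth clause of its condition \eqref{eq:p_o_s-cond}, namely $A_4\not=\emptyset$ or $B_3\not=\emptyset$, is the negation of the missing separation, not of the printed fourth condition. Both your proof and the paper's thus establish the corrected statement, which agrees with the printed one only up to passing to the complementary representation of a split; you should flag this explicitly instead of asserting the bijection.
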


\begin{proof}
We define the sets
\begin{align*}
A_1&=A \setminus C,& A_2&=C \setminus A,& A_3&=A \cut C,& A_4&=X\setminus (A\union C),\\
B_1&=B \setminus D,& B_2&=D \setminus B,& B_3&= B \cut D,& B_4&=Y\setminus (B\union D)\,;
\end{align*}
and set
$$X_i\ =\ \sum_{i\in A_i}f(i),\quad Y_i\ =\ \sum_{i\in B_i}f(i)\,.$$


Then the hyperplanes for $T$ and $U$ are defined by
\begin{align}\label{eq:p_o_s-comp-splits}
X_1+X_3\ =\ Y_1+Y_3\quad \text{and}\quad X_2+X_3\ =\ Y_2+Y_3\,,
\end{align}
respectively. If we subtract these two equations, we get
\begin{align}\label{eq:p_o_s-comp-splits2}X_1-X_2\ =\ Y_1-Y_2\,.\end{align}

We first prove that Condition \eqref{eq:p_o_s-compatible} is sufficient for compatibility of $T$ and $U$. So suppose $T$ and $U$ are not compatible, $A\subseteq C$, and $D \subseteq B$.  This implies that there exists some $x\in H_T\cut H_U$ in the interior of $\conv \bar \cB(X,Y)$ with $X_1=Y_2=0$. From this, Equation~\eqref{eq:p_o_s-comp-splits2} and the fact that $\bar \cB(X,Y) \subseteq \RR_{\geq0}^{X\cup Y}$, we can further conclude that $X_2=Y_1=0$. So $x_i=0$ for all $i\in A_2 \union B_1$. However, if $x_i=0$ for some $i\in X\cup Y$, the point $x$ would be contained in the boundary facet of $\conv \bar \cB(X,Y)$ defined by $x_i=0$. So $A_2$ and $B_1$ are empty, and hence $T= U$. The second case follows similarly.

For the necessity, assume that \eqref{eq:p_o_s-compatible} does not hold. This is equivalent to
\begin{align}\label{eq:p_o_s-cond}
  A_1\not=\emptyset\text{ or }B_2\not=\emptyset,A_3\not=\emptyset\text{ or }B_4\not=\emptyset, A_2\not=\emptyset\text{ or }B_1\not=\emptyset,\text{ and }A_4\not=\emptyset\text{ or }B_3\not=\emptyset\,.
\end{align}

We will now distinguish several cases, depending on the number of sets $A_j$, $B_j$ which are empty. In each case we will give a point $x\in\relint(\conv\bar\cB(X,Y))\cut H_S \cut H_T$ . This will be done by assigning values in the interval $(0,1)$ to all $X_j,Y_j$ for which $A_j,B_j$, respectively, are non-empty such that Equation~\eqref{eq:p_o_s-comp-splits2} holds and $\sum X_j=\sum Y_j =1$. The explicit values of~$f\in\RR^{X\cup Y}$ are then obtained by setting $f(i)=\frac {X_j}{\card {A_j}}$, $f(i)=\frac{Y_j}{\card {B_j}}$, for $i\in A_j$, $i\in B_j$, respectively.

\noindent \textbf{Case 1:} None of the sets $A_j, B_j$ is empty. Then we simply set $X_j,Y_j=\frac14$ for all $j\in\{1,2,3,4\}$.

\noindent \textbf{Case 2:} One of the sets $A_j, B_j$ is empty. We assume without loss of generality that~$A_1=\emptyset$. Then we set $X_3=\frac12$, $X_4=Y_2=\frac38$, $Y_1=Y_3=\frac 14$, and $X_2=Y_4=\frac18$.

\noindent \textbf{Case 3:} Two of the sets $A_j, B_j$ are empty. As in Case~2, we assume that one of these sets is $A_1$. Using \eqref{eq:p_o_s-cond}, and taking into account that  neither $A,B,C,D$ nor their complements (in $X$ and $Y$, respectively) can be empty, we get the following possibilities:
\begin{itemize}
\item $A_1=A_2=\emptyset$: Set $X_3=X_4=\frac 12$ and $Y_i=\frac14$ for all $i\in\{1,2,3,4\}$.
\item $A_1=B_1=\emptyset$: Set $X_3=Y_3=\frac 12$ and $X_2=X_4=Y_2=Y_4=\frac14$.
\item $A_1=B_3=\emptyset$: Set $X_4=Y_2=\frac 12$ and $X_2=X_3=Y_1=Y_4=\frac14$.
\item $A_1=B_4=\emptyset$: Set $X_3=Y_2=\frac 12$ and $X_2=X_4=Y_1=Y_3=\frac14$.
\end{itemize}

\noindent \textbf{Case 4:} Three of the sets $A_j, B_j$ are empty. We again assume that $A_1$ is one of the sets. There remain three possibilities:
\begin{itemize}
\item $A_1=A_2=B_3=\emptyset$: Set $X_4=\frac 23$ and $X_3=Y_1=Y_2=Y_4=\frac13$.
\item $A_1=A_2=B_4=\emptyset$: Set $X_3=\frac 23$ and $X_4=Y_1=Y_2=Y_3=\frac13$.
\item $A_1=B_3=B_4=\emptyset$: Set $Y_2=\frac 23$ and $X_2=X_3=X_4=Y_1=\frac13$.
\end{itemize}

\noindent \textbf{Case 5:} Four of the sets $A_j, B_j$ are empty. By assuming that $A_1$ is one of them, this yields $A_1=A_2=B_3=B_4=\emptyset$. Set $X_3=X_4=Y_1=Y_2=\frac 12$.

\end{proof}

Note that in the special case where $Y$ is a disjoint copy of $X$ a directed partial split $S=(A,B)$ of $X$ gives rise to a split $T_S$ of $\bar\cB(X)$, namely the one defined by $A$ and $B$. In general, not all splits of $\bar \cB(X)$ arise in this way, since we assume that $A$ and $B$ are disjoint. Proposition~\ref{prop:p_o_s-compatible} then gives us the following.

\begin{cor}\label{cor:splits-axx:compatible}
A collection $\cS$ of directed partial splits of $X$ is compatible if and only if $\smallSetOf{T_S}{S\in\cS}$ is a compatible system of splits for $\bar \cB(X)$.
\end{cor}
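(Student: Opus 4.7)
The strategy is to invoke Proposition~\ref{prop:p_o_s-compatible} with $Y$ taken as the fixed disjoint copy of $X$ and translate its four compatibility conditions, written in terms of subsets of $X$ and $Y$, into the four conditions of~\eqref{cond:dir-split:compatible} for directed partial splits of $X$. Since compatibility in both settings is a pairwise property, it suffices to handle two splits at a time.

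First I would verify that the correspondence $S\mapsto T_S$ fits the hypotheses of the previous proposition: for a directed partial split $S=(A,B)$ with $A,B\subseteq X$ non-empty and $A\cap B=\emptyset$, let $B^Y\subseteq Y$ denote the image of $B$ under the fixed bijection $X\to Y$ that realises $Y$ as a disjoint copy of $X$. Since $A$ is disjoint from the non-empty set $B$ we have $A\subsetneq X$, and likewise $B^Y\subsetneq Y$, so the pair $(A,B^Y)$ meets the hypotheses of the parameterisation of splits of $\bar\cB(X,Y)$ and $T_S$ is a well-defined split of $\bar\cB(X)=\bar\cB(X,Y)$.

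Next, given two directed partial splits $S_1=(A,B)$ and $S_2=(C,D)$ of $X$, apply Proposition~\ref{prop:p_o_s-compatible} to $T_{S_1}$ and $T_{S_2}$, which are defined by $(A,B^Y)$ and $(C,D^Y)$. Under the identification $Y\leftrightarrow X$ one has $Y\setminus B^Y \leftrightarrow X\setminus B$ and $Y\setminus D^Y\leftrightarrow X\setminus D$, so the four lines of~\eqref{eq:p_o_s-compatible} translate verbatim into the four lines of~\eqref{cond:dir-split:compatible}. Hence $T_{S_1}$ and $T_{S_2}$ are compatible as splits of $\bar\cB(X)$ if and only if $S_1$ and $S_2$ are compatible as directed partial splits of $X$, and the corollary follows.

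The only point requiring any care is the non-uniqueness of the representation of a split of $\bar\cB(X,Y)$ (a split admits two descriptions differing by simultaneous complementation of the two subsets). This causes no difficulty here because the list~\eqref{eq:p_o_s-compatible} is manifestly invariant under replacing $(A,B^Y)$ or $(C,D^Y)$ by its complement pair, so the compatibility test for $T_{S_1},T_{S_2}$ is independent of the chosen representatives. Thus the translation above is unambiguous, and no calculation beyond matching the four conditions term by term is required.
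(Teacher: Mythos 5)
Your proof is correct and takes essentially the same route as the paper: the paper also obtains this corollary by specialising Proposition~\ref{prop:p_o_s-compatible} to the case where $Y$ is a disjoint copy of $X$, observing that a directed partial split $S=(A,B)$ yields the split $T_S$ of $\bar\cB(X)$ defined by $A$ and the copy of $B$, whereupon the four conditions of~\eqref{eq:p_o_s-compatible} become verbatim the four conditions of~\eqref{cond:dir-split:compatible}. Your closing remark about independence of the chosen representatives is careful but not actually needed, since Proposition~\ref{prop:p_o_s-compatible} is stated for an arbitrary defining pair, so applying it directly to the canonical representatives $(A,B^Y)$ and $(C,D^Y)$ already gives the equivalence.
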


Note that a characterisation of the splits of the point configuration $\cC(X)$ (the vertices of the cube) and their compatibility is given in \cite[Propositions~3.15 and 3.16]{H_thesis}; we refrain from stating or proving it here since we will not use it later.

\section{Tight-Spans of Symmetric Maps, Distances and Metrics}\label{sec:symmetric-functions}

A symmetric map $D:X\times X \to \RR$ with $D(x,x)=0$ for all $x\in X$ and $D(x,y)\geq 0$ for all $x,y\in X$ is called a \emph{distance} (or dissimilarity map) on $X$. It is called a \emph{metric} on $X$ if it additionally satisfies $D(x,y)+D(y,z)\geq D(x,y)$ for all $x,y,z\in X$ (\emph{triangle inequality}).

Now, consider the vector space $\RR^X=\{f:X\to \RR\}$ of all functions $X\to \RR$ with the natural basis $\smallSetOf{e_x}{x\in X}$ where $e_x\in \RR^{X}$ denotes the function sending $x$ to $1$ and all other elements of $X$ to $0$. Then the tight-span $T_D$ of a symmetric map $D: X\times X \to \RR$ is defined to be the set of all minimal elements of the polyhedron
\[
P_D=\SetOf{f\in\RR^X}{f(x)+f(y)\geq D(x,y) \text{ for all } x,y\in X}\,.
\]

Note that if $D$ is a distance, this is exactly the definition of $P_D$ and $T_D$ given by Hirai~\cite[Section 2.3]{MR2233884}. Also, if $D$ is a metric, $T_D$ corresponds to the tight-span of the metric $D$ as defined by Isbell \cite{MR82949} and Dress \cite{MR753872}.

Now, given a symmetric map $D: X\times X \to \RR $ we define a weight function $w^D:\cA(X)\to \RR$ on the point configuration $\cA(X)$ via $w^D(e_x+e_y)=-D(x,y)$. The following proposition is the key to deriving the relation between tight-spans of symmetric functions and tight-spans of the point configuration $\cA(X)$. In the special case where $D$ is a metric, this was the observation of Sturmfels and Yu~\cite{MR2097310} mentioned in the introduction.

\begin{prop}\label{prop:ts-dissimilarity}
Let $D: X\times X \to \RR$ be a symmetric function. Then we have:
\begin{enumerate}
\item $P_D=\envelope{w^D}{\cA(X)}$, and
\item $T_D=\tightspan{w^D}{\cA(X)}$.
\end{enumerate}
\end{prop}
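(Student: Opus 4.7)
The plan is to prove (1) by unwinding definitions and to derive (2) from (1) together with Lemma~\ref{lem:minimal}. Both parts should be relatively short: the content of the proposition is that the symmetric-function setting fits the general point-configuration framework once the right identifications are made.

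For part (1), I would simply substitute the definition of $w^D$ into the definition of the envelope. Since $\cA(X) = \{e_x + e_y : x,y \in X\}$ (where $x=y$ yields the point $2e_x$), the defining inequality $\scp{a}{f} \geq -w^D(a)$ for $a = e_x + e_y$ becomes $f(x) + f(y) \geq D(x,y)$. Running over all $x,y \in X$ (including $x=y$, which gives $2f(x) \geq D(x,x)$) reproduces exactly the inequalities that define $P_D$.

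For part (2), I plan to invoke Lemma~\ref{lem:minimal}. Two hypotheses need checking. First, every point of $\cA(X)$ has non-negative coordinates in the basis $\{e_x : x \in X\}$, so the configuration consists of positive points. Second, $\envelope{w^D}{\cA(X)} = P_D$ is bounded from below: for any $f \in P_D$ and any $x \in X$, the constraint coming from $2e_x \in \cA(X)$ gives $2f(x) \geq D(x,x)$, so $f(x) \geq D(x,x)/2$ for every $x$. Taking $M = \tfrac{1}{2}\min_{y\in X} D(y,y)$ yields a uniform lower bound on every coordinate of every element of $P_D$. With both hypotheses verified, Lemma~\ref{lem:minimal} identifies $\tightspan{w^D}{\cA(X)}$ with the set of minimal elements of $\envelope{w^D}{\cA(X)} = P_D$, and this set of minimal elements is by definition $T_D$.

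The only mildly subtle point is recognising that the inclusion of the diagonal points $2e_x$ in $\cA(X)$ (as opposed to using only $\bar\cA(X)$) is precisely what forces $P_D$ to be bounded from below, and is therefore what makes Lemma~\ref{lem:minimal} applicable in both directions; without these points one would only obtain the inclusion $\tightspan{w^D}{\cA(X)} \subseteq T_D$. I do not expect any genuine obstacle in the argument: once the dictionary between $D$ and $w^D$ and between $P_D$ and $\envelope{w^D}{\cA(X)}$ is in place, everything follows from the general lemma.
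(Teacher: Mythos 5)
Your proposal is correct and follows essentially the same route as the paper: part (1) by direct substitution of $w^D$ into the definition of the envelope, and part (2) by verifying positivity of $\cA(X)$ and boundedness from below (via the diagonal constraints $2f(x)\geq D(x,x)$ coming from the points $2e_x$) so that Lemma~\ref{lem:minimal} applies. Your closing remark about the role of the diagonal points $2e_x$ is exactly the reason the paper works with $\cA(X)$ rather than $\bar\cA(X)$ here.
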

\begin{proof}
\begin{enumerate}
\item We have 
\begin{align*}
P_D&=\SetOf{f: X \to \RR}{f(x)+f(y)\geq D(x,y) \text{ for all } x,y\in X}\\
&=\SetOf{f: X \to \RR}{\scp {e_x+e_y}f \geq D(x,y)\text{ for all } x,y\in X}\\
 &=\SetOf{f: X \to \RR}{\scp af\geq -w^D(a) \text{ for all } a\in\cA(X)}\\
&=\envelope{w^D}{\cA(X)}\,.
\end{align*}
\item Obviously, $\cA(X)$ is positive and $\envelope{w^D}{\cA(X)}$ is bounded from below since we have the inequalities $2e_x\geq -D(x,x)$ for all $x\in X$. Thus, the claim follows  from (a) and Lemma~\ref{lem:minimal}.
\end{enumerate}
\end{proof}

 We will now see that tight-spans of general symmetric maps and tight-spans of distances are essentially the same in the sense that they only differ by a simple shift.

\begin{prop}\label{prop:ts-sym-div}
Let $D:X\times X \to \RR$ be a symmetric function, $D'$ defined via
\[
D'(x,y)=D(x,y)-\frac 12\left( D(x,x)+D(y,y)\right)\,,
\]
and $ v:X\to \RR$ defined by $v(x)=\frac12 D(x,x)$.
Then $T_D=T_{D'}+v$.
\end{prop}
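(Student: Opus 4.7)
The plan is to reduce the statement to a direct combination of Proposition~\ref{prop:ts-dissimilarity} and Lemma~\ref{lem:ts-shift}. By Proposition~\ref{prop:ts-dissimilarity}, both $T_D$ and $T_{D'}$ can be written as tight-spans of the same point configuration $\cA(X)$, just with different weight functions, namely $w^D$ and $w^{D'}$ respectively. Lemma~\ref{lem:ts-shift} tells us that two weight functions differing by a linear functional $\scp \cdot v$ yield tight-spans differing only by the translation $v$. So the whole task reduces to verifying the identity
\[
w^{D'} \ = \ w^D + \scp\cdot v
\]
on $\cA(X)$, for $v(x)=\tfrac12 D(x,x)$.

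First I would check the identity on the two types of points in $\cA(X)$. For a point of the form $e_x+e_y$ with $x\neq y$, we have $\scp{e_x+e_y}{v}=v(x)+v(y)=\tfrac12(D(x,x)+D(y,y))$, and by the definitions of $w^D$, $w^{D'}$ and $D'$,
\[
w^{D'}(e_x+e_y)-w^D(e_x+e_y)\ =\ -D'(x,y)+D(x,y)\ =\ \tfrac12\bigl(D(x,x)+D(y,y)\bigr)\,,
\]
which matches. For a point $2e_x$, we have $\scp{2e_x}{v}=2v(x)=D(x,x)$, and $w^{D'}(2e_x)-w^D(2e_x)=-D'(x,x)+D(x,x)=D(x,x)$, which also matches.

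Having established this identity, Lemma~\ref{lem:ts-shift} gives $\tightspan{w^D}{\cA(X)}=\tightspan{w^{D'}}{\cA(X)}+v$, and Proposition~\ref{prop:ts-dissimilarity} translates this into $T_D=T_{D'}+v$. There is essentially no obstacle here; the only thing to be careful about is keeping track of the sign in the definition $w^D(e_x+e_y)=-D(x,y)$ and ensuring the shift identity is checked on both $e_x+e_y$ and the diagonal points $2e_x$, since both belong to $\cA(X)$ (as opposed to $\bar\cA(X)$).
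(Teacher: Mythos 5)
Your proposal is correct and follows essentially the same route as the paper: verify the identity $w^{D'}=w^D+\scp\cdot v$ on $\cA(X)$ and conclude via Lemma~\ref{lem:ts-shift}, with Proposition~\ref{prop:ts-dissimilarity} supplying the identification of $T_D$ and $T_{D'}$ with tight-spans of $\cA(X)$. The paper treats the points $e_x+e_y$ and $2e_x$ in one formula (allowing $x=y$), whereas you check the two cases separately, but this is only a cosmetic difference.
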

\begin{proof}
Our definitions imply that for $a=e_x+e_y\in \cA(X)$, we have
\[ w^{D'}(a)=-D(x,y)+\frac 12\left( D(x,x)+D(y,y)\right)=w^D(a)+\scp av\,. \]
Hence the claim follows from Lemma~\ref{lem:ts-shift}.
\end{proof}

Obviously, for an arbitrary symmetric map $D: X\times X\to \RR$ we have $D'(x,x)=0$ for all $x\in X$. However, $D'$ is not necessarily a distance, since $D'$ need not to be positive, even if $D$ is. Even so, the following lemma shows that the negative values of $D$ can be ignored when looking at the tight-span:

\begin{lem}\label{lem:ts-sym-div}
Let $D: X\times Y\to \RR$ be a symmetric function with $D(x,x)=0$ for all $x\in X$, and $D_+$ be defined by $D_+(x,y)=\max(0,D(x,y))$ for all $x,y\in X$. Then $T_D=T_{D_+}$.
\end{lem}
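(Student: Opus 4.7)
The plan is to show the stronger statement that the polyhedra themselves coincide, $P_D = P_{D_+}$, from which $T_D = T_{D_+}$ follows immediately by the definition of the tight-span as the set of minimal elements.

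First I would observe that every $f \in P_D$ is pointwise nonnegative. Indeed, specialising the defining inequality $f(x) + f(y) \ge D(x,y)$ to $y = x$ and using the hypothesis $D(x,x) = 0$ gives $2 f(x) \ge 0$, hence $f(x) \ge 0$ for all $x \in X$. This is the only place the assumption $D(x,x) = 0$ is used.

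Next I would check the two inclusions. For the inclusion $P_{D_+} \subseteq P_D$ there is nothing to do, since $D_+ \ge D$ pointwise. For the reverse inclusion, let $f \in P_D$ and fix $x, y \in X$. If $D(x,y) > 0$, then $D_+(x,y) = D(x,y)$ and the inequality $f(x) + f(y) \ge D_+(x,y)$ holds by assumption. If $D(x,y) \le 0$, then $D_+(x,y) = 0$ and we need $f(x) + f(y) \ge 0$, which follows from the nonnegativity established in the previous step. Hence $P_D = P_{D_+}$.

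The conclusion $T_D = T_{D_+}$ then follows immediately, since the tight-span of a symmetric map is defined as the set of minimal elements of the associated polyhedron, and equal polyhedra have equal sets of minimal elements. There is no real obstacle here; the only subtle point is recognising that the diagonal condition $D(x,x) = 0$ forces $f \ge 0$ throughout $P_D$, which automatically absorbs the constraints coming from negative values of $D$.
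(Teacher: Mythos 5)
Your proof is correct and takes essentially the same route as the paper's: both establish the stronger statement $P_D = P_{D_+}$, obtaining the inclusion $P_{D_+}\subseteq P_D$ from the pointwise inequality $D_+\geq D$, and the reverse inclusion from the observation that $2f(x)\geq D(x,x)=0$ forces every $f\in P_D$ to be nonnegative, so that $f(x)+f(y)\geq 0\geq$ nothing more than $D_+(x,y)$ requires. The identification of the tight-spans as sets of minimal elements of equal polyhedra then concludes the argument in both cases.
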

\begin{proof}
For two symmetric maps $E,F: X \to \RR$ with $E\geq F$ (pointwise) one directly sees that $P_E\subseteq P_F$, so  $P_{D_+}\subseteq P_D$. On the other hand, for all $f\in P_D$ and $x\in X$, we have $2f(x)\geq D(x,x)=0$, hence $f(x)+f(y)\geq 0\geq \max(0,D(x,y))=D_+(x,y)$ for all $x,y\in X$. This implies $P_D\subseteq P_{D_+}$.
Altogether, we have $P_D=P_{D_+}$ and hence $T_D=T_{D_+}$.
\end{proof}

So, by Proposition~\ref{prop:ts-sym-div} and Lemma~\ref{lem:ts-sym-div}, we get:

\begin{cor}\label{cor:ts-sym-div}
For any symmetric map $D:X\times X \to \RR$ there exists a distance map (namely $(D')_+:X\times X \to \RR$) and some $ v\in \RR^X$ such that $T_D=T_{(D')_+}+v$.
\end{cor}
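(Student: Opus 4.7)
The plan is to obtain this corollary as a direct composition of Proposition~\ref{prop:ts-sym-div} and Lemma~\ref{lem:ts-sym-div}, so the work is essentially bookkeeping; the only genuine verification is that $D'$ is a legitimate input for the lemma and that $(D')_+$ meets the definition of a distance.

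First I would apply Proposition~\ref{prop:ts-sym-div} to the given $D$ to obtain the identity $T_D = T_{D'} + v$, where $D'(x,y) = D(x,y) - \tfrac12(D(x,x)+D(y,y))$ and $v(x) = \tfrac12 D(x,x)$. Next, I would verify that $D'$ vanishes on the diagonal: substituting $y=x$ gives $D'(x,x) = D(x,x) - \tfrac12(D(x,x)+D(x,x)) = 0$, so the hypothesis of Lemma~\ref{lem:ts-sym-div} is satisfied by $D'$. Applying the lemma then yields $T_{D'} = T_{(D')_+}$, and combining the two equalities gives $T_D = T_{(D')_+} + v$.

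It remains to note that $(D')_+$ is genuinely a distance. Symmetry is inherited from $D$ (hence from $D'$), non-negativity is immediate from the definition $(D')_+(x,y) = \max(0, D'(x,y))$, and vanishing on the diagonal follows since $(D')_+(x,x) = \max(0, 0) = 0$. This completes the statement.

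The main (and only) conceptual obstacle is recognising that the normalisation step in Proposition~\ref{prop:ts-sym-div} automatically produces a function that vanishes on the diagonal, so that Lemma~\ref{lem:ts-sym-div} becomes applicable without further adjustment; beyond this observation, no further argument is required.
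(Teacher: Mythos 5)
Your proposal is correct and follows exactly the paper's route: the paper derives Corollary~\ref{cor:ts-sym-div} precisely by combining Proposition~\ref{prop:ts-sym-div} with Lemma~\ref{lem:ts-sym-div}, the only hypothesis to check being that $D'$ vanishes on the diagonal (which the paper notes just before the lemma). Your additional verification that $(D')_+$ satisfies the definition of a distance (symmetry, non-negativity, vanishing on the diagonal) is a worthwhile detail the paper leaves implicit.
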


We now turn to trees: Given a tree $T=(V,E)$, an edge-length function $\alpha:E\to \RR$ and a family $\cF=\smallSetOf{F_x}{x\in X}$ of subtrees of $T$, we can define a distance $D$ on $X$ by setting $D(x,y)=\min\smallSetOf{D_T(x,y)}{x\in F_x,y\in F_y}$. A distance arising in that way is called a \emph{distance between subtrees of a tree}.
Those distances can also be characterised in terms of partial splits: Given a partial split $P=\{A,B\}$ of $X$, a corresponding distance on $X$ is the defined by
\begin{equation}\label{eq:partial-distance}
d_P(i,j)=\begin{cases} 1,&\text{if } i\in A, j\in B \text{ or } i\in B, j\in A,\\
0,&\text{else\,,}\end{cases}
\end{equation}
for all $i,j\in X$. Furthermore, for a set $\cP$ of partial splits of $X$ and a function $\alpha\to\RR_{>0}$, we define $d_{(\cP,\alpha)}=\sum_{P\in \cP}\alpha(P)d_P$.

Using the relation between splits of the point configuration $\cA(X)$ and partial splits of $X$(Propositions~\ref{prop:splits-a} and \ref{prop:partial-splits-comp}), we get the following result, that can  alternatively be inferred from Hirai~\cite[Theorem~2.3]{MR2233884} together with Proposition~\ref{prop:ts-dissimilarity}.

\begin{thm}\label{thm:distance-tree}
Let $D$ be a distance. Then the following are equivalent:
\begin{enumerate}
\item The tight-span $\tightspan {w^D}{\cA(X)}$ is a tree.
\item $D$ is a distance of weights between subtrees of a tree.
\item There exists a compatible set $\cP$ of partial splits of $X$ and $\alpha:\cP\to \RR_{>0}$ such that $D=d_{(\cP,\alpha)}$.
\end{enumerate}
\end{thm}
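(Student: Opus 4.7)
The plan is to run the cycle (a)~$\Rightarrow$~(c)~$\Rightarrow$~(b)~$\Rightarrow$~(a), assembling pieces already developed in the paper. The three main tools will be Proposition~\ref{prop:1-dim:comp} (the tree property of a tight-span is equivalent to the subdivision being a common refinement of compatible splits of the configuration), Corollary~\ref{cor:split-decomposition} (which then gives a positive split decomposition of the weight function), and Propositions~\ref{prop:splits-a} and~\ref{prop:partial-splits-comp} (the compatibility-preserving dictionary between splits of $\cA(X)$ and partial splits of $X$).

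For (a)~$\Rightarrow$~(c), Proposition~\ref{prop:1-dim:comp} furnishes a compatible set $\cT$ of splits of $\cA(X)$ whose common refinement is $\subdivision{w^D}{\cA(X)}$; Propositions~\ref{prop:splits-a} and~\ref{prop:partial-splits-comp} turn this into a compatible set $\cP$ of partial splits of $X$. Corollary~\ref{cor:split-decomposition} gives positive $\alpha(T)$ with $w^D = \sum_{T\in\cT}\alpha(T)\,w_T$, and the calculation I need to make is that for the split $T_P$ corresponding to a partial split $P = \{A,B\}$, the weight $w^{d_P}$ (attached via Proposition~\ref{prop:ts-dissimilarity} to the one-partial-split distance $d_P$ of Equation~\eqref{eq:partial-distance}) induces the split $T_P$ of $\cA(X)$. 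Since the split decomposition is unique only modulo linear functions, one then compares values at the diagonal vertices $2e_x$, where both $D$ and every $d_P$ vanish; this forces the residual linear shift to be zero and yields $w^D = \sum_{P\in\cP}\alpha(P)\,w^{d_P}$, which evaluated at $e_x+e_y$ gives $D = d_{(\cP,\alpha)}$.

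For (c)~$\Rightarrow$~(b), I would extend the classical Splits-Equivalence Theorem to partial splits to realise $\cP$ as the system of edge-induced partial splits of a tree $T=(V,E)$ together with a subtree family $\cF=\{F_x\}_{x\in X}$: each $P=\{A,B\}\in\cP$ corresponds to an edge $e_P$ of $T$ whose removal separates $T$ into two components, one containing $F_x$ for $x\in A$ and the other for $x\in B$, with $F_x$ crossing $e_P$ precisely when $x$ is unassigned by $P$. Setting the length of $e_P$ to $\alpha(P)$ and checking pairs $(x,y)$ against single partial splits one at a time then gives $D_T(F_x,F_y)=\sum_P\alpha(P)\,d_P(x,y)=D(x,y)$. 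The direction (b)~$\Rightarrow$~(a) is essentially the reverse: the edges of $T$ induce a compatible family of partial splits realising $D$ as $d_{(\cP,\alpha)}$, so $w^D=\sum_P\alpha(P)\,w^{d_P}$ is a positive combination of weights each inducing a split from a compatible family of splits of $\cA(X)$, which forces $\subdivision{w^D}{\cA(X)}$ to be their common refinement, and Proposition~\ref{prop:1-dim:comp} delivers (a).

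The main obstacle will be the tree construction in (c)~$\Rightarrow$~(b): while the correspondence between compatible split systems of $X$ and $X$-labelled trees is classical, partial splits demand careful bookkeeping of the unassigned elements, and one must verify that the minimal subtree distance truly realises $d_{(\cP,\alpha)}$. The identification of $w^{d_P}$ as a split weight for $T_P$ in (a)~$\Rightarrow$~(c) is a routine but necessary computation that underpins the translation between distances on $X$ and weight functions on $\cA(X)$.
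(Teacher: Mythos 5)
Your proposal is correct and follows essentially the same route the paper intends: the paper states Theorem~\ref{thm:distance-tree} without a written-out proof, deriving it from the split dictionary (Propositions~\ref{prop:splits-a} and~\ref{prop:partial-splits-comp}) together with Proposition~\ref{prop:1-dim:comp} and Corollary~\ref{cor:split-decomposition}, which is exactly the machinery you assemble, and your argument mirrors the template the paper does write out for the directed analogue (Theorem~\ref{thm:tight-span:tree}). Your extra care with the affine ambiguity in the split decomposition (killing the residual shift by evaluating at the diagonal points $2e_x$) is a detail the paper glosses over, and it is handled correctly.
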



\section{Tight-Spans of Non"=Symmetric Maps}\label{sec:non-symmetric}

Hirai and Koichi~\cite{HiraiKoichi} introduced the concept of tight-spans of directed distances in order to study certain multicommodity flow problems. In this section, we will show that these tight-spans can also be considered as tight-spans of point configurations and we will generalise their concept to general non-symmetric maps.

Let $X$ and $Y$ be finite sets with $X\cap Y=\emptyset$ and $D:X\times Y\to \RR$ an (arbitrary) map. We define the polyhedra
\begin{align*}
\Pi_D&=\SetOf{f:X\cup Y \to \RR}{f(x)+f(y)\geq D(x,y) \text{ for all } x\in X, y\in Y}, \text{ and }\\
 P_D&=\Pi_D\cap \RR^{X\cup Y}_{\geq 0}\,.
\end{align*}
In addition, the sets of minimal elements of $\Pi_D$ and $P_D$ are called $\Theta_D$ and $T_D$, respectively. The set $T_D$ is called the \emph{tight-span} of $D$.

Recall that $\bar \cB(X,Y)\subseteq \RR^{X\cup Y}$ is defined as the configuration of all points $e_x+e_y$ with $x\in X$, $y\in Y$ and also that $\cB(X,Y)=\bar \cB(X,Y)\cup \smallSetOf{2 e_x}{x\in X\cup Y}$. Note that $\cB(X,Y),\bar\cB(X,Y)\subseteq \cB(X\cup Y)$ and that $\conv \cB(X,Y)$ is the simplex $\conv\smallSetOf{2 e_x}{x\in X\cup Y}$, whereas all elements of $\bar\cB(X,Y)$ are vertices of $\conv \bar \cB(X,Y)$, which is the product of a $(\card X-1)$"= and a $(\card Y-1)$"=dimensional simplex.

To the map $D: X\times Y \to \RR $ we associate weight functions $\bar w^D:\bar \cB(X,Y)\to \RR$ by $\bar w^D(e_x+e_y)=-D(x,y),x\in X,y\in Y$ and $w^D:\cB(X,Y)\to \RR$ by
\[
w^D(a)=\begin{cases}
 \bar w^D(a),&\text{if } a=e_x+e_y,x\in X,y\in Y,\\
0,&\text{else}\,.
\end{cases}
\]
The following can be proven in a similar way to Proposition~\ref{prop:ts-dissimilarity}:

\begin{prop}\label{prop:ts-directed}
Let $D: X\times Y \to \RR$, then we have
\begin{enumerate}
\item $\Pi_D=\envelope{\bar w^D}{\bar\cB(X,Y)}$,
\item $\Theta_D=\tightspan{\bar w^D}{\bar\cB(X,Y)}$,
\item $P_D=\envelope{w^D}{\cB(X,Y)}$, and
\item $T_D=\tightspan{w^D}{\cB(X,Y)}$.
\end{enumerate}
\end{prop}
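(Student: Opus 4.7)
The plan is to mirror the two-step argument used to prove Proposition~\ref{prop:ts-dissimilarity}: first identify the polyhedra $\Pi_D$ and $P_D$ with envelopes of point configurations by unpacking the scalar product inequalities, and then invoke Lemma~\ref{lem:minimal} to identify $\Theta_D$ and $T_D$ with the corresponding tight-spans. This is carried out twice, once for $\bar\cB(X,Y)$ to obtain (a) and (b), and once for $\cB(X,Y)$ to obtain (c) and (d).

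For (a), since $\bar w^D(e_x + e_y) = -D(x,y)$, the defining inequality $\scp{e_x + e_y}{f} \geq -\bar w^D(e_x + e_y)$ is exactly $f(x) + f(y) \geq D(x,y)$, so $\envelope{\bar w^D}{\bar\cB(X,Y)} = \Pi_D$. For (c), the same calculation handles the inequalities indexed by the points $e_x + e_y \in \cB(X,Y)$, and the extra points $2 e_z \in \cB(X,Y)$ contribute $\scp{2 e_z}{f} \geq -w^D(2 e_z) = 0$, i.e.\ $f(z) \geq 0$; intersecting yields $\envelope{w^D}{\cB(X,Y)} = \Pi_D \cap \RR^{X \cup Y}_{\geq 0} = P_D$.

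For (b) and (d), both $\bar\cB(X,Y)$ and $\cB(X,Y)$ consist of positive vectors, so the first half of Lemma~\ref{lem:minimal} already yields the containment of the tight-span in the set of minimal elements. For (d), the inequalities $f(z) \geq 0$ force $P_D$ into the nonnegative orthant, so the envelope is bounded from below and the second half of Lemma~\ref{lem:minimal} supplies the reverse inclusion $T_D = \tightspan{w^D}{\cB(X,Y)}$.

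The step requiring care is (b): in the ambient space $\RR^{X \cup Y}$ the envelope $\Pi_D$ is not bounded from below, because the direction $\sum_{x \in X} e_x - \sum_{y \in Y} e_y$ lies in its lineality, so Lemma~\ref{lem:minimal} does not apply verbatim. To close the argument one has to descend to the codimension-one ambient space consistent with the convention made at the start of Section~\ref{sec:point-configurations}, or else redo the second half of the proof of Lemma~\ref{lem:minimal} by hand for $\bar\cB(X,Y)$, verifying that any non-minimal $f \in \Pi_D$ lies on a ray of the envelope that is a positive combination of standard basis vectors. Once this is done, part (b) follows in parallel with (d).
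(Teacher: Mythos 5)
Your strategy is the same as the paper's: the paper proves this proposition with a one-sentence appeal to the argument of Proposition~\ref{prop:ts-dissimilarity}, and your parts (a), (c) and (d) supply exactly those details, correctly. You also deserve credit for isolating the one place where the analogy genuinely breaks, part (b), which the paper passes over in silence.

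The gap in (b) is real, however, and neither of your two repairs closes it, because the statement is false as literally printed. The lineality space of $\Pi_D$ is exactly the line $\RR u$ with $u=\sum_{x\in X}e_x-\sum_{y\in Y}e_y$ (the equations $r(x)+r(y)=0$ for all $x\in X$, $y\in Y$ force $r$ to be a constant on $X$ and the negative of that constant on $Y$); hence every nonempty face of $\Pi_D$ contains a translate of this line, $\Pi_D$ has no bounded faces at all, and the tight-span computed in $\RR^{X\cup Y}$ is empty. Meanwhile $\Theta_D\neq\emptyset$, and $\Theta_D$ is itself invariant under translation by $\RR u$: if $f$ is minimal then so is $f+tu$, since both $\Pi_D$ and the order $\preceq$ are preserved by this translation. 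Thus $\Theta_D$ is a nonempty union of lines and can never coincide with a union of bounded faces of any polyhedron. This rules out your second repair outright: no variant of the ray argument in Lemma~\ref{lem:minimal} can succeed, since there are minimal points but no bounded faces to put them in. Your first repair---passing to the linear span $V$ of $\bar\cB(X,Y)$, which the codimension-one convention of Section~\ref{sec:point-configurations} indeed forces---is the right move, but then the proposition must be restated: one gets $\Pi_D=\envelope{\bar w^D}{\bar\cB(X,Y)}\oplus\RR u$ (the envelope in $V$ is $\Pi_D\cap V$, not $\Pi_D$) and, correspondingly, $\Theta_D=\tightspan{\bar w^D}{\bar\cB(X,Y)}\oplus\RR u$. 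Proving this corrected version of (b) still requires relating coordinatewise minimality in $\RR^{X\cup Y}$ to the bounded faces of $\Pi_D\cap V$, which Lemma~\ref{lem:minimal} does not give verbatim (positivity and boundedness from below are basis-dependent notions, and inside $V$ neither hypothesis is available as stated); so (b) does not ``follow in parallel with (d)''. In fairness, the defect originates in the paper itself: its claim (b) is correct only modulo the lineality $\RR u$, which is how it is implicitly used later, for instance in the comparison with tropical polytopes and in the proof of Theorem~\ref{thm:tight-span:tree}.
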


It was shown in \cite[Lemma~22]{MR2054977} that $\Theta_D$ is piecewise"=linear isomorphic to the tropical polytope (see, e.g., Develin and Sturmfels \cite{MR2054977}) with vertex set $\smallSetOf{(D(x,y))_{y\in Y}}{x\in X}$. Similarly, using  a proof like that for Lemma~\ref{lem:ts-sym-div}, we have:

\begin{lem}\label{lem:ts-map-div}
Let $D: X\times Y\to \RR$ and $D_+$ defined by $D_+(x,y)=\max(0,D(x,y))$ for all $x\in X$, $y\in Y$. Then $T_D=T_{D_+}$.
\end{lem}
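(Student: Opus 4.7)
The plan is to mimic the argument for Lemma~\ref{lem:ts-sym-div} almost verbatim, using the nonnegativity clause built into the definition $P_D=\Pi_D\cap\RR^{X\cup Y}_{\geq 0}$ as a replacement for the condition ``$D(x,x)=0$'' that was used in the symmetric setting. It suffices to prove $P_D=P_{D_+}$, because the set of minimal elements is determined by the polyhedron, so this equality will immediately give $T_D=T_{D_+}$.

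For the inclusion $P_{D_+}\subseteq P_D$, I would simply observe that $D_+\geq D$ pointwise. Any $f$ with $f(x)+f(y)\geq D_+(x,y)$ therefore also satisfies $f(x)+f(y)\geq D(x,y)$, which together with the nonnegativity of $f$ shows $f\in P_D$. This is the easy direction and requires no further computation.

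For the reverse inclusion $P_D\subseteq P_{D_+}$, I would take $f\in P_D$. By definition, $f\geq 0$ on $X\cup Y$, and hence $f(x)+f(y)\geq 0$ for every $x\in X$ and $y\in Y$. Combining this with the defining inequality $f(x)+f(y)\geq D(x,y)$ inherited from $\Pi_D$, I obtain
\[
f(x)+f(y)\ \geq\ \max\bigl(0,D(x,y)\bigr)\ =\ D_+(x,y),
\]
so $f\in\Pi_{D_+}$. Since $f\geq 0$ is already assumed, in fact $f\in P_{D_+}$. Combining both inclusions yields $P_D=P_{D_+}$, and passing to minimal elements completes the proof.

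There is no real obstacle here: the only subtle point is that, unlike in Lemma~\ref{lem:ts-sym-div} where $2f(x)\geq D(x,x)=0$ had to be used to deduce $f\geq 0$, in the present setting the nonnegativity is built into the definition of $P_D$ from the start. This is precisely the reason the argument goes through for completely arbitrary maps $D:X\times Y\to\RR$ without any additional hypothesis.
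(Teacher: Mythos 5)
Your proof is correct and is essentially the argument the paper intends: the paper proves this lemma only by the remark that it follows ``using a proof like that for Lemma~\ref{lem:ts-sym-div}'', and your write-up is exactly that adaptation, with the nonnegativity of $f$ now coming from the clause $P_D=\Pi_D\cap\RR^{X\cup Y}_{\geq 0}$ instead of from $2f(x)\geq D(x,x)=0$. Nothing further is needed.
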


Of particular interest is the case where $Y$ is a disjoint copy of $X$, that is, $D$ is a (not necessarily symmetric) function from $X\times X$ to $\RR$. We denote the two distinct copies of $X$ by $X_l$ and $X_r$ and set $X_d=X_l\cup X_r$. If in this case $D\geq 0$ and $D(x,x)=0$ for all $x\in X$ then $D$ is called a \emph{directed distance}; and if $D$ also satisfies the triangle inequality, it is called a \emph{directed metric}. These were considered by Hirai and Koichi~\cite{HiraiKoichi}. In fact, our definitions of $\Pi_d$, $\Theta_D$, $P_D$ and $T_D$ are generalisations of their definitions.

As in the case of symmetric maps, it can be deduced from Lemma~\ref{lem:ts-shift} that a map $D:X\times X \to \RR$ can be transformed  into a map with $D(x,x)=0$ for all $x\in X$ by shifting it by a vector $v\in\RR^{X_d}$ defined by $v(x)=1/2 D(x,x)$ for $x\in X_d$ (here $x$ is identified with its copy in $X$). Together with Lemma~\ref{lem:ts-map-div} this leads to the following corollary.

\begin{cor}\label{cor:ts-map-div}
For each map $D:X\times X \to \RR$ there exists a directed distance $D'$ on $X$ and some $ v\in \RR^{X_d}$ such that $T_D=T_{D'}+v$.
\end{cor}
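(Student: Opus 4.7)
The plan is to follow the two-step recipe from Corollary~\ref{cor:ts-sym-div}: first shift $D$ so that its diagonal vanishes, then truncate away any remaining negative values.

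Given $D : X \times X \to \RR$, I will take $v \in \RR^{X_d}$ to be the vector $v(z) = \tfrac{1}{2}D(z, z)$ (identifying $z \in X_d$ with its copy in $X$) and define the auxiliary map $\tilde D : X \times X \to \RR$ by $\tilde D(x, y) = D(x, y) - \tfrac{1}{2}(D(x, x) + D(y, y))$, so that $\tilde D(x, x) = 0$ for all $x \in X$. The first step is to establish the shift identity $T_D = T_{\tilde D} + v$, the non-symmetric analogue of Proposition~\ref{prop:ts-sym-div}. On the cross elements $e_{x_l} + e_{y_r} \in \bar\cB(X_d) \subseteq \cB(X_d)$ the computation is immediate: both $w^{\tilde D}(e_{x_l}+e_{y_r})$ and $w^D(e_{x_l}+e_{y_r}) + \scp{e_{x_l}+e_{y_r}}{v}$ reduce to $-\tilde D(x, y)$, so after checking the matching behaviour on the remaining elements of $\cB(X_d)$, Lemma~\ref{lem:ts-shift} delivers the desired translation formula.

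For the second step, since $\tilde D(x, x) = 0$, the pointwise positive part $D' := (\tilde D)_+$ satisfies $D' \geq 0$ and $D'(x, x) = \max(0, 0) = 0$, so $D'$ is a directed distance on $X$. Applying Lemma~\ref{lem:ts-map-div} to $\tilde D$ yields $T_{\tilde D} = T_{D'}$, and combining this with the shift identity from the first step produces $T_D = T_{D'} + v$, completing the argument.

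The main obstacle is justifying the first step on the diagonal elements $2e_z \in \cB(X_d)$. In the symmetric setting the corresponding elements $2e_x \in \cA(X)$ carry the natural weight $-D(x,x)$, which the shift by $v$ transforms cleanly into $-\tilde D(x,x) = 0$, and this is what makes Proposition~\ref{prop:ts-sym-div} go through transparently. In the non-symmetric case, however, the weights on $2e_z$ are fixed by the convention $w^D(2e_z) = 0$ independently of $D$, so one has to track carefully how the shift interacts with this convention before Lemma~\ref{lem:ts-shift} can be invoked to produce precisely $T_{\tilde D}$; once this bookkeeping is settled, the rest of the argument is just the composition of Lemmas~\ref{lem:ts-shift} and \ref{lem:ts-map-div}.
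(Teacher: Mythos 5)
Your proposal follows exactly the paper's own derivation (the paragraph preceding the corollary): first shift the diagonal away via Lemma~\ref{lem:ts-shift}, then take the positive part via Lemma~\ref{lem:ts-map-div}. The problem is that the step you defer as ``bookkeeping'' is not bookkeeping but a genuine gap, and it cannot be closed. For the shift identity $T_D=T_{\tilde D}+v$ you would need $w^D+\scp{\cdot}{v}$ to agree with $w^{\tilde D}$ on \emph{all} of $\cB(X)$; on the cross points it does, but on a diagonal point $2e_z$, $z\in X_d$, the convention $w^{E}(2e_z)=0$ for every map $E$ forces
\[
w^D(2e_z)+\scp{2e_z}{v}\;=\;2v(z)\;=\;D(z,z)
\qquad\text{while}\qquad
w^{\tilde D}(2e_z)\;=\;0\,,
\]
so the two weight functions differ whenever some $D(z,z)\neq0$ --- which is precisely the only case in which a shift is needed at all. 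Lemma~\ref{lem:ts-shift} therefore yields $T_D=\tightspan{w'}{\cB(X)}+v$ for a weight function $w'$ that is \emph{not} of the form $w^{E}$ for any map $E$: its envelope imposes $f(z)\geq-\tfrac12 D(z,z)$ where $P_{\tilde D}$ imposes $f(z)\geq0$, and the desired equality $\tightspan{w'}{\cB(X)}=T_{\tilde D}$ fails in general.

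Moreover, no alternative argument can rescue your proof, because the statement itself fails once the diagonal is nonzero. Take $X=\{x\}$ and $D(x,x)=c>0$. Then $P_D=\smallSetOf{(a,b)\in\RR^2_{\geq0}}{a+b\geq c}$, so $T_D$ is the entire segment from $(0,c)$ to $(c,0)$; but the only directed distance on a one-element set is $D'\equiv0$ (directed distances vanish on the diagonal), whose tight-span is the single point $(0,0)$, so $T_D\neq T_{D'}+v$ for every $v$. Your construction indeed returns $\tilde D\equiv 0$, $v=(c/2,c/2)$, and $T_{(\tilde D)_+}+v=\{(c/2,c/2)\}\neq T_D$. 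In short, you have faithfully reproduced the paper's argument including its flaw: what does survive is the shift identity for $\Theta$ (since $\bar\cB(X)$ contains no diagonal points, $\Theta_D=\Theta_{\tilde D}+v$ holds, though $\tilde D$ need not be nonnegative), and the identity $T_D=T_{D_+}$ of Lemma~\ref{lem:ts-map-div} when the diagonal of $D$ already vanishes; but these two steps cannot be composed to prove the corollary as stated, and your write-up should not assert that the remaining verification ``delivers the desired translation formula'' when it provably does not.
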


To a directed distance $D$ on $X$ we associate a (symmetric) distance $D^u :X_d\times X_d \to \RR_{\geq 0}$ by setting
\[
D^u(x,y)=\begin{cases} D(x,y),&\text{if }x\in X_l \text{ and } y\in X_r\text{ or }x\in X_r \text{ and } y\in X_l\,,\\
              0,&\text{else}\,.\end{cases}
\]

It follows that $P_{D^u}=P_{D}$ and $T_{D^u}=T_{D}$. So tight-spans of undirected distances are just tight-spans of special directed distances.


In \cite{HiraiKoichi}, Hirai and Koichi give explicit conditions on $D$ for when $T_D$ and $\Theta_D$ are trees. We now give new and we feel somewhat conceptually simpler proofs of these results using point configurations.

We begin by recalling some basic definitions from \cite[Section~3]{HiraiKoichi}. An \emph{oriented tree} $\Gamma=(V(\Gamma),E(\Gamma))$ is a directed graph whose underlying undirected graph is a tree. For an oriented tree $\Gamma$ and an edge-length function $\alpha:E(\Gamma)\to \RR_{\geq0}$, we define a directed distance $D_{\Gamma,\alpha}$ on $V(\Gamma)$ by setting $D_{\Gamma,\alpha}(x,y)=\sum_{e\in \arrow P(x,y)}\alpha(e)$ for all $x,y\in V(\Gamma)$, where $\arrow P(x,y)$ is the set of all edges on the unique (undirected) path from $x$ to $y$ that are directed from $x$ to $y$. For $A,B\subseteq V(\Gamma)$ we set $D_{\Gamma,\alpha}(A,B)=\min\smallSetOf{D_{\Gamma,\alpha}(a,b)}{a\in A,b\in B}$. For an undirected distance $D:X\times X\to \RR$ and a family $\cF=\smallSetOf{F_x}{x\in X}$ of subtrees of $\Gamma$, we say that $(\Gamma,\alpha,\cF)$ is an \emph{oriented tree realisation} of $D$ if
\[
D(x,y)=D_{\Gamma,\alpha}(F_x,F_y)\text{ for all }x,y\in X\,.
\]

Now, let $S=(A,B)$ be a directed partial split of $X$. We can define a directed distance $D_S$ on $X$ by setting
\[
D_S(x,y)=\begin{cases}
1,&\text{if $x\in A$ and $y\in B$,}\\
0,&\text{else.}
\end{cases}
\]
Note that $D_S$ is a directed metric if and only if $S$ is a directed split. Also note that the oriented tree $\Gamma$ consisting of a single edge $(v,w)$ with weight $\lambda$ and
\[
F_x=\begin{cases}
\{v\}&\text{if }x\in A,\\
\{w\}&\text{if }x\in B,\\
\Gamma&\text{else},
\end{cases}
\]
for all $x\in X$, is an oriented tree realisation of $D_S$.

Now let $D$ be an arbitrary directed distance with oriented tree realisation $(\Gamma,\alpha,\cF)$. For each $e=(a,b)\in E(\Gamma)$ we define a directed partial split $S_e=(A_e,B_e)$ of $X$, where $A_e$ is the set of all $x \in X$ whose subtree $F_x\in \cF$ is entirely contained in the same connected component of $\Gamma\setminus e$ as $a$ and $B_e$ the same for $b$. (Here $\Gamma\setminus e$ denotes the forest obtained from $\Gamma$ by deleting the edge $e$.) It now follows that $D=\sum_{e\in E(\Gamma)} \alpha(e) D_{S_e}$. 

We can now show the following:
\begin{prop}\label{prop:oriented-tree-realisation}
Let $\cS$ be a set of directed partial splits, $\alpha:\cS\to \RR_{>0}$ a function, and $D$ the directed distance defined by
\[
D=\sum_{S\in\cS} \alpha(S) D_{S}\,.
\]
Then we have:
\begin{enumerate}
\item \label{prop:oriented-tree-realisation:comp} $D$ has an oriented tree realisation $(\Gamma,\alpha,\cF)$ where each $F\in \cF$ is a directed path if and only if $\cS$ is compatible.
\item\label{prop:oriented-tree-realisation:strong-comp} $D$ has an oriented tree realisation $(\Gamma,\alpha,\cF)$ such that $\Gamma$ is a directed path if and only if $\cS$ is strongly compatible.
\end{enumerate}
\end{prop}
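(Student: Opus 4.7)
The plan is to prove both parts by establishing each direction, relying crucially on the identity
\[
D = \sum_{e \in E(\Gamma)} \alpha(e)\, D_{S_e}
\]
derived in the paragraph preceding the proposition. This reduces the forward (``only if'') directions to a combinatorial analysis of the family of edge-splits $\{S_e \colon e \in E(\Gamma)\}$, and the reverse (``if'') directions to explicit constructions of an oriented tree realisation.

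For the forward direction of (a), I would fix two distinct edges $e, e' \in E(\Gamma)$. Removing them from the underlying undirected tree produces three connected components $C_1, C_2, C_3$, with $e$ joining $C_1, C_2$ and $e'$ joining $C_2, C_3$ (after relabelling). For each $x \in X$ the subtree $F_x$, being a directed path, either lies entirely inside one component, in which case $x$ falls into at most one of $A_e, B_e, A_{e'}, B_{e'}$ on each side, or uses one of $e, e'$, in which case $x$ does not appear in the corresponding side-pair. Distinguishing cases based on how the orientations of $e$ and $e'$ align with the $C_1$-to-$C_3$ direction, one checks that exactly one of the four conditions in~\eqref{cond:dir-split:compatible} holds for $S_e$ and $S_{e'}$, giving compatibility of $\cS$.

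For the reverse direction of (a), I would proceed by induction on $|\cS|$. The base case $|\cS|=1$ is handled by the explicit single-edge oriented tree realisation of $D_S$ given just before the proposition. For the inductive step, pick an extremal $S_0 = (A_0, B_0) \in \cS$ (for instance, one for which $A_0$ is minimal under inclusion among the $A$-parts of elements of $\cS$), apply induction to $\cS \setminus \{S_0\}$ to obtain a realisation $(\Gamma', \alpha', \cF')$ of $\sum_{S \neq S_0} \alpha(S)\, D_S$, and then attach a new edge $e_0$ with weight $\alpha(S_0)$ to $\Gamma'$ at a location dictated by which of the four compatibility conditions in~\eqref{cond:dir-split:compatible} holds between $S_0$ and each remaining $S$. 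Each $F_x$ then extends in at most one natural way, and compatibility forces the result to still be a directed path. The main obstacle is precisely this step: certifying that the local subdivision rule yields a globally consistent oriented tree with each $F_x$ remaining a directed path requires keeping careful bookkeeping of which component of $\Gamma' \setminus e_0$ each element of $X$ must sit in, which is dictated by combining the four-case analyses across all $S \in \cS \setminus \{S_0\}$.

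For (b), the forward direction is straightforward: enumerating the edges of the directed path $\Gamma$ from source to sink as $e_1, \ldots, e_l$, a directed sub-path $F_x$ lying on the ``$A$-side'' of $e_i$ necessarily lies on the ``$A$-side'' of $e_{i+1}$ as well, so $A_{e_i} \subseteq A_{e_{i+1}}$ and dually $B_{e_i} \supseteq B_{e_{i+1}}$, which is the defining property of strong compatibility. For the reverse, given a strongly compatible ordering $(A_1, B_1), \ldots, (A_l, B_l)$ with weights $\alpha_i = \alpha(S_i)$, I would let $\Gamma$ be the directed path $v_0 \to v_1 \to \cdots \to v_l$ with edge weights $\alpha_i$, and for each $x \in X$ define $F_x$ to be the subpath from $v_{i(x)}$ to $v_{j(x)}$, where $i(x) = \max\{i \colon x \notin A_i\}$ (or $0$ if no such $i$ exists) and $j(x) = \min\{j \colon x \notin B_j\} - 1$ (or $l$ if no such $j$ exists). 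The nesting supplied by strong compatibility makes this interval well-defined, and a direct check recovers $S_{e_i} = (A_i, B_i)$ for each $i$, concluding the proof.
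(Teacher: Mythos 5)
Your forward directions are sound and essentially reproduce the paper's argument: for (a) the paper also analyses a pair of edges $e,f$, splitting into the case where they point the same way along the path joining them (giving the nested condition of \eqref{cond:dir-split:compatible}) and the case where they point opposite ways, where the hypothesis that each $F\in\cF$ is a directed path is used exactly as you use it (no $F$ can contain both edges), yielding one of the complement conditions; and for (b) your nesting observation $A_{e_i}\subseteq A_{e_{i+1}}$, $B_{e_i}\supseteq B_{e_{i+1}}$ is the whole content of what the paper dismisses as ``follows directly from the definition''.

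The genuine problem is in your explicit construction for the reverse direction of (b). Your endpoint formulas $i(x)=\max\{i\colon x\notin A_i\}$ and $j(x)=\min\{j\colon x\notin B_j\}-1$ are the complement-based ones, and they agree with the correct endpoints only when every $S_i$ is a full directed split, i.e.\ $A_i\cup B_i=X$ (in which case each $F_x$ collapses to a single vertex). But the proposition concerns directed \emph{partial} splits, and partiality is exactly what breaks your formulas: if $x\notin A_i\cup B_i$ for all $i$, then $i(x)=l$ and $j(x)=0$, so there is no subpath from $v_{i(x)}$ to $v_{j(x)}$ at all; such an $x$ in fact needs $F_x=\Gamma$, since $x\notin A_i\cup B_i$ forces $F_x$ to contain the edge $e_i$ for every $i$. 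More generally, where your intervals are defined they produce $S_{e_i}=(X\setminus B_i,\,X\setminus A_i)$ rather than $(A_i,B_i)$, so the realised distance overshoots $D$ whenever some split is genuinely partial. The repair is to use membership rather than non-membership: take the left endpoint to be $\max\{i\colon x\in B_i\}$ (or $0$ if none) and the right endpoint to be $\min\{j\colon x\in A_j\}-1$ (or $l$ if none); strong compatibility together with $A_i\cap B_i=\emptyset$ then makes each interval well defined, and a direct check recovers $S_{e_i}=(A_i,B_i)$. Separately, your reverse direction of (a) remains a sketch: you yourself flag the global consistency of the inductive edge-attachment as the main obstacle and never discharge it. In fairness, the paper does no better here --- it merely asserts that a realisation ``can be constructed'' --- but as written your proposal does not prove that direction either.
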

\begin{proof}
\begin{enumerate}
\item Let $e$ and $f$ be two edges of $\Gamma$ and  $S_e$ and $S_f$ the associated  directed partial splits. We have to show that $S_e$ and $S_f$ are compatible. Suppose first that in any undirected path in $\Gamma$ containing $e$ and $f$ these two edges are directed in the same direction. (Since $\Gamma$ is a tree, this is the case if there exists some undirected path with this property.) Then (possibly after exchanging $e$ and $f$) we have $A_e\subseteq A_f$ and $B_f\subseteq B_e$, which implies that $S_e$ and $S_f$ are compatible by the first condition of \eqref{cond:dir-split:compatible}.

Now, suppose that in any undirected path in $\Gamma$ containing $e$ and $f$ these two edges are directed in the different directions. Because each $F\in \cF$ is a directed path, this implies that there does not exist an $F\in \cF$ that contains $e$ as well as $f$. Hence, we get (again after a possible exchange of $e$ and $f$) $X\setminus A_e\subseteq A_f$ and $B_e\subseteq X\setminus B_f$, implying that $S_e$ and $S_f$ are compatible by the third condition of \eqref{cond:dir-split:compatible}. On the other hand, given a compatible set of directed partial splits of $X$, we can construct a corresponding tree $\Gamma$ with a family of subtrees $\cF$ that are directed paths such that  $(\Gamma,\alpha,\cF)$ is an oriented tree realisation of $D$.
\item Follows directly from the definition.
\end{enumerate}
\end{proof}

Now we can state and prove the tree-like theorem.

\begin{thm}[Theorems~3.1 and~3.2 in~\cite{HiraiKoichi}]\label{thm:tight-span:tree}

Let $D$ be a directed distance on $X$. Then we have:
\begin{enumerate}
\item $\Theta_D$ is a tree if and only if $D$ has an oriented tree realisation $(\Gamma,\alpha,\cF)$ such that each $F\in \cF$ is a directed path.\label{thm:tight-span:tree:theta}
\item $T_D$ is a tree if and only if $D$ has an oriented tree realisation $(\Gamma,\alpha,\cF)$ such that $\Gamma$ is a directed path.
\end{enumerate}
\end{thm}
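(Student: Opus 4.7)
The plan is to use the correspondence between tight-spans of $D$ and tight-spans of the point configurations $\bar\cB(X)$ and $\cB(X)$ provided by Proposition~\ref{prop:ts-directed}, together with the characterisation of one-dimensional tight-spans from Proposition~\ref{prop:1-dim:comp}, to translate the topological condition that $\Theta_D$ (resp.~$T_D$) be a tree into a combinatorial condition on (strongly) compatible directed partial splits of $X$. Proposition~\ref{prop:oriented-tree-realisation} will then convert this combinatorial condition into the existence of an oriented tree realisation of the required form.

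For part (a), first apply Proposition~\ref{prop:ts-directed}(b) to identify $\Theta_D$ with $\tightspan{\bar w^D}{\bar\cB(X)}$. By Proposition~\ref{prop:1-dim:comp}, this is a tree if and only if $\subdivision{\bar w^D}{\bar\cB(X)}$ is a common refinement of a compatible set $\cT$ of splits of $\bar\cB(X)$. Corollary~\ref{cor:splits-axx:compatible} identifies such a $\cT$ with a compatible set $\cS$ of directed partial splits of $X$ via $T = T_S$, and Corollary~\ref{cor:split-decomposition} then yields a decomposition $\bar w^D = \sum_{S\in\cS}\alpha(S)\,w_{T_S}$ with $\alpha(S) > 0$. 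A short computation (using Lemma~\ref{lem:ts-shift} to absorb any inessential shift) identifies $w_{T_S}$ with $\bar w^{D_S}$, so that $D = \sum_{S\in\cS}\alpha(S)\,D_S$, and Proposition~\ref{prop:oriented-tree-realisation}(a) produces the desired oriented tree realisation in which each $F\in\cF$ is a directed path. The converse reverses these steps: the discussion preceding Proposition~\ref{prop:oriented-tree-realisation} turns such a realisation into a decomposition $D = \sum_{S\in\cS}\alpha(S)\,D_S$ with $\cS$ compatible, whence $\subdivision{\bar w^D}{\bar\cB(X)}$ is the common refinement of $\{T_S : S\in\cS\}$, and Proposition~\ref{prop:1-dim:comp} shows $\Theta_D$ is a tree.

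For part (b) the strategy is the same, with $\bar\cB(X)$ replaced by $\cB(X)$ and Proposition~\ref{prop:oriented-tree-realisation}(b) used at the end. The missing piece---which I expect to be the main obstacle---is an analogue of Corollary~\ref{cor:splits-axx:compatible} for $\cB(X)$: I would first show that splits of $\cB(X)$ are still indexed by directed partial splits of $X$, and then that a set of such splits is compatible in $\cB(X)$ precisely when the underlying directed partial splits are strongly compatible. Heuristically, the extra vertex-type points $2e_x \in \cB(X)\setminus\bar\cB(X)$ force the associated split hyperplanes to admit a linear ordering of their defining sets $A_1\subseteq A_2\subseteq\cdots$ and $B_1\supseteq B_2\supseteq\cdots$, which is exactly the strong compatibility condition. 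Once this refined correspondence is in place, the rest of the argument is formally identical to part (a), and the whole proof assembles by routine application of the machinery developed in Sections~\ref{sec:point-configurations} and~\ref{sec:splits}.
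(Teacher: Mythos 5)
Your overall strategy is exactly the paper's: translate ``tree'' via Proposition~\ref{prop:ts-directed} and Proposition~\ref{prop:1-dim:comp} into a statement about compatible splits of $\bar\cB(X)$ (resp.\ $\cB(X)$), pass to directed partial splits of $X$, decompose via Corollary~\ref{cor:split-decomposition}, and finish with Proposition~\ref{prop:oriented-tree-realisation}. However, in the direction ``$\Theta_D$ a tree $\Rightarrow$ realisation'' of part (a) there is a genuine gap: Corollary~\ref{cor:splits-axx:compatible} does \emph{not} identify an arbitrary compatible set $\cT$ of splits of $\bar\cB(X)$ with a set of directed partial splits. As the paper notes immediately before that corollary, not every split of $\bar\cB(X)$ is of the form $T_S$: a split of $\bar\cB(X)$ is given by subsets $A\subseteq X_l$, $B\subseteq X_r$ that need not be disjoint when regarded as subsets of $X$. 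The missing step is to use the hypothesis that $D$ is a directed distance: since $\bar w^D(e_{x_l}+e_{x_r})=-D(x,x)=0$ for all $x\in X$, each $T\in\cT$ occurring in $\subdivision{\bar w^D}{\bar\cB(X)}$ must be defined by two disjoint subsets of $X$, and only then does each $T$ equal $T_S$ for a directed partial split $S$, so that Corollary~\ref{cor:splits-axx:compatible} and Corollary~\ref{cor:split-decomposition} can be applied.

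For part (b) you correctly isolate the missing lemma, but your proposed statement of it is not quite right: the splits of $\cB(X)$ are \emph{not} indexed by directed partial splits of $X$; they are given by partial splits of the doubled set $X_d$, most of which mix the two copies of $X$. The paper's argument again exploits the support of $w^D$: since $w^D(e_x+e_y)$ can be non-zero only for $x\in X_l$, $y\in X_r$, the only splits that can occur in $\subdivision{w^D}{\cB(X)}$ are those defined by disjoint $A\subseteq X_l$ and $B\subseteq X_r$, and these are in bijection with directed partial splits of $X$. Your heuristic about the points $2e_x$ is then the right mechanism for the second half: two such splits of $\cB(X)$, defined by $(A,B)$ and $(C,E)$, are compatible if and only if $A\subseteq C$ and $B\supseteq E$, or $A\supseteq C$ and $B\subseteq E$ --- the complement-type conditions from Proposition~\ref{prop:p_o_s-compatible} are destroyed because $\conv\cB(X)$ is the whole simplex rather than a product of simplices --- and this is exactly pairwise strong compatibility. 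With these two corrections your outline coincides with the paper's proof.
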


\begin{proof}
\begin{enumerate}
\item
Let $D$ be a directed distance on $X$ such that $D$ has on oriented tree realisation $(\Gamma,\alpha,\cF)$ such that each $F\in \cF$ is a directed path. By Proposition~\ref{prop:oriented-tree-realisation} \eqref{prop:oriented-tree-realisation:comp}, this implies that there exists a set $\cS$ of directed partial splits of $X$ such that
\[
D=\sum_{S\in\cS} \alpha(S) D_{S}\,.
\]
For each $S\in \cS$ and $a\in \bar \cB(X)$ we have 
\[
\bar w^{D_S}(a)=\begin{cases}
-1,&\text{if $a=e_x+e_y$ and $x\in A$ and $y\in B$,}\\
0,&\text{else,}
\end{cases}
\]
and so it is immediately seen that $\bar w^{D_S}$ defines the split $T_S$ of $\bar\cB(X)$. By Corollary~\ref{cor:splits-axx:compatible}, the set $\smallSetOf{T_S}{S\in\cS}$ is a compatible set of splits for $\bar \cB(X)$, and so the subdivision $\subdivision{\bar w^{D}}{\bar\cB(X)}$ is a common refinement of compatible splits. Proposition~\ref{prop:1-dim:comp} now implies that $\Theta_D=\tightspan{\bar w^{D}}{\bar\cB(X)}$ is a tree.

Conversely, let $D$ be a directed distance on $X$ such that $\Theta_D$ is a tree. By Proposition~\ref{prop:1-dim:comp} there exists a compatible set $\cT$ of splits of $\bar\cB(X)$ such that $\subdivision{\bar w^{D}}{\bar\cB(X)}$ is the common refinement of all splits in $\cT$. Since $D$ is a directed distance, we have $w^D(e_x+e_x)=0$ for all $x\in X$, which implies that each $T\in\cT$ is defined by two disjoint subsets of $X$. Hence there exists a directed partial split $S$ with $T=T_S$. Let $\cS$ be the set of all such splits, which is compatible by Corollary~\ref{cor:splits-axx:compatible}. By Corollary~\ref{cor:split-decomposition}, there exists $\alpha_S\in \RR_{\geq 0}$ such that $\bar w^D=\sum_{T\in \cT} \alpha_S w^{D_S}$, so $D=\sum_{\cS\in S} \alpha_S D_S$ and the claim now follows from Proposition~\ref{prop:oriented-tree-realisation} \eqref{prop:oriented-tree-realisation:comp}.
\item The splits of the point configuration $\cB(X)$ are given by partial splits of the set $X_d$. However, given a directed distance $D$, by definition, the value $w^D(e_x+e_y)$ can only be non-zero if $x\in X_l$ and $y\in X_r$. This implies that the only possible partial splits $\{A,B\}$ of $X_d$ that may occur are those with $A\subseteq X_l$ and $B\subseteq X_r$ (or vice versa) and $A\cap B=\emptyset$ (where $A$ and $B$ are considered as subsets of $X$). The splits of this type are in bijection with directed partial splits $(A,B)$ of $X$.

Now, given two such directed partial splits $(A,B)$, $(C,D)$ of $X$, the corresponding splits of $\cB(X)$ are compatible if and only if
\[
 A\subseteq C\text{ and } B\supseteq D,\text{ or }A\supseteq C\text{ and }B\subseteq D\,.
\]
Proposition~\ref{prop:oriented-tree-realisation} \eqref{prop:oriented-tree-realisation:strong-comp} now implies the claim in a similar way as Proposition~\ref{prop:oriented-tree-realisation} \eqref{prop:oriented-tree-realisation:comp} implies Part~\eqref{thm:tight-span:tree:theta}.
\end{enumerate}
\end{proof}

In the case where $D$ is a directed metric, that is, satisfies the triangle inequality, it is obvious that in each directed tree realisation $(\Gamma,\alpha,\cF)$  all $F_x$ are single vertices. In this special case, given a directed tree $\Gamma$, an edge length function $\alpha$ and a map $\phi: X\to V(\Gamma)$, we also call $(\Gamma,\alpha,\phi)$ an \emph{oriented tree realisation} of $D$ if and only if  $(\Gamma,\alpha,\smallSetOf{\phi(x)}{x\in X})$ is an oriented tree realisation of $D$. Note that our approach to oriented tree realisations is somewhat related to those taken by Hakimi and Patrinos \cite{MR0414405} and by Semple and Steel \cite{MR1722236}. However it differs as we take only one 
edge for each undirected edge of a tree rather than two.

\begin{cor}
Let $D$ be a directed metric on $X$. Then we have:
\begin{enumerate}
\item $\Theta_D$ is a tree if and only if $D$ has an oriented tree realisation.
\item $T_D$ is a tree if and only if $D$ has an oriented tree realisation $(\Gamma,\alpha,\phi)$ such that $\Gamma$ is a directed path.
\end{enumerate}
\end{cor}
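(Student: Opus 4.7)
The plan is to derive both parts from Theorem~\ref{thm:tight-span:tree} by reducing to the case where each $F_x$ in the oriented tree realisation is a single vertex; with this reduction in hand, the two statements of the corollary become direct specialisations of parts~(a) and~(b) of that theorem.

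For the ``$\Leftarrow$'' direction in both (a) and (b) the argument is immediate. Given an oriented tree realisation $(\Gamma,\alpha,\phi)$, by definition $(\Gamma,\alpha,\{\{\phi(x)\}:x\in X\})$ is an oriented tree realisation of $D$; each $F_x=\{\phi(x)\}$ is a (trivial) directed path, so Theorem~\ref{thm:tight-span:tree}(a) gives that $\Theta_D$ is a tree, while in case (b) the additional hypothesis that $\Gamma$ is a directed path yields that $T_D$ is a tree via Theorem~\ref{thm:tight-span:tree}(b).

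For the ``$\Rightarrow$'' direction I would begin with an oriented tree realisation $(\Gamma,\alpha,\cF)$ produced by Theorem~\ref{thm:tight-span:tree}, in which each $F_x$ is a directed path (and, for~(b), $\Gamma$ itself is a directed path). The aim is to show that each $F_x$ can be collapsed to a single vertex without losing the property of realising $D$ and without destroying the structural hypothesis on $\Gamma$ in case~(b). The key step uses the triangle inequality: suppose some $F_x$ contains an edge $v_i\to v_{i+1}$ of positive weight $\lambda$, and that removing the underlying undirected edge cuts $\Gamma$ into components $C_-$ (containing $v_1,\ldots,v_i$) and $C_+$ (containing $v_{i+1},\ldots,v_k$). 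Then, assuming some $F_z$ has a vertex in $C_-$ and some $F_y$ has a vertex in $C_+$, the calculation of $D_{\Gamma,\alpha}$ (exploiting that the edges of the path $F_x$ all point forwards, so that the edge $v_i\to v_{i+1}$ contributes on every directed path from $C_-$ to $C_+$ passing through it) yields
\[
D(z,y) \;=\; D(z,x)+\lambda+D(x,y),
\]
contradicting $D(z,y)\le D(z,x)+D(x,y)$. Hence for every edge of every $F_x$ at least one of the two components is unused by all other subtrees, so we may prune that side of $\Gamma$ and shrink $F_x$ to the vertex on the remaining side, obtaining a realisation with a strictly smaller $F_x$. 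Iterating produces $(\Gamma',\alpha',\phi)$; pruning preserves the property that $\Gamma$ is a directed path, giving~(b).

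The main obstacle I expect is the careful bookkeeping in the triangle-inequality step: one must verify that the identity $D(z,y)=D(z,x)+\lambda+D(x,y)$ really holds in every configuration of $F_z, F_y$ relative to $F_x$ (including the cases where subtrees overlap with $F_x$ at one of the vertices $v_i, v_{i+1}$), and one must check that after pruning, distances among all remaining points are unchanged so that the new triple is still a valid realisation. Once this local reduction is established, the remainder of the proof is purely a citation of Theorem~\ref{thm:tight-span:tree}.
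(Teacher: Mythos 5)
You follow the same route as the paper---reduce the corollary to Theorem~\ref{thm:tight-span:tree} by showing that for a directed metric the subtrees $F_x$ may be taken to be single vertices---and in fact the paper offers nothing beyond the remark preceding the corollary that this reduction is ``obvious''. Your attempt to actually prove it is therefore welcome, but the central step is false as stated. Take $\Gamma$ to be the single directed edge $v_1\to v_2$ with weight $1$, and put $F_x=F_z=\Gamma$, $F_y=\{v_2\}$. This is an oriented tree realisation, with every $F$ a directed path, of the directed distance $D\equiv 0$ on $\{x,y,z\}$, which certainly is a directed metric. The unique edge of $F_x$ has positive weight, $F_z$ has a vertex in $C_-=\{v_1\}$ and $F_y$ has a vertex in $C_+=\{v_2\}$, yet $D$ violates no triangle inequality, the claimed identity $D(z,y)=D(z,x)+\lambda+D(x,y)$ here reads $0=1$, and neither component is unused by the other subtrees. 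So under your hypothesis (the subtrees merely \emph{meet} $C_-$ and $C_+$) no contradiction can be derived, and the assertion on which your pruning iteration rests---``for every edge of every $F_x$ at least one of the two components is unused by all other subtrees''---is simply wrong. The culprit is exactly the case you flagged but did not resolve: subtrees straddling the cut edge.

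The step can be repaired, but both its hypothesis and its conclusion must change. Assume $F_z\subseteq C_-$ and $F_y\subseteq C_+$ (containment, not intersection; note this forces $z,y\not=x$). Then every path from a vertex of $F_z$ to a vertex of $F_y$ crosses $e$ in its forward direction, and since $v_i,v_{i+1}\in F_x$ one obtains $D(z,y)\geq D(z,x)+\lambda+D(x,y)$; only this inequality is available, not your equality (for $v_1\to v_2\to v_3$ with unit weights, $F_x=\Gamma$, $F_z=\{v_1\}$, $F_y=\{v_3\}$, the two sides are $2$ and $1$), but it already contradicts the triangle inequality because $\lambda>0$. The conclusion is then weaker than yours: no positive-weight edge of $F_x$ has other subtrees contained entirely in \emph{both} of its sides; other subtrees may still straddle $e$, and any straddling directed path necessarily contains $e$ itself. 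Consequently, when you prune the side (say $C_+$) containing no whole other subtree, you also truncate the straddlers to $F_w\cap C_-$, and you must verify that all the minima defining $D$ are still attained within $C_-$; this does hold, because from any vertex of a straddler lying in $C_+$ one reaches $v_i\in F_w\cap C_-$ at directed cost $0$ by travelling against the orientation, while crossing $e$ forwards only adds $\lambda\geq 0$. Zero-weight edges of the $F_x$ need a separate (easy) contraction step, since the triangle-inequality argument only applies when $\lambda>0$. With these corrections your argument goes through and supplies the proof that the paper omits; as written, however, its key step fails.
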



\section{Diversities as Distances}\label{sec:div-distances}

For a finite set $Y$, we denote the powerset of $Y$ 
by $\cP(Y)$ and let $\ncP(Y)=\cP(Y)\setminus\{\emptyset\}$. A
\emph{diversity} on $Y$ is a 
function $\delta: \cP(Y)\to \RR$ satisfying:
\begin{enumerate}
\item[(D1)] $\delta(A\cup B)+\delta(B\cup C)\geq \delta(A\cup C)$ for all $A,C\in\cP(Y)$ and $B\in\ncP(Y)$, and
\item[(D2)] $\delta(A)=0$ for all $A\in\cP(Y)$ with $\card A\leq 1$.
\end{enumerate}
Diversities were introduced by Bryant and Tupper 
in~\cite{BT10}\footnote{Note that Bryant and Tupper required 
$\delta(A)=0 \iff \card A\leq 1$ in (D2). In particular, 
our maps could be regarded as ``pseudo-diversities'', but for
simplicity we shall just call our maps diversities, too.}.
In this section, we will show that a diversity can 
also be thought of as a distance on $\cP(Y)$.
This will allow us to apply the results in 
Section~\ref{sec:symmetric-functions} to 
tight-spans of diversities which we shall consider in the next section.

We first note two trivial properties of diversities; 
for a proof see \cite[Proposition~2.1]{BT10}.
\begin{lem}\label{lem:div-properties}
Let $\delta$ be a diversity on $Y$ and $A,B\subseteq Y$.
\begin{enumerate}
\item If $A\cap B\not=\emptyset$, then $\delta(A)+\delta(B)\geq \delta(A\cup B)$.
\item If $A\subseteq B$, then $\delta(A)\leq \delta(B)$.
\end{enumerate}
\end{lem}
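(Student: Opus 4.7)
The plan is to derive both parts directly from axioms (D1) and (D2) by instantiating the three arguments in (D1) judiciously.

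For part (a), I would apply (D1) with $A' = A$, $B' = A \cap B$, and $C' = B$. The hypothesis $A \cap B \neq \emptyset$ ensures $B' \in \ncP(Y)$ so that (D1) is applicable, and a routine set-theoretic computation gives $A' \cup B' = A$, $B' \cup C' = B$, and $A' \cup C' = A \cup B$, which is exactly the inequality wanted. The only minor subtlety is picking this specific middle set: the more symmetric choice $A' = A \setminus B$, $B' = A \cap B$, $C' = B \setminus A$ would yield only the strictly weaker bound $\delta(A) + \delta(B) \geq \delta(A \triangle B)$.

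For part (b), the key observation is that (D1) with $B'$ a singleton $\{y\}$ and $C' = \emptyset$ reads
\[
\delta(A' \cup \{y\}) + \delta(\{y\}) \;\geq\; \delta(A'),
\]
and since $\delta(\{y\}) = 0$ by (D2), this collapses to $\delta(A' \cup \{y\}) \geq \delta(A')$. In other words, appending a single element never decreases $\delta$. I would then enumerate $B \setminus A = \{y_1, \dots, y_k\}$ (the case $k = 0$ being trivial) and iterate this one-element monotonicity to form the chain
\[
\delta(A) \;\leq\; \delta(A \cup \{y_1\}) \;\leq\; \delta(A \cup \{y_1,y_2\}) \;\leq\; \dots \;\leq\; \delta(B).
\]

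I do not anticipate a genuine obstacle; the whole argument is about spotting the correct specialisations of (D1), most importantly the singleton choice for $B'$ which lets (D2) wipe out one summand and deliver monotonicity in a single stroke.
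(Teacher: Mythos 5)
Your proof is correct: both instantiations of (D1) are valid (in part (a) the hypothesis $A\cap B\neq\emptyset$ is exactly what makes the middle set admissible, and in part (b) the singleton middle set plus (D2) gives one-element monotonicity, which iterates over $B\setminus A$). The paper itself gives no proof, deferring to \cite[Proposition~2.1]{BT10}, and your argument is essentially that standard one, so there is nothing further to reconcile.
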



Now, for an arbitrary symmetric map $D:\ncP(Y)\times \ncP(Y)\to \RR$, we define the following properties:
\begin{enumerate}
\item[(A1)] $D(A,B)+D(B,C)\geq D(A,C)$ for all $A,B,C\in\ncP(Y)$, \quad\text{(triangle inequality)}
\item[(A2)] $D\left(\{x\},\{x\}\right)=0$ for all $x\in Y$, and
\item[(A3)] $D(A,B)=\frac12D(A\union B,A\union B)$ for all $A,B\in\ncP(Y)$ with $A\not=B$.
\end{enumerate}

Given a map $\delta: \cP(Y) \to \RR$, let $D_\delta:\ncP(Y)\times \ncP(Y)\to \RR$ be given by
\[
D_\delta(A,B)=\begin{cases} 2\delta(A),&\text{if }A=B,\\\delta(A\cup B),&\text{else.}\end{cases}
\]
Obviously, $D_\delta$ is symmetric. Conversely, given a symmetric map $D:\ncP(Y)\times \ncP(Y)\to \RR$, we define the function $\delta(D): \cP(Y)\to \RR$ by setting $\delta(D)(A)=\frac12D(A,A)$ for $A\in\ncP(Y)$ and $\delta(D)(\emptyset)=0$.

\begin{prop}
\begin{enumerate}
\item Given an arbitrary function $\delta:\cP(Y)\to \RR$, the map $D_\delta$ satisfies (A3). Moreover, if $\delta$ is a diversity, then $D_\delta$ also satisfies (A1) and (A2).
\item If $D:\ncP(Y)\times \ncP(Y)\to \RR$ is a symmetric map fulfilling (A1)--(A3), then $\delta(D)$ is a diversity.
\item Let $\delta:\cP(Y)\to \RR$. Then $\delta(D_\delta)=\delta$.
\item Let $D:\ncP(Y)\times \ncP(Y)\to \RR$ be a symmetric map. Then $D_{\delta(D)}=D$ if and only if (A3) holds.
\end{enumerate}
\end{prop}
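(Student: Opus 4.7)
Parts (a), (c), (d) are essentially direct verifications; the only non-trivial content is in (b), specifically verifying the triangle-type axiom (D1) for $\delta(D)$.

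For (a), (A3) is immediate from the definition of $D_\delta$ since $D_\delta(A,B) = \delta(A \cup B)$ and $D_\delta(A \cup B, A \cup B) = 2\delta(A \cup B)$ for $A \neq B$. Assuming $\delta$ is a diversity, (A2) follows directly from (D2). For (A1), I would do a case analysis on whether $A,B,C$ coincide: the cases where at most two are distinct reduce to using $\delta \geq 0$ together with the monotonicity statement in Lemma~\ref{lem:div-properties}, and the case of three distinct sets is precisely (D1) applied to $A, B, C$.

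The main obstacle is (b), in particular the verification of (D1). First I would establish a monotonicity lemma for $\delta(D)$: if $U \subseteq V$ in $\ncP(Y)$, then $\delta(D)(U) \leq \delta(D)(V)$. This follows because (A1) applied to $(U,V,U)$ yields $2D(U,V) \geq D(U,U)$, while (A3) (in the case $U \neq V$) gives $D(U,V) = \tfrac{1}{2}D(V,V)$, so $D(U,U) \leq D(V,V)$. Non-negativity of $D(X,X)$ is the special case $U = V = X$ of this argument (using (A1) with $X=X=X$). With monotonicity and non-negativity in hand, (D2) is immediate from (A2) and the definition $\delta(D)(\emptyset)=0$. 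For (D1), I would proceed by cases on $A,C$: if $A=\emptyset$ or $C=\emptyset$ the inequality follows at once from monotonicity and non-negativity. If $A,B,C$ are all non-empty, I first dispose of the degenerate subcase $A \cup B = B \cup C$ (which forces $A \cup C \subseteq A \cup B$, so monotonicity suffices). Otherwise (A3) gives $D(A \cup B, B \cup C) = \tfrac{1}{2}D(A \cup B \cup C, A \cup B \cup C) = \delta(D)(A \cup B \cup C) \geq \delta(D)(A \cup C)$ by monotonicity, and the triangle inequality (A1) at $(A \cup B, B, B \cup C)$, combined with (A3) applied to each of $D(A \cup B, B)$ and $D(B, B \cup C)$ (handling the trivial subcases $A \subseteq B$ or $C \subseteq B$ separately by reducing via monotonicity as above), yields $\delta(D)(A \cup B) + \delta(D)(B \cup C) \geq D(A \cup B, B \cup C)$, chaining to the desired bound.

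For (c), direct computation gives $\delta(D_\delta)(A) = \tfrac{1}{2}D_\delta(A,A) = \tfrac{1}{2}\cdot 2\delta(A) = \delta(A)$ for $A \in \ncP(Y)$, while both sides vanish on $\emptyset$ (this is automatic if $\delta$ is a diversity by (D2), and otherwise is handled by the convention $\delta(D)(\emptyset)=0$). For (d), the defining formula of $D_{\delta(D)}$ agrees with $D$ on the diagonal by construction, while for $A \neq B$ one has $D_{\delta(D)}(A,B) = \delta(D)(A \cup B) = \tfrac{1}{2}D(A \cup B, A \cup B)$, which equals $D(A,B)$ for all such $A,B$ precisely when (A3) holds. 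This establishes both directions of the equivalence in (d).
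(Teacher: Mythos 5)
Your proposal is correct and covers all four parts; your arguments for (a), (c) and (d) coincide with the paper's direct verifications. The genuine difference is in the key step, axiom (D1) in part (b). The paper proves it with a single chain: by (A3), $\delta(D)(A\cup B)=\tfrac12D(A\cup B,A\cup B)=D(A,B)$ and $\delta(D)(B\cup C)=D(B,C)$; then (A1) applied to the triple $(A,B,C)$ gives $D(A,B)+D(B,C)\geq D(A,C)=\tfrac12D(A\cup C,A\cup C)=\delta(D)(A\cup C)$. That chain is valid exactly when $A$, $B$, $C$ are pairwise distinct and non-empty, and the paper says nothing about the remaining cases. You instead apply (A1) at the triple $(A\cup B,\,B,\,B\cup C)$ and (A3) to the pairs $(A\cup B,B)$, $(B,B\cup C)$ and $(A\cup B,B\cup C)$, passing through $\delta(D)(A\cup B\cup C)$; this forces you to first derive monotonicity and non-negativity of $\delta(D)$ from (A1) and (A3), and to dispose of the subcases $A\cup B=B\cup C$, $A\subseteq B$, $C\subseteq B$, and $A=\emptyset$ or $C=\emptyset$. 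What your route buys is completeness: the degenerate instances of (D1) (e.g. $A=B$, $A=C$, or $A$ empty) are not covered by the paper's displayed computation and do require exactly your monotonicity/non-negativity observations, so your auxiliary lemma fills a genuine, if minor, gap in the published argument. What the paper's route buys is brevity: in the generic case it is a three-line substitution with no auxiliary lemma. The most economical complete proof would combine the two: the paper's chain for pairwise distinct non-empty sets, plus your lemma to handle the rest.
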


\begin{proof}
\begin{enumerate}
\item For all $A,B\in\ncP(Y)$ with $A\not=B$ we have $D_\delta(A,B)=\delta(A\union B)=\frac12D_\delta(A\union B,A\union B)$, that is, (A3) holds. If in addition $\delta$ is a diversity, then for all $A,B,C\in\ncP(Y)$  with $A\not=C$ we have
\[
D_\delta(A,B)+D_\delta(B,C)\geq\delta(A\cup B)+\delta(B\cup C) \overset{(D1)}{\geq} \delta(A\cup C)=D_\delta(A,C)\,.
\]
So, if $A=C$, using Lemma~\ref{lem:div-properties}, we get 
\[
D_\delta(A,B)+D_\delta(B,C)=2D_\delta(A,B)=2\delta(A\cup B) \geq 2 \delta(A)=D_\delta(A,C)\,.
\]
Hence (A1) holds. Furthermore, $D_\delta(\left(\{x\},\{x\}\right)=2\delta(\{x\})=0$ for all $x\in Y$ by (D2), so (A1) and (A2) also hold.
\item We have
\begin{align*}
\delta(D)(A\cup B)+\delta(D)(B\cup C)&=\frac12D(A\cup B,A\cup B)+\frac12D(B\cup C, B\cup C)\\
&\overset{(A3)}=D(A,B)+D(B,C)\overset{(A1)}\geq D(A,C)\\
&=\frac12D(A\cup C,A\cup C)=\delta(D)(A\cup C)\,,
\end{align*}
which is (D1). From (A2), we conclude that $\delta(D)(\{x\})=D(\{x\},\{x\})=0$ for all $x\in Y$, hence (D2) holds.
\item By definition, $\delta(D_\delta)(A)=\frac12D_\delta(A,A)=\frac22\delta(A)=\delta(A)$ for all $A\in\cP(Y)$.
\item For all $A,B\in\ncP(Y)$ with $A\not=B$ we have
\[
  D_{\delta(D)}(A,B)=\delta(D)(A\cup B)=\frac12D(A\cup B,A\cup B)\,
\]
and $D_{\delta(D)}(A,A)=2\delta(D)(A)=D(A,A)$. Hence $D_{\delta(D)}=D$ if and only if condition (A3) is satisfied.
\end{enumerate}
\end{proof}

\begin{cor}\label{cor:div-sym}
Diversities on $Y$ are in bijective correspondence with symmetric maps $\ncP(Y)\times \ncP(Y)\to \RR$ satisfying (A1)--(A3).
\end{cor}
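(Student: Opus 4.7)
The corollary is an immediate consequence of the preceding proposition, so my plan is simply to assemble the four parts of that proposition into a bijection statement. Define the maps
\[
\Phi:\ \delta \longmapsto D_\delta, \qquad \Psi:\ D \longmapsto \delta(D),
\]
where the domain of $\Phi$ is the set of diversities on $Y$ and the domain of $\Psi$ is the set of symmetric maps $\ncP(Y)\times\ncP(Y)\to\RR$ satisfying (A1)--(A3). I would first verify these maps are well defined on the asserted domains: part~(a) of the proposition shows $D_\delta$ satisfies (A1)--(A3) whenever $\delta$ is a diversity, and part~(b) shows $\delta(D)$ is a diversity whenever $D$ satisfies (A1)--(A3).

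Next I would check that $\Phi$ and $\Psi$ are mutually inverse. Part~(c) of the proposition gives $\Psi\circ\Phi=\id$ on the set of all functions $\cP(Y)\to\RR$, and in particular on diversities. Part~(d) gives $\Phi\circ\Psi=\id$ precisely on those $D$ for which (A3) holds; since (A3) is among our standing hypotheses on $D$, we obtain $\Phi\circ\Psi=\id$ on the relevant domain as well. Together these show $\Phi$ and $\Psi$ are inverse bijections between the two sets.

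I do not foresee any real obstacle: the content is entirely contained in the proposition. The only subtlety worth flagging explicitly is that condition (A3) is used in two distinct roles — it is automatically inherited by $D_\delta$ (so the image of $\Phi$ lies in the correct set), and it is the precise condition needed so that $\Psi$ actually recovers $D$ from $\delta(D)$. Once this is noted, the corollary follows in one line.
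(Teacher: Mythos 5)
Your proposal is correct and matches the paper's approach exactly: the paper gives no separate proof of Corollary~\ref{cor:div-sym}, treating it as an immediate consequence of the four parts of the preceding proposition, which is precisely how you assemble it (well-definedness from (a) and (b), mutual inverses from (c) and (d), with (A3) as the hypothesis making (d) applicable). Nothing further is needed.
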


To not only obtain a symmetric map, but even a distance on $\ncP(Y)$, one can use the process described in Section~\ref{sec:symmetric-functions} to arrive at the distance map $d_\delta:\ncP(Y)\times \ncP(Y)\to \RR$ defined by $d_\delta=((D_\delta)')_+$, or, equivalently, define
\begin{align}\label{eq:diversity-distance}
d_\delta(A,B)=\begin{cases} \max(0,\delta(A\cup B)-\left(\delta(A)+\delta(B)\right)),&\text{if }A\not=B,\\0,&\text{if }A=B\,.\end{cases}
\end{align}

\begin{lem}
The mapping from the set of diversities on $Y$ to the set of all distances on $\ncP(Y)$ defined by $\delta\mapsto d_\delta$ is injective.
\end{lem}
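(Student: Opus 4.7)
The plan is to show that $\delta$ can be recovered from $d_\delta$ by a recursion on set size. If I can produce, for each non-singleton $A\subseteq Y$, a pair $(B,C)$ with $B\ne C$ and $B\cup C=A$ for which the $\max$ in \eqref{eq:diversity-distance} is attained by the second argument and the summands $\delta(B),\delta(C)$ are already known, then a straightforward induction on $\card A$ will finish the argument. The base case $\card A\le 1$ is forced by (D2), so no information from $d_\delta$ is needed there.

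For the inductive step with $\card A\geq 2$, the natural choice will be $B=A\setminus\{x\}$ and $C=\{x\}$ for some $x\in A$. Then $B\ne C$ (since $x\notin B$), $B\cup C=A$, and (D2) gives $\delta(C)=0$, so
\[
d_\delta(A\setminus\{x\},\{x\})\ =\ \max\bigl(0,\,\delta(A)-\delta(A\setminus\{x\})\bigr).
\]
Once I know that the expression inside the $\max$ is non-negative, I obtain the recursion
\[
\delta(A)\ =\ d_\delta(A\setminus\{x\},\{x\})+\delta(A\setminus\{x\}),
\]
and since $\delta(A\setminus\{x\})$ is determined by $d_\delta$ via the inductive hypothesis, $\delta(A)$ is as well. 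Injectivity of $\delta\mapsto d_\delta$ follows immediately.

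The only real obstacle is the clipping to $0$ in \eqref{eq:diversity-distance}: a priori, passing from $\delta$ to $d_\delta$ could discard the values where $\delta(A\cup B)<\delta(A)+\delta(B)$. This is resolved by invoking Lemma~\ref{lem:div-properties}(b) (monotonicity), which, since $A\setminus\{x\}\subseteq A$, guarantees $\delta(A\setminus\{x\})\le\delta(A)$, so the argument of the $\max$ above is indeed non-negative and the clipping is never active for this particular pair. This is precisely why we can afford to use the distance $d_\delta$ (rather than the symmetric map $D_\delta$) without losing information.
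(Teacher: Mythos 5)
Your proof is correct and takes essentially the same approach as the paper: an induction on cardinality in which $\delta(A)$ is compared with $\delta(A\setminus\{x\})$ via $d_\delta(A\setminus\{x\},\{x\})$, using (D2) to kill the singleton term and the monotonicity part of Lemma~\ref{lem:div-properties} to see that the $\max$ in the definition of $d_\delta$ never clips. The paper merely phrases the induction as agreement of two diversities $\delta,\delta'$ with $d_\delta=d_{\delta'}$ (building up by adjoining one element $i$ at a time) rather than as explicit recovery of $\delta$ from $d_\delta$; the two formulations are identical in substance.
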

\begin{proof}
Let $\delta,\delta'$ be diversities on $Y$ with $d_\delta(A,B)=d_{\delta'}(A,B)$ for all $A,B\in\ncP(Y)$. We will show that $\delta(A)=\delta'(A)$ implies $\delta(A\cup\{i\})=\delta'(A\cup\{i\})$ for all $i \in Y$, which will prove the claim by induction. Since we have
\[
\max(0,\delta(A\cup\{i\})-\delta(A)-\delta(\{i\}))=d_\delta(A,\{i\})=d_{\delta'}(A,\{i\})=\max(0,\delta'(A\cup\{i\})-\delta'(A)-\delta'(\{i\}))\,,
\]
by Lemma~\ref{lem:div-properties} we also have $\delta(A\cup\{i\})-\delta(A)=\delta'(A\cup\{i\})-\delta'(A)$, which completes the proof.
\end{proof}

We conclude with one final observation which is of independent interest.

\begin{lem}
Let $Y$ be a finite set, $\delta$ a diversity on $Y$ and $d_\delta$ the associated distance on $\ncP(Y)$. Then $d_\delta(A,B)=0$ for all $A,B\in\ncP(Y)$ with $A\cap B\not=\emptyset$.
\end{lem}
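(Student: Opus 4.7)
The plan is to unwind the definition of $d_\delta$ and invoke Lemma~\ref{lem:div-properties} directly. Recall that by Equation~\eqref{eq:diversity-distance}, for distinct $A,B\in\ncP(Y)$ we have
\[
d_\delta(A,B)=\max\bigl(0,\delta(A\cup B)-\delta(A)-\delta(B)\bigr),
\]
and $d_\delta(A,A)=0$ by definition. So the claim is obvious in the case $A=B$, and in the remaining case it reduces to showing $\delta(A\cup B)\le \delta(A)+\delta(B)$ whenever $A\cap B\neq\emptyset$.

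For distinct $A,B$ with $A\cap B\neq\emptyset$, this is precisely the content of Lemma~\ref{lem:div-properties}(a): under the assumption $A\cap B\neq\emptyset$, $\delta(A)+\delta(B)\ge \delta(A\cup B)$, so the argument of the maximum is non-positive and $d_\delta(A,B)=0$.

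There is no real obstacle here; the statement is essentially just a reformulation of the sub-additivity property of diversities on overlapping sets. The only care needed is to distinguish the $A=B$ case (handled by definition) from the $A\neq B$ case (handled by Lemma~\ref{lem:div-properties}(a)).
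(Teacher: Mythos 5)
Your proof is correct and follows exactly the paper's own argument: both reduce the claim to the subadditivity property $\delta(A)+\delta(B)\geq\delta(A\cup B)$ for overlapping sets from Lemma~\ref{lem:div-properties} and then conclude from the definition of $d_\delta$ in Equation~\eqref{eq:diversity-distance}. Your explicit separation of the case $A=B$ is a minor (and harmless) extra precision that the paper leaves implicit.
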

\begin{proof}
By Lemma~\ref{lem:div-properties}, we have $\delta(A\cup B)-\delta(A)-\delta(B)\leq 0$ which implies $d_\delta(A,B)=0$ by the definition of $d_\delta$.
\end{proof}

\section{Tight-Spans of Diversities}\label{sec:div-ts}

In \cite{BT10}, the tight-span of a diversity is introduced, 
which is defined as follows.
Given a diversity $\delta$ on a finite set $Y$,  
the tight-span of $\delta$ is the set $\divTS \delta$ 
of all minimal elements of the polyhedron
\[
\divP \delta=\SetOf{f\in \RR^{\cP(Y)}}{f(\emptyset)=0 \text{ and } \sum_{A\in\cA} f(A)\geq \delta\left(\bigcup \cA\right) \text{ for all } \cA\subseteq\cP(Y)}\,.
\]
We shall also consider the related set 
$\divTSm{\delta}$ consisting of all minimal elements of the polyhedron
\[
\divPm{\delta} =\SetOf{f\in \RR^{\cP(Y)}}{f(\emptyset)=0 \text{ and } f(A)+f(B)\geq \delta(A\cup B) \text{ for all } A,B\in\cP(Y)}\,
\]
which was mentioned in a preliminary version of 
\cite{BT10}. Note that Bryant and Tupper 
showed that the set $\divTS \delta$ is an injective 
hull in an appropriately defined category. Although 
a similar result has not be shown to hold for
$\divTSm{\delta}$, this set will also be useful in 
our investigations of diversities.

We first prove that $\divTSm{\delta}$ 
arises from the point configuration $\cA(\ncP(Y))$, 
the configuration of integer points 
in a simplex with edge-length 2, 
and that $\divTS\delta$ arises from $\cC(\ncP(Y))$, 
the configuration of vertices of a cube.
More specifically, let  $w^\delta: \cC(\ncP(Y))\to \RR$ be 
given by $w^\delta(\sum_{i\in \cA} e_i)=\delta(\cup\cA)$ 
for all $\cA\subseteq\ncP(Y)$. Then:

\begin{prop}\label{prop:divts-pc}
Let $\delta: \cP(Y)\to \RR$. Then we have:
\begin{enumerate}
\item $\divPm{\delta} =\{0\}\times\envelope{w^{D_\delta}}{\cA(\ncP(Y)}$, and
$\divTSm{\delta} =\{0\}\times\tightspan{w^{D_\delta}}{\cA(\ncP(Y))}$.
\item $\divP{\delta} =\{0\}\times\envelope{w^\delta}{\cC(\ncP(Y)}$, and
$\divTS{\delta} =\{0\}\times\tightspan{w^\delta}{\cC(\ncP(Y))}$.
\end{enumerate}
\end{prop}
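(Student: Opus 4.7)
The overall plan is to match defining inequalities on both sides of each asserted equality. First I would identify elements $f:\cP(Y)\to\RR$ with $f(\emptyset)=0$ with elements of $\RR^{\ncP(Y)}$ via restriction, so that the leading ``$\{0\}\times$'' just records the $\emptyset$-coordinate being pinned to $0$. The tight-span halves will then follow from Lemma~\ref{lem:minimal}, provided I verify that $\cA(\ncP(Y))$ and $\cC(\ncP(Y))$ are positive configurations (immediate, since their elements are nonnegative integer combinations of the basis vectors $e_A$) and that each envelope is bounded from below (which will fall out once the singleton/diagonal constraints are identified below).

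For part (a), I would apply Proposition~\ref{prop:ts-dissimilarity} with $X=\ncP(Y)$ and $D=D_\delta$ to rewrite $\envelope{w^{D_\delta}}{\cA(\ncP(Y))}$ as $P_{D_\delta}$, and then match inequalities under the restriction $f\leftrightarrow f|_{\ncP(Y)}$. For $A\neq B$ in $\ncP(Y)$, both sides demand $f(A)+f(B)\geq\delta(A\cup B)$, since $D_\delta(A,B)=\delta(A\cup B)$. The subtle case is the diagonal $A=B\in\ncP(Y)$: the envelope asks $2g(A)\geq D_\delta(A,A)=2\delta(A)$, while the $\divPm{\delta}$-inequality for $A=B$ only yields $2f(A)\geq\delta(A)$. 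I would recover the stronger form from the $\divPm{\delta}$-inequality with $A=\emptyset$ and $B\in\ncP(Y)$, which, using $f(\emptyset)=0$, reads $f(B)\geq\delta(B)$; conversely, any $g$ in the envelope, extended by $g(\emptyset)=0$, automatically meets the $\divPm{\delta}$-inequalities involving $\emptyset$. The tight-span identity then follows from Proposition~\ref{prop:ts-dissimilarity}(b), minimality passing through the bijection because the $\emptyset$-coordinate is fixed on both sides.

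Part (b) is largely a direct transcription of definitions. The configuration $\cC(\ncP(Y))$ has elements $v_\cA:=\sum_{A\in\cA}e_A$ indexed by $\cA\subseteq\ncP(Y)$, and the envelope inequality attached to $v_\cA$ reads $\sum_{A\in\cA}g(A)\geq\delta(\cup\cA)$ (under the sign convention for $w^\delta$ that makes the envelope match the constraints of $\divP{\delta}$), matching directly the $\divP{\delta}$-inequality indexed by the same $\cA$. The remaining $\divP{\delta}$-inequalities, indexed by $\cA$ containing $\emptyset$, reduce after the substitution $f(\emptyset)=0$ to those for $\cA\setminus\{\emptyset\}$, since $\emptyset$ contributes $0$ to both the sum on the left and the union on the right. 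The singleton inequalities $g(A)\geq\delta(A)$ provide the lower bound needed for Lemma~\ref{lem:minimal}, which then delivers the tight-span identity. The main obstacle of the whole argument is the diagonal/$\emptyset$ bookkeeping in part (a) — verifying that the strengthening of the $A=B$ envelope constraint over the $\divPm{\delta}$-constraint is supplied precisely by the $A=\emptyset$ inequality together with $f(\emptyset)=0$; once this is observed, part (b) is essentially a transcription of definitions.
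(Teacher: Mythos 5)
Your proposal is correct and follows essentially the same route as the paper: both proofs proceed by matching the defining inequalities coordinate-by-coordinate, with the key bookkeeping step being exactly the one you isolate in part (a) --- that the $\divPm{\delta}$-inequalities involving $\emptyset$ (together with $f(\emptyset)=0$) supply the diagonal envelope constraints $2f(A)\geq D_\delta(A,A)=2\delta(A)$ attached to the points $2e_A$, after which Proposition~\ref{prop:ts-dissimilarity} and Lemma~\ref{lem:minimal} convert the polyhedron identities into the tight-span identities. Your parenthetical about the sign convention for $w^\delta$ in part (b) is also apt, since the paper's own proof there writes the envelope constraints as $\scp{a}{f}\geq w^\delta(a)$ rather than $\scp{a}{f}\geq -w^\delta(a)$, i.e., it silently uses the convention you spell out.
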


\begin{proof}
\begin{enumerate}
\item We have 
\begin{align*}
\divPm{\delta}&=\SetOf{f\in \RR^{\cP(Y)}}{f(\emptyset)=0 \text{ and } f(A)+f(B)\geq \delta(A\cup B) \text{ for all } A,B\in\cP(Y)}\\
&=\left\{f\in \RR^{\cP(Y)}\,\right|\,f(\emptyset)=0, f(A)+f(\emptyset)=f(A)\geq \delta(A) \text{ for all } A\in\ncP(Y) \\
&\qquad \text{ and } f(A)+f(B)\geq \delta(A\cup B) \text{ for all } A,B\in\ncP(Y)\Big\}\\
&=\{0\}\times\left\{f\in \RR^{\ncP(Y)}\,\right|\,f(A)+f(A)\geq 2\delta(A) \text{ for all } A\in\ncP(Y), f(A)+f(A)\geq \delta(A\cup A)\\
 &\qquad \text{ for all } A\in\ncP(Y), \text{ and } f(A)+f(B)\geq \delta(A\cup B) \text{ for all } A,B\in\ncP\text{ with }A\not=B\Big\}\\
&=\{0\}\times \envelope{w^{D_\delta}}{\cA(\ncP(Y)}\,.
\end{align*}
The statement now follows from Proposition~\ref{prop:ts-dissimilarity}.
\item 
We have 
\begin{align*}
\divP \delta&=\SetOf{f\in \RR^{\cP(Y)}}{f(\emptyset)=0 \text{ and } \sum_{A\in\cA} f(A)\geq \delta\left(\bigcup \cA\right) \text{ for all } \cA\subseteq\cP(Y)}\\
&=\{0\}\times\SetOf{f\in \RR^{\ncP(Y)}}{\scp{\sum_{A\in\cA}e_A}{f}\geq \delta\left(\bigcup \cA\right) \text{ for all } \cA\subseteq\cP(Y)}\\
&=\{0\}\times\SetOf{f\in \RR^{\ncP(Y)}}{\scp{a}{f}\geq w^\delta(a) \text{ for all } a\in\cC(\ncP(Y))}\\
&=\{0\}\times \envelope{w^\delta}{\cC(\ncP(Y))}\,.
\end{align*}
The statement now follows from Lemma~\ref{lem:minimal} since $\cC(\ncP(Y))$ is positive and $\envelope{w^\delta}{\cC(\ncP(Y)}$ is bounded from below by the inequalities $e_A\geq \delta(A)=0$ for all $A\in\ncP(Y)$.
\end{enumerate}
\end{proof}

Note that, by considering the distance map $d_\delta$ defined in \eqref{eq:diversity-distance}, we get
\[
\divTSm{\delta}=\{0\}\times T_{d_\delta}+\left(\delta(A)\right)_{A\in\cP(X)}\,,
\]
which, in particular, implies the following:
\begin{cor}\label{cor:ts-diversity-distance}
$\divTSm{\delta}$ corresponding to the diversity $\delta$ is a tree if and only if the tight-span $T_{d_\delta}$ of the distance $d_\delta$ is a tree. 
\end{cor}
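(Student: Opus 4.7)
The plan is to deduce the corollary directly from the displayed identity
\[
\divTSm{\delta}\ =\ \{0\}\times T_{d_\delta}+\bigl(\delta(A)\bigr)_{A\in\cP(X)}
\]
which immediately precedes the statement. Concretely, I would first verify this identity by chaining together the three ingredients already at our disposal: Proposition~\ref{prop:divts-pc}(a) rewrites $\divTSm{\delta}$ as $\{0\}\times\tightspan{w^{D_\delta}}{\cA(\ncP(Y))}$, which by Proposition~\ref{prop:ts-dissimilarity} is $\{0\}\times T_{D_\delta}$. Then Corollary~\ref{cor:ts-sym-div} applied to the symmetric map $D_\delta$ yields $T_{D_\delta}=T_{((D_\delta)')_+}+v$ for the shift vector $v(A)=\tfrac12 D_\delta(A,A)=\delta(A)$, and a direct computation shows $((D_\delta)')_+ = d_\delta$ (matching the formula in~\eqref{eq:diversity-distance}).

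With this identity in hand, the corollary is a purely formal observation: the set $\divTSm{\delta}$ is obtained from $T_{d_\delta}$ by appending a fixed $0$-coordinate indexed by $\emptyset$ and then translating by a fixed vector of $\RR^{\cP(X)}$. Both operations are homeomorphisms of the ambient affine space and send polyhedral complexes to isomorphic polyhedral complexes, so $\divTSm{\delta}$ and $T_{d_\delta}$ have the same face structure. In particular, one is a (one-dimensional, contractible) tree precisely when the other is.

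I do not anticipate a genuine obstacle here; the only point that needs a moment of care is checking that $((D_\delta)')_+$ really equals $d_\delta$ (including the diagonal values, where $(D_\delta)'(A,A)=2\delta(A)-2\cdot\tfrac12 D_\delta(A,A)=0$, matching $d_\delta(A,A)=0$). Everything else is bookkeeping on top of the already established Propositions~\ref{prop:ts-dissimilarity},~\ref{prop:ts-sym-div} and~\ref{prop:divts-pc}, together with Lemma~\ref{lem:ts-sym-div}.
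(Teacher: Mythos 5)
Your proposal is correct and follows exactly the route the paper intends: the displayed identity $\divTSm{\delta}=\{0\}\times T_{d_\delta}+(\delta(A))_{A\in\cP(X)}$ is precisely what the paper derives (implicitly) from Proposition~\ref{prop:divts-pc}(a), Proposition~\ref{prop:ts-dissimilarity} and Corollary~\ref{cor:ts-sym-div} (with the check that $((D_\delta)')_+=d_\delta$), and the corollary then follows because the embedding and translation preserve the polyhedral face structure. Your verification of the diagonal case and of the shift vector $v(A)=\delta(A)$ fills in the only details the paper leaves unstated, so there is nothing to add.
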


A special kind of diversity arising from a phylogenetic tree is given as follows:
Let $T$ be a weighted tree with leaf set $Y$. The \emph{phylogenetic diversity} $\delta_T$ associated to $T$ is defined by mapping any subset $A\subseteq Y$ to the length $\delta_T(A)$ of the smallest subtree of $T$ connecting taxa in $A$. These are precisely the diversities whose tight-spans are trees:

\begin{thm}\label{thm:phylogenetic-tree}
Let $\delta$ be diversity on $Y$. Then the following are equivalent:
\begin{enumerate}
\item \label{thm:phylogenetic-tree:diversity}$\delta$ is a phylogenetic diversity.
\item \label{thm:phylogenetic-tree:divTS}The tight-span $\divTS{\delta}$ is a tree.
\item \label{thm:phylogenetic-tree:divTSm}The tight-span $\divTSm{\delta}$ is a tree.
\end{enumerate}
\end{thm}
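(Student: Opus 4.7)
My plan is to organise the proof around the observation that phylogenetic diversities admit an internal split decomposition on $Y$, which can be matched with the external split decompositions of the two point configurations $\cC(\ncP(Y))$ and $\cA(\ncP(Y))$ arising from Proposition~\ref{prop:divts-pc}.

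\smallskip

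\textbf{Step 1 (Setup).} For a split $S=\{A_0,B_0\}$ of $Y$, define the \emph{split diversity} $\delta_S:\cP(Y)\to\RR$ by $\delta_S(C)=1$ if $C\cap A_0\neq\emptyset$ and $C\cap B_0\neq\emptyset$, and $\delta_S(C)=0$ otherwise. A routine check shows that if $T$ is a weighted tree with leaf set $Y$ and edge weights $\alpha:E(T)\to \RR_{>0}$, and if $\{S_e\}_{e\in E(T)}$ are the $Y$-splits induced by the edges, then $\delta_T=\sum_{e\in E(T)} \alpha(e)\delta_{S_e}$. Since the splits $\{S_e\}$ are pairwise compatible, and conversely any compatible system of splits of $Y$ is the edge-split system of some weighted tree, (a) is equivalent to the existence of a compatible set $\cS$ of splits of $Y$ and $\alpha:\cS\to \RR_{>0}$ with $\delta=\sum_{S\in\cS}\alpha(S)\delta_S$.

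\smallskip

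\textbf{Step 2 ((a)$\Rightarrow$(b) and (a)$\Rightarrow$(c)).} First I would verify that for a single $Y$-split $S=\{A_0,B_0\}$ the weight function $w^{\delta_S}$ on $\cC(\ncP(Y))$ induces a split of that configuration, with the split hyperplane separating those vertices $\sum_{A\in\cA} e_A$ where every $A\in\cA$ is contained in $A_0$ from those where every $A\in\cA$ is contained in $B_0$. The analogous statement for $w^{D_{\delta_S}}$ on $\cA(\ncP(Y))$ follows from Proposition~\ref{prop:splits-a}, since the partial split of $\ncP(Y)$ given by $\{\{C:C\subseteq A_0\},\{C:C\subseteq B_0\}\}$ is exactly the one whose half-sum hyperplane defines the corresponding split of $\cA(\ncP(Y))$. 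Second, compatibility of two $Y$-splits $S,S'$ translates to compatibility of the induced partial splits of $\ncP(Y)$, and hence (via Proposition~\ref{prop:partial-splits-comp}) to compatibility of the induced splits of $\cA(\ncP(Y))$; a similar verification yields compatibility inside $\cC(\ncP(Y))$. Taking the compatible splits coming from the decomposition $\delta=\sum_{S\in\cS}\alpha(S)\delta_S$, Proposition~\ref{prop:1-dim:comp} immediately gives that both $\divTS{\delta}$ and $\divTSm{\delta}$ are trees.

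\smallskip

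\textbf{Step 3 ((c)$\Rightarrow$(a)).} Assume $\divTSm{\delta}$ is a tree. By Corollary~\ref{cor:ts-diversity-distance} the tight-span $T_{d_\delta}$ of the distance $d_\delta$ on $\ncP(Y)$ is a tree, so by Theorem~\ref{thm:distance-tree} we have $d_\delta=\sum_{P\in\cP}\alpha(P)d_P$ for a compatible set $\cP$ of partial splits of $\ncP(Y)$. The crucial additional constraint is that $d_\delta(A,B)=0$ whenever $A\cap B\neq\emptyset$ (Lemma at the end of Section~\ref{sec:div-distances}); since all $\alpha(P)>0$, no partial split in $\cP$ can separate two overlapping subsets of $Y$. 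This forces every partial split $\{P_1,P_2\}\in\cP$ to satisfy $\bigl(\bigcup P_1\bigr)\cap\bigl(\bigcup P_2\bigr)=\emptyset$, so it refines to a partial split $S=\{A_0,B_0\}$ of $Y$ with $P_1\subseteq\{C:C\subseteq A_0\}$ and $P_2\subseteq\{C:C\subseteq B_0\}$. Pulling this back via the equation $\delta(A)=\tfrac12 D_\delta(A,A)=\delta(A)+d_\delta(A,A)\cdots$ and using the identity $\delta_S = $ (suitably normalised) $d_S$ on $\ncP(Y)$, $\delta$ is expressed as a non-negative combination of split diversities associated to compatible $Y$-splits, hence phylogenetic.

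\smallskip

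\textbf{Step 4 ((b)$\Rightarrow$(a)) and main obstacle.} If $\divTS{\delta}$ is a tree then by Proposition~\ref{prop:1-dim:comp} the subdivision of $\cC(\ncP(Y))$ induced by $w^\delta$ is a common refinement of a compatible set of cube-splits, and Corollary~\ref{cor:split-decomposition} yields a non-negative decomposition $w^\delta=\sum_T \beta(T) w_T$. The main obstacle is to show that every cube-split $T$ appearing with $\beta(T)>0$ must come from a $Y$-split: the cube $\cC(\ncP(Y))$ has many more splits than those of the form $\{A_0,B_0\}$ of $Y$, so one has to rule out the extraneous ones. I would handle this by exploiting the consistency conditions $w^\delta(\sum_{A\in\cA} e_A)=\delta(\cup\cA)$, which depend only on $\cup\cA$, to show that a cube-split compatible with this invariance must be of the $Y$-split type described in Step~2; then $\delta=\sum \beta(T)\delta_{S_T}$ with $\{S_T\}$ compatible, giving a weighted tree. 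This reduction of cube-splits to $Y$-splits, carried out through the symmetry of $w^\delta$ under the collapsing map $\cA\mapsto\cup\cA$, is the technical heart of the argument and is where I expect the bulk of the work to lie.
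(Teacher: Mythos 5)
There is a genuine gap, in fact two, and they sit exactly where the paper's proof does its real work.

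In Step 3 your reduction stops just short of the hard part. From $d_\delta=d_{(\cP,\alpha)}$ and the vanishing of $d_\delta$ on overlapping sets you correctly conclude that each $P=\{P_1,P_2\}\in\cP$ has disjoint supports $A_0=\bigcup P_1$ and $B_0=\bigcup P_2$, i.e.\ $P_1\subseteq\ncP(A_0)$ and $P_2\subseteq\ncP(B_0)$. But to identify $d_P$ with the distance $d_{\delta_S}$ of a split diversity you need the \emph{equalities} $P_1=\ncP(A_0)$ and $P_2=\ncP(B_0)$: if some nonempty $C\subseteq A_0$ is missing from $P_1$, then $d_P$ is not of the form $d_{\delta_S}$ for any split $S$, and the ``pulling back'' at the end of your Step 3 (which you only indicate by a trailing ellipsis) has nothing to pull back to. Establishing these equalities is precisely the content of Lemma~\ref{lem:div-tree-props} and Corollary~\ref{cor:div-tree-props} in the paper, proved by repeated use of axiom (D1): singletons of a member of a block lie in that block, members of one block are at $d_\delta$-distance $0$, and blocks are closed under adjoining such singletons. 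Moreover, even granting this, you only obtain \emph{partial} splits $\{A_0,B_0\}$ of $Y$, whereas a phylogenetic diversity requires full splits; ruling out $A_0\cup B_0\neq Y$ is a separate argument (the paper takes $i\in A_0$, $j\in B_0$, $k\notin A_0\cup B_0$ and derives a contradiction from (D1) and compatibility, Equation~\eqref{eq:div-proof}). Finally, passing from a decomposition of $d_\delta$ to one of $\delta$ requires the injectivity of $\delta\mapsto d_\delta$, the unlabelled lemma in Section~\ref{sec:div-distances}, which you never invoke. These three missing steps constitute most of the paper's Section~\ref{sec:div-ts}.

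Step 4 is where you genuinely diverge from the paper, and it is not carried out: the claim that every split of the cube $\cC(\ncP(Y))$ occurring in the decomposition of $w^\delta$ is induced by a split of $Y$ is exactly what would need to be proved, and you say yourself that this is where the bulk of the work lies. Note also that your description of the candidate cube split in Step 2 is incorrect: the two vertex classes you name (all members of $\cA$ contained in $A_0$, resp.\ in $B_0$) do not even cover the vertex set of the cube, so they cannot be the two maximal cells of a split; already for $\card Y=2$ and $S=\{\{1\},\{2\}\}$ the split induced by $w^{\delta_S}$ has hyperplane $x_{\{1\}}+x_{\{2\}}-x_{\{1,2\}}=1$, and vertices $\cA$ with $\bigcup\cA$ meeting both sides of $S$ lie in the interiors of both cells. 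The paper deliberately avoids cube splits altogether (it explicitly declines to classify them at the end of Section~\ref{sec:splits}); instead it proves Theorem~\ref{thm:tight-span-equal}, that $\divP{\delta}=\divPm{\delta}$ for every split system diversity, which makes statements (b) and (c) interchangeable for the diversities in question and pushes the entire argument into $\cA(\ncP(Y))$, where splits and their compatibility are classified by Propositions~\ref{prop:splits-a} and~\ref{prop:partial-splits-comp}. Replacing your Steps 2 and 4 by Theorem~\ref{thm:tight-span-equal} together with the citation of \cite[Theorem~5.8]{BT10} for (a)$\Rightarrow$(b) would eliminate the unproven technical heart of your proposal; as it stands, both (c)$\Rightarrow$(a) and (b)$\Rightarrow$(a) remain open in your write-up.
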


That \eqref{thm:phylogenetic-tree:diversity} implies \eqref{thm:phylogenetic-tree:divTS} follows from \cite[Theorem~5.8]{BT10}, since the tight-span of a phylogenetic diversity is isomorphic to the tight-span of the metric space associated to the same tree. To show that \eqref{thm:phylogenetic-tree:divTS} is equivalent \eqref{thm:phylogenetic-tree:divTSm}, we will prove that $\divTS{\delta}$ and $\divTSm{\delta}$ are equal for a much larger class of diversities, the so-called \emph{split-system diversities} (Theorem~\ref{thm:tight-span-equal} below). The proof that \eqref{thm:phylogenetic-tree:divTSm} implies \eqref{thm:phylogenetic-tree:diversity} will then take up the remainder of this section.

Let $S$ be a split of $Y$.  We define the \emph{split diversity} $\delta_S:\cP(Y)\to \RR$ of $S$ as
\[
\delta_S(A)=
\begin{cases}
 1,&\text{if } S\text{ splits }A,\\
0,&\text{else\,.}
\end{cases}
\]

Given a set $\cS$ of splits of $Y$ and a function $\alpha:\cS\to\RR_{>0}$ assigning weights to the splits in $\cS$, the \emph{split system diversity} $\delta_{(\cS,\alpha)}$ of $(\cS,\alpha)$ is defined as
\[
\delta_{(\cS,\alpha)}(A)=\sum_{S\in\cS}\alpha(S)\delta_S(A)=\sum_{S\in\cS, S \text{ splits }A}\alpha(S)\,.
\]
A phylogenetic diversity is a special case of a split system diversity where the set $\cS$ is compatible. We now show that in case $\delta$ is a split diversity, the tight-spans $\divTSm{\delta}$ and $\divTS{\delta}$ are equal:

\begin{thm}\label{thm:tight-span-equal}
Let $\cS$ be a split system, $\alpha:\cS\to \RR_{>0}$ and $\delta_{(\cS,\alpha)}$ the associated split system diversity. Then
\[
\divP {\delta_{(\cS,\alpha)}}=\divPm {\delta_{(\cS,\alpha)}}\text{ and }\divTS {\delta_{(\cS,\alpha)}}=\divTSm {\delta_{(\cS,\alpha)}}\,.
\]
\end{thm}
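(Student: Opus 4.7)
My plan is to prove the stronger polyhedral equality $\divP{\delta_{(\cS,\alpha)}} = \divPm{\delta_{(\cS,\alpha)}}$, from which equality of the minimal-element sets $\divTS{\delta_{(\cS,\alpha)}} = \divTSm{\delta_{(\cS,\alpha)}}$ is automatic. The inclusion $\divP{\delta_{(\cS,\alpha)}} \subseteq \divPm{\delta_{(\cS,\alpha)}}$ is immediate, since every pairwise constraint $f(A)+f(B)\geq \delta(A\cup B)$ is the collection constraint specialised to $\cA = \{A,B\}$. The substance lies in the reverse inclusion.

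To prove $\divPm{\delta_{(\cS,\alpha)}} \subseteq \divP{\delta_{(\cS,\alpha)}}$, I would fix $f \in \divPm{\delta_{(\cS,\alpha)}}$ and a collection $\cA \subseteq \ncP(Y)$ of size $k$ (the element $\emptyset$, if present, contributes $0$ to both sides and may be discarded). The case $k=1$ is immediate from $f(A)\geq\delta(A)$, which is itself a pairwise constraint obtained by taking $B = \emptyset$. For $k \geq 2$, I would average the pairwise inequalities over all $\binom{k}{2}$ unordered pairs of distinct elements of $\cA$ with the uniform weight $\frac{1}{k-1}$; since each $A \in \cA$ lies in exactly $k-1$ such pairs, this yields
\[
\sum_{A \in \cA} f(A) \ \geq \ \frac{1}{k-1} \sum_{\{A,B\} \subseteq \cA,\, A \neq B} \delta_{(\cS,\alpha)}(A \cup B),
\]
so the task reduces to proving the combinatorial inequality
\[
\sum_{\{A,B\} \subseteq \cA,\, A \neq B} \delta_{(\cS,\alpha)}(A \cup B) \ \geq \ (k-1)\, \delta_{(\cS,\alpha)}\Bigl(\bigcup \cA\Bigr).
\]

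By linearity of $\delta_{(\cS,\alpha)} = \sum_S \alpha(S) \delta_S$ (with $\alpha>0$), this reduces to the same statement for each split diversity $\delta_S$ individually. Fixing $S = \{U,V\} \in \cS$ that splits $\bigcup \cA$ (otherwise both sides vanish, as every pair union then lies on one side of $S$), I would classify the members of $\cA$ according to whether they are contained in $U$, contained in $V$, or split by $S$, with respective counts $p, q, r$ satisfying $p + q + r = k$, $p + r \geq 1$, and $q + r \geq 1$. A pair $\{A,B\}$ fails to be split by $S$ exactly when both members lie on the same side of $S$, so the number of split pairs equals $\binom{k}{2} - \binom{p}{2} - \binom{q}{2}$; a short case analysis (tightness occurring at $r = 0$, $\{p, q\} = \{1, k-1\}$) shows this count is always at least $k - 1$, which is precisely the bound required.

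The main obstacle I anticipate is this final counting step. While elementary, it demands careful handling of the degenerate configurations where one or more of $p, q, r$ vanishes, in particular the split-free case $r = 0$ with $\min\{p, q\} = 1$, which saturates the bound. The tightness there also explains why the uniform weight $\frac{1}{k-1}$ is exactly the right choice; every other part of the argument is formal LP-duality in disguise, once the counting is in hand.
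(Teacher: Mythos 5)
Your proposal is correct and follows essentially the same route as the paper's proof: sum (equivalently, average with weight $\frac{1}{k-1}$) the pairwise inequalities, reduce by linearity to a per-split counting bound, and show that any split $S$ splitting $\bigcup\cA$ splits at least $\card\cA-1$ of the pair unions. Even the case analysis matches: your $r\geq 1$ case is the paper's ``some $A\in\cA$ is split by $S$'' case, and your $r=0$ count $pq\geq k-1$, i.e.\ $(p-1)(q-1)\geq 0$, is exactly the paper's $s(\card\cA-s)\geq\card\cA-1$.
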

\begin{proof}
We will show that $\divP {\delta_{(\cS,\alpha)}}=\divPm {\delta_{(\cS,\alpha)}}$ which obviously implies $\divTS {\delta_{(\cS,\alpha)}}=\divTSm {\delta_{(\cS,\alpha)}}$. First note that, by definition, for any diversity $\delta$, one has $\divP {\delta}\subseteq\divPm {\delta}$ and furthermore, for all $f\in\divPm {\delta}$ and $A\in\cP(Y)$, one has $f(A)+f(\emptyset)=f(A)\geq\delta(A)$.
It now suffices to show that for any $\cA\subseteq \cP(Y)$ with $\card \cA\geq 2$ and $f\in\divPm{\delta_{(\cS,\alpha)}}$ the system of inequalities
\begin{align}\label{eq:div-2term}
f(A)+f(B)\geq \delta_{(\cS,\alpha)}(A\cup B)\quad\text{for all distinct } A,B\in\cA\,,
\end{align}
implies the inequality
\begin{align}\label{eq:div-allterm}
 \sum_{A\in\cA} f(A)\geq \delta_{(\cS,\alpha)}\left(\bigcup \cA\right)\,.
\end{align}
Summing up all the Inequalities~\eqref{eq:div-2term} we get
\begin{align*}
\sum_{A,B\in\cA,A\not=B}\left(f(A)+f(B)\right)&\geq\sum_{A,B\in\cA,A\not=B}\delta_{(\cS,\alpha)}(A\cup B)\quad\iff\\
\left(\card \cA-1\right)\sum_{A\in\cA}f(A)&\geq\sum_{A,B\in\cA,A\not=B}\sum_{S\in\cS, S \text{ splits }A\cup B}\alpha(S)=\sum_{S\in\cS}\alpha(S)\SP(S,\cA,2)\,,
\end{align*}
where $\SP(S,\cA,2)$ denotes the number of unordered pairs of distinct $A,B\in\cA$ such that $S$ splits $A\cup B$. We now show that $\SP(S,\cA,2)\geq \card\cA-1$ for all $S\in \cS$ that split $\cup \cA$. Dividing the above inequality by $\card\cA-1$ then gives Inequality~\eqref{eq:div-allterm} as desired.

Let $S\in\cS$ be a split that splits $\cup \cA$. First suppose that there exists some $A\in\cA$ that is split by $S$. Then $\SP(S,\cA,2)\geq \card\cA-1$ since obviously for all $B\in\cA$ distinct from $A$ the set $A\cup B$ is also split by $S$. So we can assume that, for $S=(C,D)$ and all $A\in\cA$, we have either $A\subseteq C$ or $A\subseteq D$. Let $s$ be the number of $A\in\cA$ with $A\subseteq C$. Since $S$ splits $\cup \cA$, both $s$ and $\card\cA-s$ have to be at least $1$, so we get $\SP(S,\cA,2)=s(\card\cA-s)\geq \cA-1$, which finishes the proof.
\end{proof}

In preparation to complete the proof of Theorem~\ref{thm:phylogenetic-tree}, we first examine the distance $d_\delta$ in case $\delta$ is a split diversity. So, let $S=\{C,D\}$ be a split of $Y$. For each $A,B\subseteq Y$ we have
\[
\delta_S(A\cup B)-\delta_S(A)-\delta_S(B)=
\begin{cases}
 1,&\text{if } C\subseteq A\text{ and } D\subseteq B\text{ or } C\subseteq B\text{ and } D\subseteq A\\
-1,&\text{if } A\cap C\not=\emptyset, A\cap D\not=\emptyset, B\cap C\not=\emptyset,\text{ and } B\cap D\not=\emptyset,\\
0,&\text{else\,,}
\end{cases}
\]
so that
\[
d_{\delta_S}(A,B)=\begin{cases} 1,&\text{if } C\subseteq A\text{ and } D\subseteq B\text{ or } C\subseteq B\text{ and } D\subseteq A,\\
0,&\text{else.}\end{cases}
\]
Hence $d_{\delta_S}=d_P$ (as defined in Equation~\eqref{eq:partial-distance}), where $P$ is the partial split $\{\ncP(A),\ncP(B)\}$ of $\ncP(Y)$. By Proposition~\ref{prop:splits-a}, the corresponding weight function defines a split of $\cA(\ncP(Y))$. So as to show that a diversity whose tight-span is a tree comes from a split system, we consider these steps in reverse order. The following lemmas will be the key to our proof.

\begin{lem}\label{lem:div2}
Let $Y$ be a finite set, $\{\cA,\cB\}$, $\{\cC,\cD\}$ two compatible partial splits of $
\cP(Y)$ and $A\in\cA$, $A'\in\cC$, $B\in\cB\cap\cD$. Then either $A'\in\cA$ or $A\in\cC$.
\end{lem}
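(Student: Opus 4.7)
The plan is to argue by case analysis on the compatibility of $\{\cA,\cB\}$ and $\{\cC,\cD\}$, invoking the four inclusion patterns listed in \eqref{cond:split:compatible}. In each case I will try to either deduce the conclusion directly from one of the set-inclusions provided by compatibility, or to exploit the hypothesis $B\in\cB\cap\cD$ together with the defining disjointness $\cA\cap\cB=\emptyset$ and $\cC\cap\cD=\emptyset$ to rule the case out entirely.

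If $\cA\subseteq\cC$ (with $\cB\supseteq\cD$), then $A\in\cA\subseteq\cC$ gives $A\in\cC$ immediately. Symmetrically, if $\cA\supseteq\cC$ (with $\cB\subseteq\cD$), then $A'\in\cC\subseteq\cA$ gives $A'\in\cA$. So two of the four compatibility patterns yield the conclusion directly.

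In the case $\cA\supseteq\cD$ and $\cB\subseteq\cC$, the hypothesis $B\in\cB\cap\cD$ yields $B\in\cD\subseteq\cA$ and simultaneously $B\in\cB$, so $B\in\cA\cap\cB$; but $\cA\cap\cB=\emptyset$, a contradiction. Thus this case cannot occur under our hypotheses and the lemma holds vacuously here.

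The remaining case $\cA\subseteq\cD$ and $\cB\supseteq\cC$ is the main obstacle. The direct inclusions place $A$ into $\cD$ and $A'$ into $\cB$, but do not place $A$ into $\cC$ or $A'$ into $\cA$; in fact, $\cA\subseteq\cD$ combined with $\cC\cap\cD=\emptyset$ already forces $\cA\cap\cC=\emptyset$. I expect the hardest part of the argument to lie here: closing the case by exploiting the unordered nature of the partial splits $\{\cA,\cB\}$ and $\{\cC,\cD\}$ to relabel and reduce to a pattern already handled, or by combining the element-level constraints on $A$, $A'$, $B$ with the structure of $\cP(Y)$ to derive a contradiction with the hypothesis $B\in\cB\cap\cD$.
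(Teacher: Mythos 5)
Your case analysis is set up correctly, and the three cases you close are closed correctly: the patterns $\cA\subseteq\cC,\ \cB\supseteq\cD$ and $\cA\supseteq\cC,\ \cB\subseteq\cD$ give the conclusion immediately, and the pattern $\cA\supseteq\cD,\ \cB\subseteq\cC$ is indeed inconsistent with $B\in\cB\cap\cD$. The gap is the remaining pattern $\cA\subseteq\cD,\ \cB\supseteq\cC$, which you leave open — and it cannot be closed, because there the statement is actually false. Your own observations already show that in this pattern the conclusion is positively excluded ($A\in\cA\subseteq\cD$ forces $A\notin\cC$, and $A'\in\cC\subseteq\cB$ forces $A'\notin\cA$), so the only hope was to show the pattern is inconsistent with the hypotheses; it is not. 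Take $Y=\{1,2\}$ and the three distinct points $P=\{1\}$, $Q=\{2\}$, $R=\{1,2\}$ of $\cP(Y)$, and put $\cA=\{P\}$, $\cB=\{Q,R\}$, $\cC=\{Q\}$, $\cD=\{P,R\}$. These are partial splits of $\cP(Y)$, compatible via the second condition of \eqref{cond:split:compatible}; with $A=P$, $A'=Q$, $B=R$ all hypotheses hold, yet $A'\notin\cA$ and $A\notin\cC$. Neither of your proposed rescues works: relabelling ($\cC\leftrightarrow\cD$) is not available because the hypothesis $B\in\cB\cap\cD$ is not invariant under it, and the ground set being a power set adds nothing, since any three distinct elements realise the example.

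What your analysis actually exposes is a gap in the paper itself. The paper's proof is a one-line claim that compatibility together with $\cB\cap\cD\neq\emptyset$ forces $\cA\subseteq\cC$ or $\cC\subseteq\cA$; that dichotomy holds in the other three patterns but fails in exactly the one you could not close. The lemma can only be rescued in the context where it is used, namely the proof of Lemma~\ref{lem:div-tree-props}(a): there one additionally has $A'\subseteq A$ and $d_\delta=d_{(\cP,\alpha)}$ for a diversity $\delta$, whence $d_\delta(A,A')=\max\left(0,\delta(A\cup A')-\delta(A)-\delta(A')\right)=\max\left(0,-\delta(A')\right)=0$, so no split of $\cP$ can separate $A$ from $A'$; but in the offending pattern the split $\{\cA,\cB\}$ itself would separate $A\in\cA$ from $A'\in\cC\subseteq\cB$, a contradiction. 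So the bad case is excluded there, but only by importing this extra hypothesis; as an isolated statement about compatible partial splits, Lemma~\ref{lem:div2} is false, and no completion of your argument (or of the paper's) can prove it as stated.
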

\begin{proof}
Since $\{\cA,\cB\}$, $\{\cC,\cD\}$ are compatible and $B\in\cB\cap\cD$, by definition, we must have $\cA\subseteq \cC$ or $\cC\subseteq \cA$, which implies the claim.
\end{proof}

\begin{lem}\label{lem:div-tree-props}
Let $\cP$ be a compatible set of partial splits of $\ncP(Y)$, $\{\cA,\cB\}\in\cP$, $\alpha:\cP\to\RR_{> 0}$ and $\delta$ be a diversity on $Y$ such that $d_\delta=d_{(\cP,\alpha)}$. Then we have:
\begin{enumerate}
\item \label{lem:div-tree-props-basic}If $A\in \cA$, $B\in\cB$, $A'\subseteq A$, and $d_\delta(A',B)\geq d_\delta(A,B)$, then $A'\in \cA$.
\item \label{lem:div-tree-props:singleton}If $A\in\cA$, then $\{i\}\in\cA$ for all $i\in A$.
\item \label{lem:div-tree-props:0}If $A,A'\in\cA$, then $d_\delta(A,A')=0$.
\item \label{lem:div-tree-props:add}If $i\not\in A\in\cP(Y)$ with $\{i\},A\in\cA$, then $\{i\}\cup A\in\cA$.
\end{enumerate}
\end{lem}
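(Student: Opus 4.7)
The plan is to prove the four parts in order, with later parts building on earlier ones. The key ingredients are the decomposition $d_\delta(A,B)=\sum_{P\in\cP:\,P\text{ separates }A,B}\alpha(P)$, the four compatibility conditions for partial splits from Section~\ref{sec:splits}, and the fact (from the last lemma of Section~\ref{sec:div-distances}) that $d_\delta(A,A'')=0$ whenever $A\cap A''\neq\emptyset$.

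For part~\ref{lem:div-tree-props-basic} I argue by contradiction, assuming $A'\notin\cA$. Since $A'\subseteq A$ with $A'\neq\emptyset$, the cited lemma gives $d_\delta(A,A')=0$, so no partial split in $\cP$ separates $A$ from $A'$; combined with $A\in\cA$ this rules out $A'\in\cB$ and forces $A'\notin\cA\cup\cB$. Consequently $\{\cA,\cB\}$ contributes $\alpha(\{\cA,\cB\})>0$ to $d_\delta(A,B)$ but nothing to $d_\delta(A',B)$, so the hypothesis $d_\delta(A',B)\geq d_\delta(A,B)$ produces some $\{\cC,\cD\}\in\cP$ that separates $(A',B)$ but not $(A,B)$, say with $A'\in\cC$ and $B\in\cD$. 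A case analysis of the four compatibility conditions between $\{\cA,\cB\}$ and $\{\cC,\cD\}$, using $A\in\cA$, $A'\notin\cA\cup\cB$, $B\in\cB\cap\cD$, $A'\in\cC$, and $\cC\cap\cD=\emptyset$, rules out every possibility: condition~(i) would force $A\in\cC$ (so $\{\cC,\cD\}$ would separate $A,B$), (ii) would force $A'\in\cB$, (iii) would force $A'\in\cA$, and (iv) would force $B\in\cC$.

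Part~\ref{lem:div-tree-props:singleton} is then an application of part~\ref{lem:div-tree-props-basic} with $A'=\{i\}\subseteq A$ and any $B\in\cB$ (which exists since both parts of a partial split are non-empty). The required inequality $d_\delta(\{i\},B)\geq d_\delta(A,B)$ comes from axiom (D1) applied with the triple $(A,\{i\},B)$: because $i\in A$ we have $A\cup\{i\}=A$, and because $\delta(\{i\})=0$ by (D2), the inequality $\delta(A)+\delta(\{i\}\cup B)\geq\delta(A\cup B)$ rearranges to $\delta(\{i\}\cup B)-\delta(B)\geq\delta(A\cup B)-\delta(A)-\delta(B)$, which passes to the max-with-zero quantity $d_\delta$.

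Parts~\ref{lem:div-tree-props:0} and~\ref{lem:div-tree-props:add} continue in the same spirit. For part~\ref{lem:div-tree-props:add}, the intersections $A\cap(\{i\}\cup A)$ and $\{i\}\cap(\{i\}\cup A)$ are both non-empty, so by the same lemma no partial split in $\cP$ separates $\{i\}\cup A$ from either $A$ or $\{i\}$; with $A,\{i\}\in\cA$ this leaves $\{i\}\cup A\in\cA$ or $\{i\}\cup A\notin\cA\cup\cB$, and the second alternative is eliminated by a compatibility case analysis paralleling the one in part~\ref{lem:div-tree-props-basic}, comparing $d_\delta(A,B)$ and $d_\delta(\{i\}\cup A,B)$ for some $B\in\cB$. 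Part~\ref{lem:div-tree-props:0} is the most delicate: assuming some $\{\cC,\cD\}\in\cP$ separates $A,A'\in\cA$ with $A\in\cC$ and $A'\in\cD$, the compatibility conditions immediately collapse to the cases $\cC\subseteq\cA$ or $\cD\subseteq\cA$, and in each case a combination of parts~\ref{lem:div-tree-props-basic} and~\ref{lem:div-tree-props:singleton} applied this time with $\{\cC,\cD\}$ playing the role of the distinguished partial split, together with the singleton closure from~\ref{lem:div-tree-props:singleton}, yields a contradiction. The main obstacle throughout is that $d_\delta$ is not in general a metric, so the arguments must rely on the compatibility structure of $\cP$ rather than on chaining distances via triangle inequalities.
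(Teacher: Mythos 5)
Your treatment of parts (a) and (b) is correct and complete, and your argument for (a) is in fact sounder than the paper's own. The paper deduces (a) from its Lemma~\ref{lem:div2}, whose proof asserts that compatibility together with $B\in\cB\cap\cD$ forces $\cA\subseteq\cC$ or $\cC\subseteq\cA$; this overlooks the compatibility case $\cA\subseteq\cD$, $\cC\subseteq\cB$, in which neither inclusion need hold. Your extra input --- that $A'\subseteq A$ gives $d_\delta(A,A')=0$ by the intersection lemma, hence $A'\notin\cB$ --- is precisely what eliminates that overlooked case, so your four-case analysis closes where the paper's cited lemma does not. Part (b) coincides with the paper's proof.

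The genuine gap is in parts (c) and (d), which you only sketch, and for (c) the gap cannot be repaired: the statement is false as written, so the contradiction you claim in the surviving cases $\cC\subseteq\cA$, $\cB\subseteq\cD$ (and its mirror image) does not exist. Concretely, let $Y=\{1,2,3\}$ and let $\delta$ be the phylogenetic diversity of the star tree with edge lengths $a,b,c>0$, so $\delta(\{i,j\})$ is the sum of the two corresponding edge lengths and $\delta(Y)=a+b+c$. Put $\cP=\{P_1,P_2,P_3\}$ with $P_k=\bigl\{\ncP(\{k\}),\,\ncP(Y\setminus\{k\})\bigr\}$ and $\alpha(P_1)=a$, $\alpha(P_2)=b$, $\alpha(P_3)=c$. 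These partial splits are pairwise compatible (for instance $\ncP(\{1\})\subseteq\ncP(\{1,3\})$ and $\ncP(\{2,3\})\supseteq\ncP(\{2\})$), and a finite check over the seven elements of $\ncP(Y)$ confirms $d_\delta=d_{(\cP,\alpha)}$. Yet $\{2\}$ and $\{3\}$ both lie in the part $\cA=\ncP(\{2,3\})$ of $P_1$ while $d_\delta(\{2\},\{3\})=\delta(\{2,3\})=b+c>0$; here $\cC=\ncP(\{2\})\subseteq\cA$ and $\cB=\ncP(\{1\})\subseteq\cD=\ncP(\{1,3\})$, which is exactly your surviving case. (The paper's own proof of (c) is defective at the same spot: it asserts without argument that a split separating two elements of $\cA$ ``could not be compatible with $\{\cA,\cB\}$''. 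Note the example is a phylogenetic diversity, so it is consistent with Theorem~\ref{thm:phylogenetic-tree}; it is only this lemma that fails.) This also breaks your sketch of (d): to launch the part-(a)-style case analysis there you need $d_\delta(\{i\}\cup A,B)\geq d_\delta(A,B)$, which the paper obtains from (c) via $\delta(A\cup\{i\})=\delta(A)$, i.e.\ from $d_\delta(\{i\},A)=0$ for $\{i\},A\in\cA$ --- exactly the false statement. Granting that inequality your compatibility analysis for (d) would indeed close, but you give no independent justification for it, and with (c) false none is in sight. So parts (c) and (d) remain unproven; (c) needs to be reformulated, not merely reproved, before the argument feeding into Corollary~\ref{cor:div-tree-props} can go through.
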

\begin{proof}
\begin{enumerate}
\item For $A,B\subseteq Y$, set $\cP(A,B)=\smallSetOf{\{\cA,\cB\}\in\cP}{A\in\cA,B\in\cB}$. Then we have
\begin{align*}
d_\delta(A',B)&=d_{(\cP,\alpha)}(A',B)=\sum_{P\in \cP(A',B)} \alpha(P)\\
&\geq\sum_{P\in\cP(A,B)}=d_\delta(A,B)\,.
\end{align*}
Then either $\cP(A,B)=\cP(A',B)$, which trivially implies the claim, or there exists some partial split $\{\cC,\cD\}\in \cP(A',B)\setminus\cP(A,B)$.  Lemma~\ref{lem:div2} now gives us $A'\in\cA$ (since $A\in C$ would imply $\{\cC,\cD\}\in \cP(A,B)$, as desired).
\item Let $B\in\cB$. By (D1), we get
\[
\delta(A\cup\{i\})+\delta(\{i\}\cup B)\geq \delta(A\cup B)\,.
\]
This implies that
\[
\delta(\{i\}\cup B)-\delta(B)\geq \delta(A\cup B)-\delta(A)-\delta(B)\,.
\]
By definition of $d_\delta$ (and (D2)), this is equivalent to $d_\delta(\{i\},B)\geq d_\delta(A,B)$. The claim now follows from Part~\eqref{lem:div-tree-props-basic}.

\item If $d_\delta(A,A')\not=0$ this would imply that there exists a  partial split $\{\cC,\cD\}\in\cP$ with $A\in\cC$ and $A'\in\cD$ (or vice versa). However, this split could not be compatible with $\{\cA,\cB\}$.
\item By Part~\eqref{lem:div-tree-props:0}, we have $d_\delta(\{i\},A)=0$ which implies $\delta(A\cup \{i\})=\delta(A)$ by the definition of $d_\delta$. Let $B\in\cB$. Since $d_\delta(A,B)>0$ by assumption, we get
\begin{align*}
d_\delta(A,B)&=\delta(A\cup B)-\delta(A)-\delta(B)\\
&=\delta(A\cup B)-\delta(A\cup\{i\})-\delta(B)\\
&\leq\delta(A\cup \{i\}\cup B)-\delta(A\cup\{i\})-\delta(B)\\
&=d_\delta(A\cup\{i\},B)\,.
\end{align*}
The claim now follows from Part~\eqref{lem:div-tree-props-basic}.
\end{enumerate}
\end{proof}

\begin{cor}\label{cor:div-tree-props}
Let $\cP$ be a compatible set of partial splits of $\ncP(Y)$, $\alpha:\cP\to\RR_{\geq 0}$ and $\delta$ a diversity on $Y$ such that $d_\delta=d_{(\cP,\alpha)}$. Then for all $P=\{\cA,\cB\}\in\cP$ there exists a partial split $p(P)=\{A,B\}$ of $Y$ such that $\cA=\ncP(A)$ and $\cB=\ncP(B)$. Furthermore the set $\smallSetOf{p(P)}{P\in\cP}$ of partial splits of $Y$ is compatible.
\end{cor}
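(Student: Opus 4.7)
The plan is to construct, for each partial split $P = \{\cA, \cB\} \in \cP$, the candidate sets
\[
A \ =\ \bigcup_{A'\in\cA} A' \qquad\text{and}\qquad B \ =\ \bigcup_{B'\in\cB} B',
\]
and then to verify that $\{A,B\}$ is a partial split of $Y$ with $\cA = \ncP(A)$ and $\cB = \ncP(B)$. The compatibility of the resulting partial splits of $Y$ will then follow because $\ncP(\cdot)$ preserves inclusions.

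First I would check that $A \cap B = \emptyset$. If there existed some $i \in A \cap B$, then by the construction of $A$ and $B$ there would be an $A' \in \cA$ with $i \in A'$ and a $B' \in \cB$ with $i \in B'$. Lemma~\ref{lem:div-tree-props}\eqref{lem:div-tree-props:singleton} would then force $\{i\} \in \cA$ and $\{i\} \in \cB$, contradicting the fact that $\cA \cap \cB = \emptyset$ (which holds since $\{\cA,\cB\}$ is a partial split of $\ncP(Y)$). Moreover $A$ and $B$ are non-empty because $\cA$ and $\cB$ are, so $\{A,B\}$ really is a partial split of $Y$.

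Next I would prove the equality $\cA = \ncP(A)$ (the argument for $\cB = \ncP(B)$ is identical). The inclusion $\cA \subseteq \ncP(A)$ is immediate from the definition of $A$. For the reverse inclusion, pick any non-empty $A' \subseteq A$ and enumerate $A' = \{i_1, \dots, i_k\}$. By Lemma~\ref{lem:div-tree-props}\eqref{lem:div-tree-props:singleton} each singleton $\{i_j\}$ lies in $\cA$, since each $i_j$ belongs to some member of $\cA$. Starting from $\{i_1\} \in \cA$ and invoking Lemma~\ref{lem:div-tree-props}\eqref{lem:div-tree-props:add} iteratively to adjoin $\{i_2\}, \{i_3\}, \dots, \{i_k\}$ one by one, I obtain $\{i_1,\dots,i_k\} = A' \in \cA$. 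This step — building up arbitrary subsets from singletons using the closure property \eqref{lem:div-tree-props:add} — is the one I expect to need the most care, essentially because it is the place where the structure of $\cA$ as a lower set of $\ncP(A)$ really gets used.

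Finally, for compatibility, observe that the map $S \mapsto \ncP(S)$ from subsets of $Y$ to subsets of $\ncP(Y)$ is inclusion-preserving and injective, so $\ncP(A_1) \subseteq \ncP(A_2)$ if and only if $A_1 \subseteq A_2$ and similarly for $\cB$. Translating the four cases of Condition~\eqref{cond:split:compatible} for $\{\cA_1,\cB_1\}$ and $\{\cA_2,\cB_2\}$ through this correspondence gives exactly the four cases of Condition~\eqref{cond:split:compatible} for $\{p(P_1)\} = \{A_1,B_1\}$ and $\{p(P_2)\} = \{A_2,B_2\}$, completing the proof.
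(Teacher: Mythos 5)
Your proof is correct and follows essentially the same route as the paper: the paper also obtains $p(P)$ by iterating Lemma~\ref{lem:div-tree-props}~\eqref{lem:div-tree-props:singleton} and \eqref{lem:div-tree-props:add}, and your explicit construction of $A$ and $B$ as unions, together with the disjointness check via singletons, fills in exactly what the paper leaves implicit. The only cosmetic difference is the compatibility step, where you translate the four cases of Condition~\eqref{cond:split:compatible} directly through the order-embedding $S\mapsto\ncP(S)$, whereas the paper cites Proposition~\ref{prop:partial-splits-comp}; both amount to the same immediate verification.
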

\begin{proof}
The existence of $p(P)$ follows by iteratively applying Lemma \ref{lem:div-tree-props} \eqref{lem:div-tree-props:singleton} and  \eqref{lem:div-tree-props:add}. The compatibility follows from the compatibility of $\cP$ by Proposition~\ref{prop:partial-splits-comp}.
\end{proof}

We can now finish the proof of the main theorem in this section.

\begin{proof}[Proof of Theorem~\ref{thm:phylogenetic-tree}]
We only have to show that \eqref{thm:phylogenetic-tree:divTSm} implies \eqref{thm:phylogenetic-tree:diversity}. So, let $\delta$ be a diversity on $Y$ such that the tight-span $\divTSm{\delta}$ is a tree. Corollary~\ref{cor:ts-diversity-distance} now implies that $T_{d_\delta}$ is a tree and Theorem~\ref{thm:distance-tree} gives us a compatible set $\cP$ of partial splits of $\ncP(Y)$ and a function $\alpha:\cP\to \RR_{>0}$ such that $d_\delta=d_{(\cP,\alpha)}$. By Corollary \ref{cor:div-tree-props}, there exists a compatible set $\cS=\smallSetOf{p(P)}{P\in\cP}$ of partial splits of $Y$ such that $\cA=\ncP(A)$ and $\cB=\ncP(B)$. It remains to show that all $S\in\cS$ are splits.

Suppose one of these partial splits, say $\{A,B\}$, is not a split, and let $i\in A$, $j\in B$ and $k\in Y\setminus (A\cup B)$. By (D1), we get
\[
\delta(\{i,k\})+\delta(\{k,j\})\geq\delta(\{i,j\})\,,
\]
which is equivalent to
\begin{align}\label{eq:div-proof}
d_\delta(\{i\},\{k\})+d_\delta(\{k\},\{j\})\geq d_\delta(\{i\},\{j\})
\end{align}
by the definition of $d_\delta$ and (D2). Now any partial split $\{\ncP(C),\ncP(D)\}$ that separates $k$ from either $i$ or $j$ must also separate $i$ and $j$ since it is compatible to $\{\ncP(A),\ncP(B)\}$. So each split making a contribution to the left of Equation \eqref{eq:div-proof} makes the same contribution to the right of the equation and $\{\ncP(A),\ncP(B)\}$ only contributes to the right, a contradiction.

So $d_\delta=d_{\delta(\cS,\alpha)}$ which implies $\delta=\delta_{(\cS,\alpha)}$. Hence $\delta$ is a phylogenetic diversity.
\end{proof}


\section{Discussion}\label{sec:discussion}

\subsection{$k$"=Dissimilarity Maps}

We have seen how to define the tight-span of various
maps that generalise metrics. Another kind of map that
we could consider taking the tight-span of
is a \emph{$k$"=dissimilarity map} on a set $X$,
that is, a function $D:\binom Xk \to \RR$. In this case,
to obtain a tight-span
one could take the set of vertices of the hypersimplex $\Hypersimplex k X\subseteq \RR^X$
as the corresponding point configuration, that is, the set of
all functions $\sum_{x\in A} e_x$ for all $A\in \binom Xk$. More specifically, motivated by
Proposition \ref{prop:ts-dissimilarity} for
the case $k=2$, given a $k$"=dissimilarity map $D$ on $X$, define the
function $w^D: \Hypersimplex k X\subseteq \RR^X\to \RR$
that sends $\sum_{x\in A} e_x$ to $-D(A)$,
set $P_D=\envelope{w^D}{\Hypersimplex kX}$,
and $T_D=\tightspan{w^D}{\Hypersimplex kX}$.
It follows from Lemma~\ref{lem:minimal} that $T_D$
is the set of minimal elements of $P_D$.

Even though one might expect that $T_D$ has 
similar properties to the tight-spans we have
so far considered, this is not the case.
Indeed, given a weighted tree $T$ with leaf set $X$,
one can define a $k$"=dissimilarity map $D^k_T$ by assigning
to each $k$"=subset $A\subseteq X$ the total length of the induced subtree.
However, the tight-span $T_{D^k_T}$
does not in general also have to be a tree, and
so there is no obvious generalisation of the Tree Metric Theorem.
Even so, the tree $T$ can be reconstructed from $T_{D^k_T}$
\cite[Section~8.1]{herr-moul-10}, and so it could still
be of interest to further study these tight-spans.

\subsection{Coherent decompositions}

Coherent decompositions of metrics were introduced by
Bandelt and Dress \cite{MR1153934} and are intimately
related to tight-spans. Thus the question arises whether
a similar decomposition theory could be
developed for the different generalisations of metrics that
we have considered.

In \cite{MR2502496}, the concept of coherent decompositions
of metrics was generalised to weight functions of
polytopes (as discussed in Section~\ref{sec:point-configurations}).
For directed distances, and also symmetric and non-symmetric
functions, this directly leads to a theory of
coherent decompositions. Moreover, a decomposition
theorem for $k$"=dissimilarities in terms of ``split
$k$"=dissimilarities''  was recently derived
in~\cite{herr-moul-10}, which might be extended if
an appropriate theory was worked out for
tight-spans of  $k$"=dissimilarities, as suggested above.
For diversities, as we have seen, a diversity on a set $Y$ can be
considered as a distance on $\ncP(Y)$,
but such diversities form only a subset of all such distances. So to
develop a theory of coherent decomposition for diversities,
one could maybe try to first answer the following question:
\begin{qst}
Given a diversity $\delta$ how can one compute coherent
decompositions of the distance $d_\delta$? Moreover, which
coherent components  of such a decomposition are of
the form $d_{\delta^\star}$ for some diversity $\delta^\star$?
\end{qst}

\subsection{Infinite Sets and Injective Hulls}

In this paper, we have only considered tight-spans arising from finite sets.
However, many of the results
concerning tight-spans (and not point configurations) can be
translated to infinite sets. For example,
much of the theory for the tight-span of a metric
space was originally developed for arbitrary metric spaces \cite{MR753872,MR82949}, which is
important since the tight-span of a metric (diversity) ---
which is of course an infinite set --- comes
equipped with a canonical metric (diversity), such that the
tight-span of this metric is nothing other
than itself (see \cite{MR82949,BT10}, respectively).
Stated differently, this means that 
tight-spans are injective objects in the appropriate category
\cite{MR82949}, a property that would
be interesting to understand in the setting of
point configurations. However, if the theory
for point configurations is to be extended to
infinite sets, a first crucial step would
be to understand how to generalise splits of polytopes,
which appears to have no obvious generalisation in the infinite setting.\\[5pt]
\noindent{\bf Acknowledgement:} The authors thank the anonymous referees
for their helpful comments. 

\bibliographystyle{amsplain}
\bibliography{tight_spans}

\end{document}